\newtheorem{theorem}{Theorem}[section]
\newtheorem{observation}[theorem]{Observation}
\newtheorem{example}[theorem]{Example}
\newtheorem{problem}[theorem]{Problem}
\newtheorem{lemma}[theorem]{Lemma}
\newtheorem{proposition}[theorem]{Proposition}
\newtheorem{corollary}[theorem]{Corollary}
\newtheorem{definition}[theorem]{Definition}
\newtheorem{remark}[theorem]{Remark}
\newcommand{\tL}{\tilde{\Lambda}}
\newcommand{\La}{{\Lambda}}
\newcommand{\F}{\mathcal F}
\newcommand{\G}{\mathcal G}
\newcommand{\I}{\mathcal I}
\newcommand{\N}{\mathbb N}
\newcommand{\R}{\mathbb R}
\newcommand{\mH}{\mathcal{H}}
\newcommand{\mK}{\mathcal{K}}
\newcommand{\mA}{\mathcal{A}}
\newcommand{\on}{\operatorname}
\newcommand{\ha}{\hat{\;}}
\newcommand{\oN}{\overline{\N}}
\newcommand{\iY}{Y^{(\infty)}}
\title[Zero-dimensional compact metrizable spaces as attractors of GIFSs]{Zero-dimensional compact metrizable spaces as attractors of generalized iterated function systems}
\author[Filip Strobin --- {\L}ukasz Ma\'slnka]{Filip Strobin --- {\L}ukasz Ma\'slanka}
\address
{\textsc{Filip Strobin}\\
Institute of Mathematics\\
Lodz University of Technology\\
\L\'od\'z, POLAND} 
\email{filip.strobin@p.lodz.pl}
\address{\textsc{{\L}ukasz Ma\'slanka}\\
Institute of Mathematics\\
Lodz University of Technology\\
\L\'od\'z, POLAND} 
\email{lukasz\_maslanka@interia.pl}
\subjclass[2010] {Primary: 28A80; Secondary: 37C25, 37C70}
\keywords{iterated function systems; generalized iterated function systems; fractals; generalized fixed points; scattered spaces; Cantor-Bendixson derivative; $0$-dimensional spaces}
\begin{document}

\begin{abstract}%
	In the last years, the problem of considering $0$-dimensional compact metrizable spaces as~attractors of iteration function systems has been undertaken by several authors, for example by~T.~Banakh, E.~Daniello, M.~Nowak and F.~Strobin. In particular, it was proved that such a space $X$ is homeomorphic to the attractor of some IFS iff it is uncountable or it is countable but the {scattered} height of $X$ is~successor ordinal. Also, it was shown that in this case, a space $X$ can be embedded into the real line~$\R$ as {the attractor} of an IFS on $\R$, as well as can be embedded as {a nonattractor of any IFS}.
\newline
Miculescu and Mihail in 2008 introduced the concept of a \emph{generalized iterated function system} (GIFS in~short), a particular extension of the classical IFS. The idea is that, instead of families of selfmaps of a metric space~$X$, GIFSs consist of maps defined on a finite Cartesian {$m$-th power} $X^m$ with values in $X$ (in such a case we say that a GIFS is \emph{of order} $m$). It turned out that a great part of the classical {Hutchinson theory} has natural counterpart in this GIFSs' framework. On the other hand, there are known only few examples of~fractal sets which are generated by GIFSs, but which are not IFSs' {attractors}.
\newline
In the paper we study $0$-dimensional compact metrizable spaces from the perspective of GIFSs' theory. We prove that each such space $X$ (in particular, countable with limit {scattered} height) is homeomorphic to the~attractor of some GIFS on the real line. Moreover, we prove that $X$ can be embedded into the real line $\R$ as {the attractor of some} GIFS of order $m$ and (in the same time) {a nonattractor} of any GIFS of order $m-1$, as well as it can be embedded as {a nonattractor of any GIFS}. {Then} we show that a relatively simple modifications of $X$ deliver spaces whose each connected component is ``big'' and which are GIFS's { attractors} not homeomorphic with IFS's {attractors}. Finally, we use obtained results to show that a generic compact subset of a Hilbert space is not {the} attractor of any Banach  GIFS.
\end{abstract}

\maketitle

\section{Introduction}
The classical Hutchinson theorem (proved by Hutchinson \cite{Hut} and popularized by Barnsley \cite{B}) from early 80s' states that if $X$ is a complete metric space and $\F$ is a finite family of Banach contractions (that is, selfmaps with the Lipschitz constants $\on{Lip}(f)<1$), then there is a unique nonempty and compact $A\subset X$ such that
\begin{equation}\label{filip1}
A=\bigcup_{f\in\F}f(A).
\end{equation}
In this setting, a finite family $\F$ of continuous selfmaps of $X$ is called an \emph{iterated function system} (IFS in short), and a set $A_\F$ satisfying (\ref{filip1}) is called {an \emph{attractor} or a \emph{fractal generated by $\F$}} {(commonly, \emph{fractal sets} are assumed to have nontrivial structure which can be witnessed for example by its fractional dimension; we use more general approach - IFS fractals are all attractors of IFSs)}. It is well known that the thesis of the {Hutchinson} theorem holds under weaker contractive assumptions on $f_1,{\dots},f_n$ (like these due~to~Browder \cite{Br} or Matkowski \cite{Mat}{,} see for example \cite{H}), which, in case of compact space $X$, reduce to {the} Edelstein contractivity (see \cite{E}):
\begin{equation*}
\forall_{x,y\in X,\;x\neq y}\;d(f(x),f(y))<d(x,y).
\end{equation*} We will call an IFS consisting of Banach contractions as a \emph{Banach IFS} and consisting of~weaker types of~contractive maps - as a \emph{weak IFS}. Also, for a given IFS $\F$, we denote $\on{Lip}(\F):=\max\{\on{Lip}(f):f\in\F\}$.\\ 
By a \emph{topological IFS fractal} {or a \emph{topological fractal} for short} (see \cite{K}, \cite{Mi2}; in \cite{K} it is called a {topological} self similar set) we will mean a compact Hausdorff space $X$ such that for some IFS $\F$, $X=\bigcup_{f\in\F}f(X)$ and for every sequence ${\{f_k\}_{k\in\N}}\subset \F$, the set 
$$
\bigcap_{k\in\N}f_{1}\circ{\dots}\circ f_{k}(X)
$$
(called sometimes a \emph{fibre}), is singleton. As was proved in \cite{BKNNS} and \cite{MM3}, $X$ is a~topological IFS fractal iff $X$ is homeomorphic to the attractor of some weak IFS (in particular, it is metrizable). Finally, let us note that topological IFS fractals are attractors of so-called \emph{topologically contracting} IFSs, studied in mentioned papers.\\

In the last years, there has been an effort to detect those sets (especially, subsets of Euclidean spaces) and topological spaces which are IFSs' {fractals} or topological fractals. In particular, {in \cite{N} Magdalena Nowak
proved the following theorem (see Section 3.1 for the definition of the scattered height).}
\begin{theorem}\cite[Theorem 2, Corollary 1]{N}\label{nowak1} Let $X$ be a countable compact metrizable space.\\
(1) The following conditions are equivalent:
\begin{itemize}
\item[(i)] $X$ is topological fractal;
\item[(ii)] $X$ can be embedded into the real line as {the} attractor of a Banach IFS;
\item[(iii)] the {scattered} height of $X$ is successor ordinal.
\end{itemize}
(2) If $X$ is infinite, then it can be embedded into the real line as {a} nonattractor of any weak IFS.
\end{theorem}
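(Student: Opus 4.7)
The plan is to prove the three equivalences cyclically and then handle part (2) separately. The implication (ii) $\Rightarrow$ (i) is immediate: any Banach contraction on a compact metric space is topologically contracting, so the attractor of a Banach IFS automatically satisfies the singleton-fibre condition and is a topological fractal.

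For (iii) $\Rightarrow$ (ii), I would invoke the Mazurkiewicz--Sierpi\'nski classification: every countable compact metrizable space of scattered height $\alpha+1$ is homeomorphic to the ordinal $\omega^{\alpha+1}\cdot n + 1$ for some $n\in\N$. I would then construct an embedding into $\R$ together with a realising Banach IFS by transfinite induction on $\alpha$. The base case $\alpha=0$ embeds $\omega\cdot n+1$ as $n$ disjoint affine copies of $\{2^{-k}\colon k\geq 0\}\cup\{0\}$ and realises it via a ratio-$1/2$ folding Banach contraction together with $n$ constant maps to the limit points. For the inductive step, using that $\omega^{\alpha+1}+1$ is topologically the one-point compactification of a convergent sequence of scaled copies of $\omega^\alpha+1$, I would place geometrically shrinking copies of the previously constructed embedding near a new top point and adjoin to the inherited IFS a folding Banach contraction sending the whole attractor into its first block, plus constants for the newly introduced top-level points.

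For (i) $\Rightarrow$ (iii), I would argue by contradiction. Assume $X$ is a topological fractal via $\F=\{f_1,\dots,f_k\}$ with all fibres singletons, and that the scattered height $\lambda$ is a limit ordinal. Then $X^{(\lambda)}$ is nonempty finite and equals $\bigcap_{\beta<\lambda}X^{(\beta)}$, while $\on{diam}(f_{i_1}\circ\cdots\circ f_{i_n}(X))\to 0$ along every coding. The identity $X^{(1)}=\bigcup_i(f_i(X))^{(1)}$, combined with $(f_i(X))^{(\beta)}\subseteq X^{(\beta)}$, gives $X^{(\lambda)}\subseteq\bigcup_i(f_i(X))^{(\lambda)}$. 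When every $f_i$ is injective, each is a homeomorphism onto its image and must permute the finite set $X^{(\lambda)}$; uniqueness of Edelstein fixed points then forces $|X^{(\lambda)}|=1$, and shrinkage of nested images forces $X$ itself to be a singleton, contradicting that $X$ is infinite. The subtle case of non-injective folding contractions, which can collapse top-level points individually even while their union covers $X^{(\lambda)}$, I would handle by picking $x\in X^{(\lambda)}$, tracking its coding, and using the limit nature of $\lambda$ combined with shrinking diameters of nested images to derive that $x$ must lie in a strictly decreasing CB-rank chain, contradicting $x\in X^{(\lambda)}$.

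For part (2), given infinite countable compact $X$, I would embed $X$ in $\R$ with geometrically tight spacing near one of its limit points, arranging that consecutive points $x_k$ approaching some $p\in X'$ satisfy $|x_{k+1}-p|/|x_k-p|\to 1$. Any weak IFS realising this embedded $X$ as attractor would require some non-constant weak contraction $g$ covering a cofinite tail of $(x_k)$, but a weak (Edelstein-type) contraction strictly shrinks consecutive distances, which is incompatible with the imposed ratio-$1$ spacing. \textbf{The main obstacle} I anticipate is the non-injective case of (i) $\Rightarrow$ (iii): folding contractions can collapse top levels, so purely combinatorial permutation arguments fail, and one needs a genuinely metric argument coupling uniform diameter shrinkage along coding branches with the limit-ordinal structure of the Cantor--Bendixson filtration.
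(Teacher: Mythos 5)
First, a point of comparison that matters here: the paper does not prove this statement at all --- it is quoted from Nowak's work \cite{N} (Theorem 2 and Corollary 1 there) and used as a black box. So there is no internal proof to measure your argument against; it has to stand on its own, and as written it does not.

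There are three genuine gaps. (1) In (iii)$\Rightarrow$(ii) your transfinite induction recurses into the blocks: to realise $\omega^{\alpha+1}+1$ you ``adjoin to the inherited IFS'' for $\omega^{\alpha}+1$. But when $\alpha$ is a limit ordinal (so that $\on{ht}(X)=\alpha+1$ is still a successor, e.g.\ $X\cong\omega^{\omega+1}+1$), the block $\omega^{\alpha}+1$ has limit scattered height and, by the very equivalence you are proving, admits no Banach IFS --- there is nothing to inherit, and the induction collapses exactly at the cases that make the theorem interesting. The working construction (Nowak's, mirrored by the $(\La,b,s)$-machinery of Section 3 of this paper) never assumes the blocks are attractors: it covers the tail of blocks by a single shift-type contraction and covers the first block by a Lipschitz surjection from the \emph{whole} space onto it, in the spirit of Lemma \ref{1816}. (2) In (i)$\Rightarrow$(iii) the non-injective case is the entire content of the implication, and what you offer there is a plan, not an argument: from $x\in X^{(\lambda)}\cap\bigl(f_{i_1}\circ\cdots\circ f_{i_k}(X)\bigr)^{(\lambda)}$ along a branch with shrinking fibres one does \emph{not} get a strictly decreasing CB-rank chain, because for a point of limit rank $\lambda$ every small clopen neighbourhood again has height $\lambda$; an additional idea is genuinely needed. (3) Part (2) is false as argued. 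The condition $|x_{k+1}-p|/|x_k-p|\to 1$ does not obstruct being a weak attractor: the set $\{0\}\cup\{1/k:k\in\N\}$ satisfies it, yet it is the attractor of the weak IFS $\{g,h\}$ on $[0,1]$ with $g\equiv 1$ and $h(x)=x/(1+x)$, since $h$ is an Edelstein contraction realizing the shift $1/k\mapsto 1/(k+1)$ and $h(0)=0$. A weak contraction may shrink every individual distance strictly while the shrinking factors tend to $1$, so ``strict shrinking'' is perfectly compatible with ratio-$1$ spacing; the true non-attractor embeddings require a much more rigid gap structure together with a counting argument over the finitely many maps of a putative IFS.
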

Recently, Banakh, Nowak and Strobin in \cite {BNS2} (see also \cite{BNS} for a bit weaker result) proved that, additionally:
\begin{theorem}\label{bns1}
Each uncountable $0$-dimensional compact metrizable space can be embedded into the real line as {the} attractor of a~Banach IFS that consists of two maps.
\end{theorem}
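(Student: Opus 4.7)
The plan is to exploit the topological structure of $X$ to produce explicitly an embedding into $\R$ together with two Banach contractions whose attractor is the image of $X$. First, I would analyze $X$ via the Cantor--Bendixson decomposition, writing $X = K \cup S$ where $K$ is the perfect kernel and $S = X \setminus K$. Since $X$ is uncountable, $K$ is a Cantor set, while $S$ is countable and scattered. The first target is to establish a ``self-similar'' splitting $X = X_1 \cup X_2$ with $X_1, X_2$ closed, each homeomorphic to $X$, and $X_1 \cap X_2$ either empty or a single controlled point in $K$. When $X$ is itself a Cantor set, this is the classical fact that the Cantor set is homeomorphic to the disjoint union of two Cantor sets. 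In the general case, I would split $K$ into two disjoint clopen Cantor pieces $K_1, K_2$ and carefully distribute the scattered points of $S$ between the two halves so that, on each side, the Cantor--Bendixson invariants (the rank and the accumulation pattern of $S$ on $K$) match those of $X$.

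Given such a decomposition, I would fix some $\alpha < 1/2$ and take the affine Banach contractions $f_1(x) = \alpha x$ and $f_2(x) = \alpha x + (1 - \alpha)$, both of Lipschitz constant $\alpha$. I would then construct an embedding $e \colon X \hookrightarrow [0,1]$ recursively so that at every level of the splitting one has $e(X_i) = f_i(e(X))$. Concretely, each point $x \in X$ is coded by a (possibly terminating) sequence of symbols in $\{1,2\}$ recording which half of the splitting it occupies at each iteration, and $e(x)$ is the limit of the corresponding nested composition of $f_1$ and $f_2$ applied to any base point. By construction $e(X) = f_1(e(X)) \cup f_2(e(X))$, so $e(X)$ is a fixed point of the Hutchinson operator, and its uniqueness for Banach IFS identifies $e(X)$ with the attractor of $\{f_1, f_2\}$.

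The hardest step will be the first one: the desired self-similar splitting $X = X_1 \cup X_2$ with $X_1, X_2 \cong X$ does not always exist as a disjoint union. A telling example is $X = K \cup \{p\}$ with a single isolated point, since any clopen partition produces two sides with different isolated-point counts and so not both homeomorphic to $X$. In such cases, I would allow $X_1 \cap X_2 = \{q\}$ for a carefully chosen $q \in K$, arranging $f_1, f_2$ to agree on $e(q)$; alternatively, I would permit one of $f_1, f_2$ to be a non-injective piecewise-affine Banach contraction that collapses a nontrivial interval to a single image point, thereby producing the isolated structure dynamically. The bulk of the argument will be the case-by-case structural analysis combined with verifying that the resulting Hutchinson fixed point is precisely $e(X)$ and does not acquire extra limit points through iteration.
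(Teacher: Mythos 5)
First, a remark on provenance: the paper does not prove Theorem \ref{bns1} at all --- it quotes it from \cite{BNS2} --- so the honest comparison is with the proof there and with its GIFS analogue inside this paper, namely Proposition \ref{1813} combined with Theorem \ref{scattereddok1}(2). Measured against either, your proposal has a genuine gap at its central step, and your own test case $X=C\cup\{p\}$ ($C$ a Cantor set, $p$ isolated) already exposes it. If $f_1,f_2$ are injective and map $X$ homeomorphically onto pieces $X_1,X_2\cong X$ with $X_1\cup X_2=X$, then $p$ lies in some $X_i$, is isolated there, hence is \emph{the} unique isolated point of $X_i$, hence $f_i(p)=p$. But no injective Banach contraction of $X$ into $X$ can fix an isolated point: taking $x_0\in X\setminus\{p\}$ nearest to $p$, injectivity gives $f_i(x_0)\neq p$, so $|f_i(x_0)-f_i(p)|\geq\on{dist}(p,X\setminus\{p\})=|x_0-p|$, contradicting $\on{Lip}(f_i)<1$. (Your other fallback, letting both maps agree at a common point $q$, is worse: two Banach contractions with a common fixed point have attractor $\{q\}$ by uniqueness.) So at least one map must be non-injective and the isolated/scattered structure must be produced dynamically; that is precisely the entire content of the theorem, and the sketch says nothing about how to do it. The preliminary step is equally unsupported: ``distributing the scattered points so that the Cantor--Bendixson invariants match'' presumes a classification of pairs (perfect kernel, scattered part) analogous to Mazurkiewicz--Sierpi\'nski, which does not exist for uncountable $0$-dimensional compacta; the required self-similar splitting $X=X_1\cup X_2$ with $X_1,X_2\cong X$ simply need not exist. (That the scattered part is a real obstruction, not a technicality, is also visible in Theorem \ref{nowak1}: countable spaces of limit scattered height are not attractors at all.)

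The actual argument inverts your logic. Instead of decomposing $X$, one first builds a rigid self-similar skeleton on the real line --- in the language of this paper, a $(\La_{\max},b,s)$-space containing two distinguished Cantor sets $C_s\subset C_r$ with $C_s$ nowhere dense in $C_r$ --- and then proves, by a Baire-category argument on the complete space of continuous extensions of a fixed homeomorphism of the perfect kernel onto $C_s$, that $X$ embeds as a compact set $M$ with $C_s\subset M\subset C_r$, the kernel landing exactly on $C_s$ and the scattered part injectively into $C_r\setminus C_s$ (this is the Claim in the proof of Proposition \ref{1813}). The contracting maps are then built from the combinatorics of the tree $\La_M$ indexing the skeleton, not from any homeomorphic self-splitting of $X$. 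To repair your proof you would need either to establish the splitting you postulate (false in general) or to replace it with an embedding lemma of this genericity type; as written, the ``case-by-case structural analysis'' deferred to the end is where the whole theorem lives.
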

A bit earlier, D'Aniello and Steele \cite{DS} proved {an} analogous result, but the IFSs they constructed consist of more than two maps.

An interesting generalization of the {Hutchinson theory} of fractals was introduced by Miculescu and Mihail in~2008~-- instead of selfmaps of a metric space, they considered mappings defined on a finite Cartesian {power} of a~given space with values in that space.

Let $m\in\N$ and $(X,d)$ be a metric space. If not stated otherwise, on the Cartesian {$m$-th power} $X^m$ we will consider the maximum metric $d_m$, i.e., 
$$d_m((x_1, {\dots}, x_m), (y_1, {\dots}, y_m)) := \max\{d(x_1, y_1), {\dots}, d(x_m, y_m)\}.$$
We say that a map $f: X^m \to X$ is \emph{ a generalized Banach contraction} if the Lipschitz constant $\on{Lip}(f)<1$, that is, there exists $\alpha < 1$ such that for any $x,y\in X^m$,
$$d(f(x),f(y)) \leq \alpha \cdot d_m(x,y).$$
Miculescu and Mihail in \cite{MM1} and \cite{M} proved that if $X$ is complete, $m\in\N$ and $\F$ is a finite family of  generalized Banach contractions $f:X^m\to X$, then there is a unique nonempty and compact $A\subset X$ such that
\begin{equation}\label{filip2}
A=\bigcup_{f\in\F}f(A\times{\dots}\times A).
\end{equation}
In this setting, a family $\F$ of continuous maps $f:X^m\to X$ is called a \emph{generalized iterated function system of order $m$} (GIFS in short), and a unique nonempty and compact set $A_\F$ satisfying (\ref{filip2}) is called {the \emph{attractor} or the \emph{fractal generated by $\F$}}.\\
After the papers of Miculescu and Mihail, other aspects of the theory of GIFSs and their fractals were considered, especially by them, Secelean and Strobin{,} see for example papers \cite{MM1}, \cite{MM2},  \cite{Se}, \cite{SS}, \cite{SS2}, \cite{S} and references therein.
In particular, it was proved that GIFSs consisting of weaker contractive type mappings also generates a unique fractal sets (see \cite{SS}; also cf. \cite{MM1}). Again, for compact $X$, these conditions reduce to the following:
\begin{equation*}
\forall_{x,y\in X^m,\;x\neq y}\;d(f(x),f(y))<d_m(x,y).
\end{equation*}
If a GIFS consists of generalized Banach contractions, then we call it a \emph{Banach GIFS}, and if it consists of weaker type of generalized contractions - we call it a~\emph{weak GIFS}. Similarly, for a GIFS $\F$, we set $\on{Lip}(\F):=\max\{\on{Lip}(f):f\in\F\}$. \\
One of the problems considering GIFSs is the following:
\begin{center}Is the class of GIFSs' {attractors} essentially wider than the class of IFSs' {attractors}?\end{center}
and, related to it,
\begin{center}Which sets/spaces are { attractors of} some GIFS?\end{center} 
Several interesting examples were given. In \cite{MM2} it was observed that the Hilbert cube $I~:=~\prod_{k=1}^\infty\left[0,\frac{1}{2^k}\right]$ is~generated by a Banach GIFS $\F=\{f,g\}$ of order $2$ defined by
$$
f((x_k),(y_k)):=\left(\tfrac{1}{2}x_1,{\tfrac{1}{2}}y_1,{\tfrac{1}{2}}y_2,{\tfrac{1}{2}}y_3,{\dots}\right)$$ and $$g((x_k),(y_k)):=\left({\tfrac{1}{2}}x_1+{\tfrac{1}{2}},{\tfrac{1}{2}}y_1,{\tfrac{1}{2}}y_2,{\tfrac{1}{2}}y_3,{\dots}\right).
$$
On the other hand, it cannot be generated by any Banach IFS, as it~has infinite dimension. However, to our best knowledge, it is not known whether $I$ is {the attractor of a weak IFS or a topological fractal}.\\
In \cite{S}, for each $m\geq 2$, there is constructed a Cantor subset $C(m)$ of the plane, which is~generated by~some Banach GIFS on the plane of order $m$, but is not generated by any weak GIFS of order $m-1$. Also, there is constructed a Cantor set $C$ which is not {the} attractor of any weak GIFS. On the other hand, each such $C(m)$ and $C$ are homeomorphic to the Cantor ternary set, hence they are homeomorphic to the attractor of a Banach IFS (in particular, they are topological fractals).\\

The aim of this paper is to study $0$-dimensional compact  metrizable spaces from the perspective of GIFSs' theory. It is organized as follows.\\
In the first part of the next section we show some simple, but useful observations concerning GIFSs and we also prove that certain quotients of topological fractals are also topological fractals.\\
In Section 3 we construct a wide class of metric spaces, \emph{$(\La,b)$-spaces}, which will be key in the proofs of main results. Also, we prove some basic properties of these spaces.\\
In Section 4 we prove that each compact $0$-dimensional metrizable space $X$ can be embedded into the real line as the attractor of a Banach GIFS of order $2$. In particular, if $X$ is countable and its {scattered} height is limit ordinal, then we obtain {the} attractor of a Banach GIFS which is not a topological fractal.\\
In the main result of Section 5 we prove that for any $m\geq 2$, each infinite compact metrizable $0$-dimensional space~$X$ can be embedded into the real line as the attractor of a Banach GIFS of order $m$ and {a nonattractor} of any~weak GIFS of order $m-1$, and also $X$ can be embedded into the real line as {a nonattractor} of any~weak~GIFS. This extends both mentioned results from \cite{S} and part (2) of Theorem \ref{nowak1}.\\
In Section 6 we show that, replacing points in certain countable compact spaces by appropriately ``big" sets, we obtain next examples of Banach GIFSs' {fractals}  which are not topological fractals.\\
Finally, in Section 7 we use earlier machinery to prove that a generic compact subset of a Hilbert space is not {the} attractor of any Banach GIFS.

\section{Basic definitions, observations and auxiliary constructions}
\subsection{Remarks on IFSs and GIFSs}
Here we make some simple observations which we will use later. The first one is obvious and we skip the proof.
\begin{lemma}\label{nl1}
Let $X$ be a metric space, $m_1,{\dots},m_n\in\N$ and for $i=1,{\dots},n$, $f_i:X^{m_i}\to X$ be a generalized Banach contraction. Take $m\geq \max\{m_1,{\dots},m_n\}$ and for each $i=1,{\dots},n$, define $\tilde{f}_i:X^m\to X$ by
$\tilde{f}_i(x_1,{\dots},x_m):=f_i(x_1,{\dots},x_{m_i})$,
and set $\tilde{\F}:=\{\tilde{f}_1,{\dots},\tilde{f}_n\}$. Then:\\
(1) $\tilde{\F}$ is a Banach GIFS of order $m$ and $\on{Lip}(\tilde{\F})=\max\{\on{Lip}(f_1),{\dots},\on{Lip}(f_n)\}$.\\
(2) If $A\subset X$ is nonempty and compact, and $A=\bigcup_{i=1}^nf_i(A^{m_i})$, then $A$ is the attractor of $\tilde{\F}$.
\end{lemma}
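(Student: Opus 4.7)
The plan is to handle (1) and (2) separately, since they address different aspects: (1) concerns the Lipschitz behaviour of the extended maps, and (2) concerns how extending arguments by ``dummy'' coordinates interacts with taking the union of $f_i(A^{m_i})$.

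For part (1), I would first show $\on{Lip}(\tilde{f}_i)\leq \on{Lip}(f_i)$ for each $i$ by a direct calculation. Given $x=(x_1,\dots,x_m)$ and $y=(y_1,\dots,y_m)$ in $X^m$, the point is that
\[
d(\tilde f_i(x),\tilde f_i(y))=d(f_i(x_1,\dots,x_{m_i}),f_i(y_1,\dots,y_{m_i}))\leq \on{Lip}(f_i)\cdot d_{m_i}((x_1,\dots,x_{m_i}),(y_1,\dots,y_{m_i})),
\]
and the $d_{m_i}$-distance on the first $m_i$ coordinates is bounded above by $d_m(x,y)$ since the latter takes the max over more coordinates. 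For the reverse inequality $\on{Lip}(\tilde f_i)\geq \on{Lip}(f_i)$, I would take any two tuples in $X^{m_i}$ and pad them identically (e.g.\ with a common fixed element of $X$ in the coordinates from $m_i+1$ to $m$); since the padded coordinates contribute $0$ to the $d_m$-distance, the ratio on the extended pair equals the ratio for the original pair, so passing to the supremum gives equality. Combining over $i$ yields the claim on $\on{Lip}(\tilde\F)$, and in particular each $\on{Lip}(\tilde f_i)<1$ so $\tilde\F$ is a Banach GIFS of order $m$.

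For part (2), the key observation I would record is the set-theoretic identity $\tilde f_i(A^m)=f_i(A^{m_i})$. Indeed, an element of $\tilde f_i(A^m)$ has the form $f_i(x_1,\dots,x_{m_i})$ where $(x_1,\dots,x_m)\in A^m$, and conversely any $(x_1,\dots,x_{m_i})\in A^{m_i}$ can be extended arbitrarily (using that $A\neq\emptyset$) to an element of $A^m$ whose image under $\tilde f_i$ is $f_i(x_1,\dots,x_{m_i})$. Taking the union over $i$ then yields $\bigcup_{i=1}^n \tilde f_i(A^m)=\bigcup_{i=1}^n f_i(A^{m_i})=A$, so $A$ is a nonempty compact solution of the GIFS equation (\ref{filip2}) for $\tilde\F$; by the Miculescu--Mihail uniqueness quoted in the introduction, $A$ is the attractor of $\tilde\F$.

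No real obstacle is expected; the only subtlety is remembering that $A\neq\emptyset$ is needed to pad arguments and that the maximum metric gives equality (not just $\leq$) of Lipschitz constants, which relies on being free to fix the extra coordinates at a common value.
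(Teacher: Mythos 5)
Your proof is correct; the paper explicitly skips the proof of this lemma as obvious, and your argument (padding with dummy coordinates to get $\on{Lip}(\tilde f_i)=\on{Lip}(f_i)$, the identity $\tilde f_i(A^m)=f_i(A^{m_i})$ using $A\neq\emptyset$, and then uniqueness of the attractor) is exactly the standard verification the authors had in mind.
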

The second lemma shows that in a certain cases, we can extend maps from a given GIFS to some wider space.
\begin{lemma}\label{nl2}
Assume that $H$ is a Hilbert space, $X\subset H$ is nonempty and compact, and $\F$ is a Banach GIFS on $X$ of order $m$ with $\on{Lip}(\F)<\frac{1}{\sqrt{m}}$ so that $X$ is its attractor. Then for every $f\in\F$, there is its extension $\tilde{f}:H^m\to H$ such that $\tilde{\F}:=\{\tilde{f}:f\in\F\}$ is a Banach GIFS with $\on{Lip}(\F)\leq \sqrt{m}\on{Lip}(\F)<1$, whose attractor is $X$.
\end{lemma}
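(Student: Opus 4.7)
The plan is to extend each $f\in\F$ using Kirszbraun's theorem, after switching the maximum metric on $H^m$ for the Hilbert norm $\|\cdot\|_2$ induced by the orthogonal direct sum $H\oplus\cdots\oplus H$. The geometric starting point is the elementary comparison
$$d_m(x,y)\;\leq\;\|x-y\|_2\;\leq\;\sqrt{m}\,d_m(x,y)\quad\text{for all }x,y\in H^m.$$

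Fix $f\in\F$ and write $L:=\on{Lip}(f)\leq\on{Lip}(\F)<1/\sqrt{m}$. Regarding $f$ as a map from $X^m\subset H^m$ into $H$, the left-hand inequality gives
$$\|f(x)-f(y)\|_H\;\leq\;L\,d_m(x,y)\;\leq\;L\,\|x-y\|_2,$$
so $f$ is $L$-Lipschitz from the Hilbert space $(H^m,\|\cdot\|_2)$ into the Hilbert space $H$. Kirszbraun's extension theorem therefore produces $\tilde f\colon H^m\to H$ with $\tilde f|_{X^m}=f$ and $\|\cdot\|_2$-Lipschitz constant at most $L$. Applying the right-hand inequality yields
$$\|\tilde f(x)-\tilde f(y)\|_H\;\leq\;L\,\|x-y\|_2\;\leq\;\sqrt{m}\,L\,d_m(x,y),$$
so $\on{Lip}(\tilde f)\leq \sqrt{m}\,L$ with respect to $d_m$. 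Taking the maximum over $f\in\F$ gives $\on{Lip}(\tilde\F)\leq\sqrt{m}\,\on{Lip}(\F)<1$, and hence $\tilde\F$ is a Banach GIFS of order $m$ on $H$.

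To identify the attractor of $\tilde\F$, note that since $\tilde f|_{X^m}=f$ for every $f\in\F$, the nonempty compact set $X$ satisfies
$$X\;=\;\bigcup_{f\in\F}f(X^m)\;=\;\bigcup_{\tilde f\in\tilde\F}\tilde f(X^m),$$
so by the uniqueness part of the Miculescu--Mihail theorem, applied to $\tilde\F$ on the complete space $H$, $X$ is the attractor of $\tilde\F$. The only nonroutine ingredient is Kirszbraun's theorem, which is available precisely because $(H^m,\|\cdot\|_2)$ and $H$ are Hilbert; the hypothesis $\on{Lip}(\F)<1/\sqrt{m}$ is calibrated exactly to absorb the $\sqrt{m}$ factor incurred when passing back from $\|\cdot\|_2$ to $d_m$, keeping $\tilde\F$ contractive.
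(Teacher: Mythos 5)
Your proposal is correct and follows essentially the same route as the paper: both pass to the $\ell_2$ metric on $H^m$ via the comparison $d_m\leq\rho_m\leq\sqrt{m}\,d_m$, apply the Kirszbraun--Valentine theorem to extend each map without increasing the $\rho_m$-Lipschitz constant, and convert back to $d_m$ to absorb the factor $\sqrt{m}$. The paper leaves the identification of the attractor as ``obvious,'' which you spell out via uniqueness; that is the only difference.
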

\begin{proof}Take $f\in\F$, and 
denote by $\rho_m$ the $\ell_2$ metric on $H^m$, that is, $\rho_m((x_1,{\dots},x_m),(y_1,{\dots},y_m)):=\sqrt{d(x_1,y_1)^2+{\dots}+d(x_m,y_m)^2}$. Clearly, \begin{equation}\label{vn1}d_m\leq \rho_m\leq \sqrt{m}d_m\end{equation}
Hence $\on{Lip}_{\rho_m}(f)\leq \on{Lip}(f)$, and since $(H^m,\rho_m)$ is a Hilbert space (more precisely, the metric $\rho_m$ is~generated by appropriate norm), using the Kirszbraun-Valentine theorem (see \cite[Theorem 1.12]{L}), we can extend the map $f$ to~the~map $\tilde{f}:H^m\to H$ so that $\on{Lip}_{\rho_m}(\tilde{f})=\on{Lip}_{\rho_m}(f)\leq \on{Lip}(f)$. Then, again by (\ref{vn1}), $\on{Lip}(\tilde{f})\leq \sqrt{m}\on{Lip}(f)$. Hence the~GIFS $\tilde{\F}:=\{\tilde{f}:f\in\F\}$ satisfies $\on{Lip}(\tilde{\F})\leq \sqrt{m}\on{Lip}(\F)$. The second part of the thesis is obvious.
\end{proof}
\begin{remark}\label{rem:ell}{
Let us remark that similar result holds for $\ell^\infty$ space { - if $K\subset \ell^\infty$, then any map $f:K^m\to\ell^\infty$ with $\on{Lip}(f)<\infty$ can be extended to a~map $\tilde{f}:(\ell^\infty)^m\to\ell^\infty$ with $\on{Lip}(\tilde{f})=\on{Lip}(f)$ (see \cite[Lemma 1.1(ii)]{L})}}.
\end{remark} 

The next lemma shows that if two GIFS's {fractals} are appropriately separated, then their union is also a GIFS {fractal}.
\begin{lemma}\label{filipnowy9}
Assume that $X$ is a compact metric space of the form $X=X_1\cup{\dots}\cup X_n$, such that
\begin{itemize}
\item[(a)] $X_1,{\dots},X_n$ are attractors of some Banach GIFSs of order $m$, $\F_1,{\dots},\F_n$, respectively;
\item[(b)] there are projections $\pi_i:X\to X_i$, $i=1,{\dots},n$, such that $\max\{\on{Lip}(\pi_i)\cdot\on{Lip}(\F_i):i=1,{\dots},n\}~<~1$.
\end{itemize}
Then $X$ is the attractor of some Banach GIFS $\mH$ of order $m$ with $\on{Lip}(\mH)\leq\max\{\on{Lip}(\pi_i)\cdot\on{Lip}(\F_i):i=1,{\dots},n\}$, and which consists of certain extensions of maps from $\F_1\cup{\dots}\cup\F_n$.
\end{lemma}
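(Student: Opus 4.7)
The plan is to use the most natural extension: for each $i\in\{1,\dots,n\}$ and each $f\in\F_i$, define $\tilde{f}:X^m\to X$ by
$$\tilde{f}(x_1,\dots,x_m):=f(\pi_i(x_1),\dots,\pi_i(x_m)),$$
and set $\mH:=\{\tilde{f}:f\in\F_i,\ i=1,\dots,n\}$. Since $f$ already mapped into $X_i\subseteq X$, the codomain is fine, and $\tilde{f}$ is an extension of $f$ from $X_i^m$ to $X^m$ provided $\pi_i$ restricts to the identity on $X_i$ (which is what the word ``projection'' here should mean; I will read it as: each $\pi_i$ is a retraction of $X$ onto $X_i$).

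The first step is the Lipschitz estimate. For $x=(x_1,\dots,x_m)$ and $y=(y_1,\dots,y_m)$ in $X^m$, a direct chain gives
$$d(\tilde{f}(x),\tilde{f}(y))\le\on{Lip}(f)\cdot \max_{j}d(\pi_i(x_j),\pi_i(y_j))\le \on{Lip}(f)\cdot \on{Lip}(\pi_i)\cdot d_m(x,y),$$
so $\on{Lip}(\tilde{f})\le\on{Lip}(\F_i)\cdot\on{Lip}(\pi_i)$, and taking the maximum over $i$ and $f\in\F_i$ yields $\on{Lip}(\mH)\le\max_i\on{Lip}(\pi_i)\cdot\on{Lip}(\F_i)<1$. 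In particular, $\mH$ is a Banach GIFS of order $m$, so by the Miculescu--Mihail theorem quoted in the introduction it has a unique compact attractor.

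The second step is to verify the Hutchinson identity $X=\bigcup_{\tilde{f}\in\mH}\tilde{f}(X^m)$, from which uniqueness of the attractor will finish the proof. Because $\pi_i$ is a retraction onto $X_i$, one has $\pi_i(X)=X_i$, and consequently
$$\bigcup_{f\in\F_i}\tilde{f}(X^m)=\bigcup_{f\in\F_i}f\bigl(\pi_i(X)^m\bigr)=\bigcup_{f\in\F_i}f(X_i^m)=X_i,$$
using in the last equality the assumption that $X_i$ is the attractor of $\F_i$. Taking the union over $i=1,\dots,n$ gives exactly $X$. Conversely, each $\tilde{f}(X^m)\subseteq X_i\subseteq X$, so both inclusions hold.

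The argument is almost entirely formal once one writes down the right extension; the only conceptual point is recognising that composing with the retractions $\pi_i$ is what converts the original GIFS on the smaller piece $X_i$ into a genuine map on $X^m$ without losing contractivity. The ``hard'' (in the sense of only nontrivial) step is therefore the first one: ensuring that the composition with $\pi_i$ does not inflate the Lipschitz constant beyond $\on{Lip}(\pi_i)\cdot\on{Lip}(\F_i)$, which is precisely where hypothesis (b) is used to keep the resulting GIFS Banach-contractive.
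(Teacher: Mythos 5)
Your proposal is correct and coincides with the paper's own proof: the paper uses exactly the extension $\tilde{f}(x_1,\dots,x_m):=f(\pi_i(x_1),\dots,\pi_i(x_m))$ and the same Lipschitz estimate $\on{Lip}(\tilde{f})\leq\on{Lip}(\pi_i)\cdot\on{Lip}(f)$, merely stating the verification of the Hutchinson identity implicitly where you spell it out. Your reading of ``projection'' as a retraction onto $X_i$ is also the intended one, as confirmed by the construction in Remark \ref{nr1}.
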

\begin{proof}
Take $f\in\F_i$, and consider the map $\tilde{f}:X^m\to X$ by
$
\tilde{f}(x_1,{\dots},x_m):=f(\pi_i(x_1),{\dots},\pi_i(x_m)).
$
Then, obviously, $\tilde{f}$ is an extension of $f$ and $\on{Lip}(\tilde{f})\leq \on{Lip}(\pi_i)\cdot\on{Lip}(f)$. Then it is enough to take $\mH=\{\tilde{f}:f\in\F_1\cup{\dots}\cup\F_n\}$.
\end{proof}

\begin{remark}\label{nr1}{Observe that condition (b) holds if
\begin{itemize}
\item[(c)] $\lambda:=\frac{\max\{\on{diam}(X_i):i=1,{\dots},n\}}{\min\{\on{dist}(X_i,X_j):i,j=1,{\dots},n,\;i\neq j\}}<\frac{1}{\max\{\on{Lip}(\F_i):i=1,{\dots},n\}}$.
\end{itemize}
Indeed, let the projection maps $\pi_i$ be given by $\pi_i(x):=\left\{\begin{array}{ccc}x,&\mbox{if } x\in X_i\\\tilde{x}&\mbox{otherwise}\end{array}\right.$, where $\tilde{x}$ is an initially chosen point of $X_i$. Then for $x\in X_i$ and $y\in X_{j}$, where $i\neq j$, we have
$$
d(\pi_i(x),\pi_i(y))\leq \on{diam}(X_i)\leq\lambda\on{dist}(X_i,X_j)\leq \lambda d(x,y),
$$
so $\on{Lip}(\pi_i)\leq\max\{1,\lambda\}<\frac{1}{\max\{\on{Lip}(\F_i):i=1,{\dots},n\}}$ and (b) is satisfied.
}\end{remark}

The following lemma is an extension of \cite[Lemma 1]{N}.
\begin{lemma}\label{nl3}
Assume that $n\in\N$ and a metric space $X$ is of the form $X=X_0\cup{\dots}\cup X_n$, where: 
\begin{itemize}
\item[(i)] each $X_i$ is isometric copy of $X_0$;
\item[(ii)] $\on{dist}(X_i,X_j)\geq \on{diam}(X_0)$ for distinct $i,j$.
\end{itemize}
If $X$ is the attractor of a weak GIFS of order $m$, then $X_0$ is the attractor of a~weak GIFS of order $m$.
\end{lemma}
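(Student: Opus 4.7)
The plan is to mimic the classical IFS argument from \cite[Lemma 1]{N}: show that the separation of the pieces $X_0,\dots,X_n$ forces each map in the GIFS to send each ``product piece'' $X_{i_1}\times\dots\times X_{i_m}$ entirely into one of the $X_j$'s, and then pull everything back to $X_0$ via the given isometries. Let $\F$ be the weak GIFS of order $m$ on $X$ with attractor $X$, and for each $i\in\{0,\dots,n\}$ fix an isometry $\varphi_i\colon X_0\to X_i$ (with $\varphi_0=\mathrm{id}$).

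The core observation, which is the only place contractivity is used, is the following. Fix $f\in\F$ and indices $i_1,\dots,i_m\in\{0,\dots,n\}$, and suppose toward a contradiction that there exist $y,y'\in X_{i_1}\times\dots\times X_{i_m}$ with $f(y)\in X_j$, $f(y')\in X_{j'}$ for distinct $j,j'$. Then $y\neq y'$, while on one hand $d_m(y,y')\leq\on{diam}(X_0)$ because the $k$-th coordinates of $y,y'$ both lie in $X_{i_k}$ (an isometric copy of $X_0$), and on the other hand $d(f(y),f(y'))\geq \on{dist}(X_j,X_{j'})\geq \on{diam}(X_0)$ by hypothesis (ii). This gives $d(f(y),f(y'))\geq d_m(y,y')$, contradicting the weak contractivity of $f$. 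Hence there is a well-defined index $\sigma_f(i_1,\dots,i_m)\in\{0,\dots,n\}$ with $f(X_{i_1}\times\dots\times X_{i_m})\subset X_{\sigma_f(i_1,\dots,i_m)}$.

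With this in hand, for each pair $(f,(i_1,\dots,i_m))$ define
$$
\tilde f_{(i_1,\dots,i_m)}\colon X_0^m\to X_0,\qquad \tilde f_{(i_1,\dots,i_m)}(x_1,\dots,x_m):=\varphi_{\sigma_f(i_1,\dots,i_m)}^{-1}\bigl(f(\varphi_{i_1}(x_1),\dots,\varphi_{i_m}(x_m))\bigr).
$$
Because the $\varphi_i$'s are isometries, each $\tilde f_{(i_1,\dots,i_m)}$ is again a weak contraction (with the same modulus as $f$), so the finite family
$$
\F':=\bigl\{\tilde f_{(i_1,\dots,i_m)}:\ f\in\F,\ (i_1,\dots,i_m)\in\{0,\dots,n\}^m,\ \sigma_f(i_1,\dots,i_m)=0\bigr\}
$$
is a weak GIFS on $X_0$ of order $m$.

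It remains to check that $X_0$ is the Hutchinson invariant set of $\F'$, which then forces $X_0$ to be its attractor (weak GIFSs admit a unique nonempty compact invariant set). Using $X=\bigcup_{f\in\F}f(X^m)$ together with the decomposition $X^m=\bigcup_{(i_1,\dots,i_m)}X_{i_1}\times\dots\times X_{i_m}$ and the inclusion just established, we obtain
$$
X_k\;=\;\bigcup_{\substack{f\in\F,\ (i_1,\dots,i_m)\\\sigma_f(i_1,\dots,i_m)=k}} f(X_{i_1}\times\dots\times X_{i_m})
$$
for every $k$; in particular for $k=0$, applying $\varphi_0^{-1}$ and using $X_{i_\ell}=\varphi_{i_\ell}(X_0)$ turns the right-hand side into $\bigcup_{\tilde f\in\F'}\tilde f(X_0^m)$, as required. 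The main (and essentially only) obstacle is the separation argument in the first paragraph; once it is clear that each $f$ respects the partition into pieces, the rest is bookkeeping via the isometries $\varphi_i$.
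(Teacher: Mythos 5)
Your proof is correct and follows essentially the same route as the paper's: both decompose $X^m$ into the $(n+1)^m$ product blocks $X_{i_1}\times\dots\times X_{i_m}$, transfer to $X_0^m$ via the isometries, and exploit the very same chain of inequalities $\on{diam}(X_0)\leq\on{dist}(X_i,X_j)\leq d(f(\cdot),f(\cdot))<d_m(\cdot,\cdot)$ coming from (ii) and weak contractivity. The only (cosmetic) difference is that you use that inequality to rule out a priori that a block's image meets two pieces, while the paper instead redirects any value falling outside $X_0$ to a fixed point $z_0$ and absorbs that case into the contractivity estimate for the modified maps.
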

\begin{proof}
For $i=0,1,{\dots},n$, let $t_i:X_0\to X_i$ be an isometry. Suppose that $\F$ is a GIFS of order $m$ such that $X=\bigcup_{f\in\F}f(X^m)$. For every $f\in\F$ and $\mathbf{i}=(i_1,{\dots},i_m)\in\{0,{\dots},n\}^m$, let $H_{f,\mathbf{i}}:X_0^m\to X_0$ be defined by
$$
H_{f,\mathbf{i}}(x_1,{\dots},x_m)=\left\{\begin{array}{ccc}f(t_{i_1}(x_1),{\dots},t_{i_m}(x_m))&\mbox{if}&f(t_{i_1}(x_1),{\dots},t_{i_m}(x_m))\in X_0\\
z_0&\mbox{if}&f(t_{i_1}(x_1),{\dots},t_{i_m}(x_m))\notin X_0\end{array}\right.
$$
where $z_0$ is an initially chosen point of $X_0$.
Clearly,
$
\bigcup_{f\in\F,\;\mathbf{i}\in\{0,{\dots},n\}^m}H_{f,\mathbf{i}}(X_0^m)=X_0
$. We will show that each $\on{Lip}(H_{f,\mathbf{i}})\leq\on{Lip}(f)$. Take $x=(x_1,{\dots},x_m),\;y=(y_1,{\dots},y_m)\in X_0^m$ such that $H_{f,i}(x)\neq H_{f,i}(y)$.
\newline 
If $f(t_{i_1}(x_1),{\dots},t_{i_m}(x_m)),f(t_{i_1}(y_1),{\dots},t_{i_m}(y_m))$ belong to distinct $X_i,X_j$, then by assumption (ii), we have
$$
d(H_{f,\mathbf{i}}(x),H_{f,\mathbf{i}}(y))\leq\on{diam}(X_0)\leq$$ $$\leq \on{dist}(X_i,X_j)\leq d\big(f(t_{i_1}(x_1),{\dots},t_{i_m}(x_m)),f(t_{i_1}(y_1),{\dots},t_{i_m}(y_m))\big)<$$ $$<d_m\big((t_{i_1}(x_1),{\dots},t_{i_m}(x_m)),(t_{i_1}(y_1),{\dots},t_{i_m}(y_m))\big)=d_m(x,y).
$$
If $f(t_{i_1}(x_1),{\dots},t_{i_m}(x_m)),f(t_{i_1}(y_1),{\dots},t_{i_m}(y_m))\in X_0$ then
$$
d(H_{f,\mathbf{i}}(x),H_{f,\mathbf{i}}(y))=d\big(f(t_{i_1}(x_1),{\dots},t_{i_m}(x_m)),f(t_{i_1}(y_1),{\dots},t_{i_m}(y_m))\big)<$$ $$<d_m\big((t_{i_1}(x_1),{\dots},t_{i_m}(x_m)),(t_{i_1}(y_1),{\dots},t_{i_m}(y_m))\big)=d_m(x,y).
$$
In {remaining cases} we proceed in a trivial way.
\end{proof}

\subsection{Quotients of topological fractals}
We first show that if $X$ is a~topological fractal, then also its certain quotient space is a topological fractal (in fact, this result was proved during preparations of the paper \cite{BKNNS}, but it was not published). We start with recalling some basic facts about quotient spaces (see \cite{En}). Let $X$ be a topological space and $R$ be an equivalence relation on $X$. By $[x]$ we denote the equivalence class containing $x$, by $X/R$, the set of all equivalence classes, and by $\pi:X\to X/R$, the map $\pi(x):=[x]$. If $\tau$ is the topology on $X$, then the quotient topology on $X/R$ is defined by
$
\tau_R:=\{V\subset X/R:\pi^{-1}(V)\in\tau\}=\{V\subset X/R:\bigcup V\in\tau\}
$ 
that is, $\tau_R$ is the richest topology so that $\pi$ is continuous.\\ 
If $x\in X$, then by $[x]_c$ let us denote the connected component containing $x$, that is, the union of all connected subsets of $X$ which contains $x$. It is well known (again, see \cite{En}) that two different connected components are disjoint, hence the relation $R_c$ defined by $x R_c y$ iff $x,y$ belong to the same connected component, is equivalence relation. Moreover, each connected component is connected and closed. 
\begin{theorem}\label{quotient}
If $X$ is {the} topological fractal for an IFS $\F$, then $X/R_c$ is {the} topological fractal for the IFS $\tilde{\F}=\{\tilde{f}:f\in\F\}$, where for each $f\in\F$ and $x\in X$,
$
\tilde{f}([x]_c):=[f(x)]_c
$.
\end{theorem}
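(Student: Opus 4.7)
The plan is to verify that $\tilde{\F}$ is a bona fide IFS on $X/R_c$---that is, each $\tilde{f}$ is a well-defined continuous selfmap and the quotient is a compact Hausdorff space---and then to check the two defining clauses of a topological fractal: the covering condition and the singleton-fibre condition.

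First, $\tilde{f}$ is well-defined because the continuous image of a connected set is connected: if $[x]_c=[y]_c$ then $x,y$ lie in a common connected set $C$, so $f(x),f(y)\in f(C)$ with $f(C)$ connected, hence $[f(x)]_c=[f(y)]_c$. By construction one has the key commutation identity
$$\tilde{f}\circ\pi = \pi\circ f,$$
and the universal property of the quotient topology then promotes continuity of $\pi\circ f$ to continuity of $\tilde{f}$. Compactness of $X/R_c$ follows from continuity and surjectivity of $\pi$. For Hausdorffness, $X$ is metrizable (being a topological fractal, cf.\ \cite{BKNNS} and \cite{MM3}), hence compact metric; in such a space connected components coincide with quasi-components, so two distinct components can be separated by a clopen subset of $X$, which forces $R_c$ to be closed in $X\times X$. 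A quotient of a compact Hausdorff space by a closed equivalence relation is Hausdorff, so $X/R_c$ is compact Hausdorff.

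The covering condition follows by applying $\pi$ to $X=\bigcup_{f\in\F}f(X)$ and combining surjectivity of $\pi$ with $\pi\circ f=\tilde{f}\circ\pi$. For the fibre condition, a straightforward induction on $k$ based on the same commutation gives
$$\tilde{f}_1\circ\dots\circ\tilde{f}_k(X/R_c)=\pi\bigl(f_1\circ\dots\circ f_k(X)\bigr)$$
for any sequence $\{f_k\}\subset\F$. Writing $A_k:=f_1\circ\dots\circ f_k(X)$, the $A_k$ form a decreasing sequence of nonempty compact subsets of $X$. Every fibre $\pi^{-1}([x]_c)=[x]_c$ is closed in $X$ (connected components are closed) and hence compact, so a routine finite-intersection-property argument yields
$$\bigcap_k \pi(A_k)=\pi\Bigl(\bigcap_k A_k\Bigr);$$
since $X$ is a topological fractal, $\bigcap_k A_k$ is a singleton $\{x\}$, and therefore so is $\{[x]_c\}$.

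The only non-formal step is the compact-Hausdorff step for $X/R_c$: one has to invoke the classical fact that in a compact Hausdorff space connected components coincide with quasi-components to conclude that $R_c$ is closed. Everything else is essentially bookkeeping built on the commutation $\tilde{f}\circ\pi=\pi\circ f$ together with the topological-fractal property of $X$.
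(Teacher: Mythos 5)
Your proof is correct and follows essentially the same route as the paper's: well-definedness via connectedness of continuous images, continuity via $\pi^{-1}(\tilde{f}^{-1}(U))=f^{-1}(\pi^{-1}(U))$, and reduction of the quotient fibre condition to the fibre condition in $X$. The only cosmetic difference is that your identity $\bigcap_k\pi(A_k)=\pi\bigl(\bigcap_k A_k\bigr)$, proved by the finite intersection property on the compact classes, packages sequence-free what the paper does by extracting a convergent subsequence of points $y_n\in[x]_c\cap A_n$; likewise your quasi-component argument for Hausdorffness replaces the paper's citation of Kuratowski.
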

\begin{proof}
As the image of connected set via a continuous function must be connected, and connected components are connected, the map $\tilde{f}:X/R_c\to X/R_c$ defined in thesis
is well defined. We will show that it is continuous. Let $V$ be open in $X/R_c$. {Then $\pi^{-1}(\tilde{f}^{-1}(U))=f^{-1}(\pi^{-1}(U))$ is open as $f$ and $\pi$ are continuous. Hence $\tilde{f}^{-1}(U)$ is open in $X/R_c$ and $\tilde{f}$ is continuous.} 
Now observe that
\begin{equation}\label{filipnowy81}
\bigcup_{\tilde{f}\in\tilde{\F}} \tilde{f}(X/R_c)=X/R_c
\end{equation}
Take $[y]_c\in X/R_c$. Then for some $x\in X$ and $f\in\F$, $f(x)=y$, so $\tilde{f}([x]_c)=[f(x)]_c=[y]_c$. We get (\ref{filipnowy81}).\\
Finally, take any sequence $(\tilde{f}_n)\subset \tilde{\F}$. We will show that $\bigcap_{n\in\N}\tilde{f}_1\circ{\dots}\circ \tilde{f}_n(X/R_c)$ is singleton. Let $[x]_c$ be any element of this intersection. Then for every $n\in\N$, there exists $x_n\in X$ such that 
$$
[x]_c=\tilde{f}_1\circ{\dots}\circ \tilde{f}_n([x_n]_c)=\tilde{f}_1\circ{\dots}\circ\tilde{f}_{n-1}([f_n(x_n)]_c)={\dots}=[f_1\circ{\dots}\circ f_n(x_n)]_c.
$$
In particular, $f_1\circ{\dots}\circ f_n(x_n)\in [x]_c$. In particular, $f_1\circ{\dots}\circ f_n(x_n)\in f_1\circ{\dots}\circ f_n(X)$.
Now for $n\in\N$, let $y_n:=f_1\circ{\dots}\circ f_n(x_n)$. Since $X$ is topological fractal, it is compact metrizable and by the first observation, $(y_n)$~has a~convergent subsequence whose limit $y\in[x]_c$ (as $[x]_c$ is closed). But by the second one, this limit must belong to $\bigcap_{n\in\N}f_1\circ{\dots}\circ f_n(X)$. Hence if $[x']_c$ is different from $[x]_c$ and contained in $\bigcap_{n\in\N}\tilde{f}_1\circ{\dots}\circ \tilde{f}_n(X/R_c)$, we would also find $y'\in[x']_c$ contained in $\bigcap_{n\in\N}f_1\circ{\dots}\circ f_n(X)$. As $y\neq y'$, this would be a contradiction.\\
{Finally, $X/R_c$ is compact and Hausdorff - see for example \cite[Chapter V.\S 46.Va]{Kuratowski}.}
\end{proof}

\section{$(\La,b)$-spaces}
If $\xi=(\xi_1,{\dots},\xi_k)$, then we {define} \emph{the length of} $\xi$ by $|\xi|:=k$, if $\xi=(\xi_1,\xi_2,{\dots})$, then we set $|\xi|:=\omega$, and also we {put} $|\emptyset|:=0$. If $i\leq |\xi|$, then we set $[\xi]_i:=(\xi_1,{\dots},\xi_i)$ (and also $[\xi]_0:=\emptyset$). If $\xi,\eta$ are sequences, then we write $\xi\preceq \eta$, if $[\eta]_k=\xi$ for some $k\leq|\eta|$. Finally, if $\xi,\eta$ are sequences such that $|\xi|<\omega$, then by $\xi\ha \eta$ we denote the concatenation of $\xi$ and $\eta$.\\
If $A$ is a set and $k\in\N\cup\{0\}$, then by $A^\omega$, $A^{<\omega}$ and $A^k$ we denote the families of all countable infinite, finite and of the length $k$ sequences of elements of $A$, respectively.\\ 
Let $\oN:=\N\cup\{\omega\}=\{\omega,1,2,{\dots}\}$. We say that a nonempty set $\Lambda\subset \oN^{<\omega}$ is \emph{a tree}, if for every $\xi\in\Lambda$ and $k\leq |\xi|$, also $[\xi]_k\in \Lambda$. If $\Lambda$ is a tree, then the \emph{boundary} of $\Lambda$ is defined by
$$
\tL:=\{\xi\in \La:\forall_{k\in\oN}\;\xi\ha k\notin \La\}\cup\{\xi\in \oN^\omega:\forall_{k\in\N}\;[\xi]_k\in\La\}.
$$
Observe that $\tL$ can be identified with a family of all maximal $\preceq$-linearly ordered subsets of $\La$. Moreover, $\tL\cap \La$ consists of all finite sequences from $\tL$, and $\tL\setminus \La$ - of all infinite sequences from $\tL$.\\
We say that a tree $\La$ is \emph{proper}, if
\begin{itemize}
\item[(pi)] for every $\xi\in \La$ which ends with $\omega$, it holds $\xi\in \tL$;
\item[(pii)] for every $\xi\in\La\setminus \tL$ and $k\in\oN$, $\xi\ha k\in\La$.
\end{itemize} 
By $\La_{\max}$ we will mean the maximal proper tree, that is, the tree which consists of the empty sequence and all sequences $\eta=(\eta_1,{\dots},\eta_k)$ so that $\eta_i\in\N$ for $i=1,{\dots},k-1$. In this case, the boundary $\tL_{\max}$ consists of all finite sequences which ends with $\omega$ and all infinite sequences of elements of $\N$.\\We say that a sequence $b=(b_i)_{0\leq i\leq \omega}$ of positive reals is \emph{good}, if $b_\omega=0$ and
\begin{equation}\label{181}
M_b:=\sup\left\{\frac{b_{k}}{b_{k-1}}:k\in\N\right\}<\frac{1}{25}.
\end{equation}
It is easy to see that in this case
\begin{equation}\label{181a}
\lambda_b:=25M_b<1
\end{equation}
and for every $k\in\N$,
\begin{equation}\label{181b}
b_k\leq \frac{1}{20}\lambda_b(b_{k-1}-2b_k-b_{k+1}).
\end{equation}
The above conditions looks artificial but, as we will see, they will be needed at some places later (even more technical assumptions will be made in Section 5). On the other hand, restricting to good sequences will not decrease the generality of our constructions.\\
Finally, for every $i\in\N\cup \{0\}$, define $b^i:=(b_i,b_{i+1},{\dots})$. Clearly, $b^i$ is a good sequence, $M_{b^i}\leq M_b$ and $\lambda_{b^i}\leq\lambda_b$.\\
For a sequence $\eta\in \oN^{\leq \omega}$, we set $l(\eta)$ by (we assume $n+\omega=\omega$)
\begin{equation}\label{182}
l(\eta):=\left\{\begin{array}{ccc}\eta_1+{\dots}+\eta_k&\mbox{if}&\eta=(\eta_1,{\dots},\eta_k),\\
\omega&\mbox{if}&\eta\in\oN^{\omega},\\
0&\mbox{if}&\eta=\emptyset.
\end{array}\right.
\end{equation}
Now, relying on the above notations, we will define a certain class of metric spaces. All our next consideration will base on it.

\begin{definition}\label{183}\emph{
Let $\La$ be a proper tree and $b$ be a good sequence. A compact metric space $X$ will be called a }$(\La,b)$-space, \emph{if $X$ is of the form $X=\bigcup_{\eta\in \La\cup\tL}X_\eta$, where $X_\emptyset=X$ and:\\
 for every $\eta\in\tL$,
\begin{itemize}
\item[(i)] $X_\eta$ is nonempty, compact and contained in $X_\xi$ for every $\xi\preceq \eta$;
\end{itemize}
for every
 $\eta\in \La\setminus \tL$,
\begin{itemize}
\item[(ii)] $X_\eta=\bigcup_{k\in\oN}X_{\eta\ha k}$;
\item[(iii)] for every $k\in \N$, $\on{dist}{\big(}X_{\eta\ha k},\bigcup_{k<j\leq\omega}X_{\eta\ha j}{\big)}\geq b_{l(\eta)+k-1}-2b_{l(\eta)+k}-b_{l(\eta)+k+1}$;
\item[(iv)] $\on{diam}{\big(}\bigcup_{k\leq j<\omega}X_{\eta\ha j}{\big)}\leq b_{l(\eta)+k-1}$;
\item[(v)] there is $x_{\eta\ha\omega}\in X_{\eta\ha \omega}$ such that for every $k\in\N$, $x_{\eta\ha\omega}\in\overline{\bigcup_{k\leq j<\omega}X_{\eta\ha j}}$.
\end{itemize}
If $X$ is a $(\La,b)$-space such that for every $\eta\in  \tL$,
\begin{itemize}
\item[(s)] $X_\eta$ is singleton,
\end{itemize}
then we call $X$ a }$(\La,b,s)$-space\emph{. In such case, if $\eta\in\tL$ then the unique element of $X_\eta$ we denote by~$x_\eta$ (clearly, there is no collision with (v)).\\
If $X$ is a $(\La,b)$-space such that for some compact metric space $Z$ and every $\eta,\xi\in \La\cap \tL$,
\begin{itemize}
\item[(Z)] $X_\eta$ is a {similarity} copy (that is, {an image under similitude, i.e., isometry} with a scale) of $Z$ such that
\begin{itemize}
\item[(Z1)] if $\eta$ does not end with $\omega$, then $\on{diam}(X_\eta)=b_{l(\eta)}+b_{l(\eta)+1}$;
\item[(Z2)] if $\eta$ ends with $\omega$, then $\on{diam}(X_\eta)=b_{l(\tilde{\eta})+1}$, where $\tilde{\eta}$ is such that $\eta=\tilde{\eta}\ha\omega$;
\item[(Z3)] if $\eta$ and $\xi$ end with $\omega$, then there is a {similitude} $h:X_\eta\to X_\xi$ such that $h(x_\eta)=x_\xi$.
\end{itemize}
\end{itemize}
then we call $X$ a }$(\La,b,Z)$-space.
\end{definition}
\noindent Observe that conditions (i) and (iv) imply that $X_\eta$ is singleton for $\eta \in \tL\setminus\Lambda$.

If $X$ is a $(\La,b)$-space, $\eta\in\La\setminus\tL$ and $k\in\N$, then we also set
$$
\tilde{X}_{\eta\ha k}:=\bigcup_{k\leq j\leq \omega}X_{\eta\ha j}.
$$
Observe that if $\eta\in\La\setminus \tL$, then $X_\eta=\tilde{X}_{\eta\ha 1}$.\\
We first list basic properties of $(\La,b)$-spaces (by $d$ we will denote the metric on a metric space $X$):
\begin{lemma}\label{1822}
Let $X$ be a $(\La,b)$-space.
\begin{itemize}
\item[{(a)}] For every $\eta\in\La$ which does not end with $\omega$, $X_{\eta}$ and $\tilde{X}_{\eta}$ are compact and open;
\item[{(b)}] if $\eta\in\La\setminus \tL$ and $x\in X_{\eta\ha\omega},\;y\in X_\eta\setminus X_{\eta\ha\omega}$, then $d(y,x_{\eta\ha \omega})\leq 2d(x,y)$ and $d(x,x_{\eta \ha \omega})\leq 3d(x,y)$.
\end{itemize}
If $X$ is a $(\La,b,s)$-space which is not singleton, then
\begin{itemize}
\item[{(c)}] the family $$\{X_\eta:\eta\in\La,\;\eta\mbox{ does not end with }\omega\}\cup\{\tilde{X}_\eta:\eta\in\La,\;\eta\mbox{ does not end with }\omega\}$$
is a basis of $X$ consisting of clopen sets;
\item[{(d)}] if $\eta\in \La$, then $\La_1[\eta]:=\{\beta:\eta\ha\beta\in \La\}$ is proper tree and $X_\eta$ is $(\La_1[\eta],b^{l(\eta)},s)$-space;
\item[{(e)}] if $\eta\in\Lambda$ and $k\in\N$ are such that $\eta \ha k \in \La$, then $\La_2[\eta\ha k]:=\{i\ha\beta:i\in\N,\; \eta\ha(i+k-1)\ha\beta\in\La\}\cup\{\emptyset,\omega\}$ is proper tree, and $\tilde{X}_{\eta\ha k}$ is $(\La_2[\eta\ha k],b^{l(\eta\ha k)-1},s)$-space;
\item[{(f)}] if $Y$ is a $(\La,b',s)$-space for a good sequence $b'$, then $X$ and $Y$ are homeomorphic.\\
\end{itemize}
\end{lemma}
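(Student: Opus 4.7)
The plan is to verify the six claims in order, using two quantitative ingredients. The first is the consequence
$$
b_{k-1}<2\bigl(b_{k-1}-2b_k-b_{k+1}\bigr)
$$
of $M_b<\tfrac{1}{25}$ (which gives $1-2M_b-M_b^2>574/625>1/2$), so each separation in (iii) is comparable to the corresponding diameter bound in (iv). The second is that (iii) together with properness forces the children $\{X_{\eta\ha k}\}_{k\in\oN}$ of any $\eta\in\La\setminus\tL$ to be pairwise disjoint, whence every $x\in X$ has a unique address $\xi\in\tL$ with $x\in X_{[\xi]_n}$ for all $n\leq|\xi|$. For (a), if $\eta\in\La$ does not end with $\omega$, decompose
$$
X\setminus X_\eta=\bigcup_{\tau\pr\eta}\ \bigcup_{j\in\oN,\,j\neq\eta_{|\tau|+1}}X_{\tau\ha j};
$$
since $\eta$ is finite there are only finitely many prefixes $\tau$, and for each $\tau$ the siblings split into the finite collection $\{j<\eta_{|\tau|+1}\}$ (each separated from $X_{\tau\ha\eta_{|\tau|+1}}\supseteq X_\eta$ by (iii) applied at index $j$) and the single tail $\bigcup_{\eta_{|\tau|+1}<j\leq\omega}X_{\tau\ha j}$ (separated by (iii) at index $\eta_{|\tau|+1}$). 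A minimum over finitely many positive quantities gives $\on{dist}(X_\eta,X\setminus X_\eta)>0$, so $X_\eta$ is clopen, and $\tilde X_{\eta\ha k}=X_\eta\setminus\bigcup_{j<k}X_{\eta\ha j}$ is then clopen as well. For (b), for $y\in X_{\eta\ha k}$, (iv) together with (v) gives $d(y,x_{\eta\ha\omega})\leq b_{l(\eta)+k-1}$ while (iii) gives $d(x,y)\geq b_{l(\eta)+k-1}-2b_{l(\eta)+k}-b_{l(\eta)+k+1}$; the displayed inequality above yields $d(y,x_{\eta\ha\omega})\leq 2d(x,y)$, and the triangle inequality finishes the second bound.

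For (c), given $x\in X$ and an open $U\ni x$, follow the address $\xi$ of $x$. There are three cases. If $\xi\in\La\cap\tL$ does not end with $\omega$, then $X_\xi=\{x\}\subseteq U$ is itself a basis element. If $\xi=\tilde\xi\ha\omega$, then by (iv) and the $s$-space hypothesis $\on{diam}(\tilde X_{\tilde\xi\ha k})\leq b_{l(\tilde\xi)+k-1}\to 0$, while $x\in\tilde X_{\tilde\xi\ha k}$ for every $k$, so $\tilde X_{\tilde\xi\ha k}\subseteq U$ for large $k$. If $\xi$ is infinite, then by (pi) all entries of $\xi$ lie in $\N$, so (iv) gives $\on{diam}(X_{[\xi]_n})\leq b_{l([\xi]_n)-1}\to 0$, and $X_{[\xi]_n}\ni x$ is a basis element contained in $U$ for large $n$. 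For (d) and (e), properness of $\La_1[\eta]$ and $\La_2[\eta\ha k]$ is a direct transcription of (pi) and (pii); conditions (i)--(v) then translate term by term under the natural identifications, where in (e) the reindexing $j\mapsto j-k+1$ of children is matched by the shift $b^{l(\eta)+k-1}$ in the tail sequence.

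Finally for (f), since $X$ and $Y$ are $s$-spaces over the same tree $\La$, the map $h\colon x_\eta\mapsto y_\eta$ ($\eta\in\tL$) is a well-defined bijection, and by construction $h(X_\eta)=Y_\eta$ and $h(\tilde X_{\eta\ha k})=\tilde Y_{\eta\ha k}$. By (c) applied to both spaces, $h$ carries a basis of $X$ onto a basis of $Y$, so $h$ and $h^{-1}$ are continuous. The main obstacle is (a): once a uniform positive separation $\on{dist}(X_\eta,X\setminus X_\eta)>0$ is established, items (b)--(f) reduce to bookkeeping with diameters, addresses, and the canonical bijection induced by the $s$-structure.
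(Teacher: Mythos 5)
Your proposal is correct, and for items (b)--(f) it follows essentially the same route as the paper (the explicit constant $1-2M_b-M_b^2>\tfrac{1}{2}$ plays the role of the paper's $1-\tfrac{5}{2}M_b\geq\tfrac{1}{2}$, and your address/basis bookkeeping in (c) and (f) just spells out what the paper leaves as routine). The one place where you genuinely diverge is the compactness half of (a): the paper proves sequential compactness of $X_\xi$ from scratch, by a three-case analysis that either lands in a leaf $X_{\xi\ha\eta}$, converges into some $X_{\xi\ha\eta}$ with $\eta$ ending in $\omega$, or recursively builds an infinite branch $\eta$ along which infinitely many terms of the sequence persist and then uses the shrinking diameters from (iv). You instead observe that $\on{dist}(X_\eta,X\setminus X_\eta)>0$ makes $X_\eta$ clopen and invoke the ambient compactness of $X$, which is indeed part of Definition~\ref{183}, so a closed subset is automatically compact. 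This is legitimate and considerably shorter; what the paper's longer argument buys is independence from the a priori compactness of $X$, which is convenient when one later has to \emph{verify} that a constructed candidate (as in Theorem~\ref{fvf3}) really is compact, whereas your argument only works once compactness of the whole space is already granted. Within the hypotheses of the lemma as stated, your shortcut is sound, and the supporting decomposition of $X\setminus X_\eta$ into finitely many uniformly separated blocks (finitely many prefixes $\tau$, finitely many small siblings plus one tail at each) is a correct and slightly more careful justification of the paper's one-line distance estimate.
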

\begin{proof} 
We first prove {(a)}. Let $\xi \in\La$ does not end with $\omega$. $X_\xi$ and $\tilde{X}_\xi$ are open as, by (ii), (iii) and (\ref{181b}), $\on{dist}(X_\xi,X\setminus X_\xi)\geq b_{l(\xi)-1}-2b_{l(\xi)}-b_{l(\xi)+1}>0$ and $\on{dist}(\tilde{X}_\xi,X\setminus \tilde{X}_\xi)\geq b_{l(\xi)-2}-2b_{l(\xi)-1}-b_{l(\xi)}>0$. Now~we show that they are compact. We first prove that $X_\xi$ is compact. Suppose $\xi\notin \tL$, choose a sequence $(x_n)\subset X_\xi$ and consider the following cases:\\
Case 1. Infinitely many elements $x_n$ belong to $X_{\xi\ha\eta}$ for some $\eta$ with $\xi\ha\eta\in \tL$. Then $(x_n)$ has convergent subsequence by (i).\\
Case 2. Some subsequence of $(x_n)$ converges to some element of $X_{\xi\ha\eta}$, where $\eta$ ends with $\omega$. Then we are done.\\
Case 3. The previous cases do not hold.\\
Step by step, we will define a sequence $\eta=(\eta_1,\eta_2,{\dots})$ such that $\xi\ha\eta\in\tL$ and for every $k$, infinitely many elements $x_n$ belong to $X_{\xi\ha[\eta]_k}$. This will allow to choose a subsequence $(x_{n_k})$ such that $x_{n_k}\in X_{\xi\ha[\eta]_k}$ for $k\in\N$. By (iv) and a fact that $\emptyset\neq X_{\xi\ha\eta}\subset X_{\xi\ha[\eta]_k}$ for every $k\in\N$, this will mean that $(x_{n_k})$ is convergent to the unique element of $X_{\xi\ha\eta}\subset X_\xi$.\\
Now we show how to choose $\eta$. By our assumptions, (iv) and (v), there is $k\in\N$ such that infinitely many elements $x_n$ belong to $\bigcup_{j\leq k_1}X_{\xi\ha j}$. Hence we can find $\eta_1\in\{1,{\dots},k_1\}$ such that infinitely many elements of $x_n$ belong to $X_{\xi\ha\eta_1}$. Similarly, there is $k_2\in\N$ so that infinitely many elements of $x_n$ belong to $\bigcup_{j\leq k_2}X_{\xi\ha \eta_1\ha j}$, and hence we find $\eta_2\in\{1,{\dots},k_2\}$ so that infinitely many elements $x_n$ belong to $X_{\xi\ha\eta_1\ha\eta_2}$. We can continue this procedure as we assume that Cases 1 and 2 do not hold.\\
We proved that $X_\xi$ is compact. Now let $\tilde{\xi}$ be such that $\xi=\tilde{\xi}\ha k$ for some $k\in\N$. Then $\tilde{X}_{\xi}=X_{\tilde{\xi}}\setminus\bigcup_{i<k}X_{\xi\ha i}$ and, by what we already proved, $\tilde{X}_{\xi}$ is compact.

Now we prove {(b)}. Assume that $x\in X_{\eta\ha\omega}$ and $y\in X_{\eta\ha k}$ for some $k\in\N$. By conditions (iii)-(v) and (\ref{181b}), we have
$$
d(x_{\eta\ha \omega},y)\leq \on{diam}{\Bigg(}\overline{\bigcup_{k\leq j<\omega}X_{\eta\ha j}}{\Bigg)} =\on{diam}{\Bigg(}\bigcup_{k\leq j<\omega}X_{\eta\ha j}{\Bigg)}\leq b_{l(\eta)+k-1}
$$
and
$$
d(x,y)\geq \on{dist}{\bigg(}X_{\eta\ha k},\bigcup_{k<j\leq\omega}X_{\eta\ha j}{\bigg)}\geq b_{l(\eta)+k-1}-2b_{l(\eta)+k}-b_{l(\eta)+k+1}\geq  $$
$$\geq b_{l(\eta)+k-1}-\frac{5}{2}b_{l(\eta)+k}\geq b_{l(\eta)+k-1}{\bigg(}1-\frac{5}{2}M_b{\bigg)}\geq 
\frac{1}{2}b_{l(\eta)+k-1}\geq \frac{1}{2}d(x_{\eta\ha\omega},y).
$$
Hence we get the first inequality. To see the second, use the first one:
$$
d(x,x_{\eta\ha\omega})\leq d(x,y)+d(y,x_{\eta\ha\omega})\leq 3d(x,y).
$$
Point {(c)} follows from {(a)}, (ii) and facts that $\on{diam}(X_\eta)\leq b_{l(\eta)}$ and $\on{diam}(\tilde{X}_{l(\eta)})\leq b_{l(\eta)-1}$ (which are implied by (iii) and (iv)). \\
To see {(d)}, it is routine to check that $\La_1[\eta]$ is proper and the family $X_\beta':=X_{\eta\ha\beta},\;\beta\in\La_1[\eta]$, satisfies the required conditions. \\
Similarly, to get {(e)}, it is easy to see that $\La_2[\eta\ha k]$ is proper and that the family $X''_{i\ha\beta}:=X_{\eta\ha(i+k-1)\ha\beta}$, $i\ha\beta\in\La_2[\eta\ha k]$, satisfies the required conditions. (in fact, {(d)} follows from {(e)} by our earlier observation).\\ 
To see {(f)}, note that the map $X\ni x_\eta\to y_\eta\in Y$ is homeomorphism, which can be easily proved using~{(c)}.

\end{proof}

Now we show that the above definition is non void, and we can embed $(\La,b)$ spaces into $\R$ or other Hilbert spaces. We should start with defining appropriate "skeleton" on the real line.\\
Let $b$ be a good sequence. We say that a family $\I=\{I_\xi:\xi\in \Lambda_{\on{max}}\}$ of closed intervals is a \emph{$b$-family}, if for every $\eta\in \Lambda_{\on{max}}$ which does not end with $\omega$,
\begin{itemize}
\item[($\I$1)] $\on{diam}(I_\eta)=b_{l(\eta)}+b_{l(\eta)+1}$;
\item[($\I$2)] for every $k\in\oN$,
\begin{itemize}
\item[($\I$2a)] if $k\in\N$, then $\on{max}I_{\eta\ha k}=\min I_\eta+b_{l(\eta)+1}+b_{l(\eta)+k-1}$;
\item[($\I$2b)] if $k=\omega$, then $\min I_{\eta\ha\omega}=\min I_\eta$, and $\on{max}I_{\eta\ha \omega}=\min I_\eta+b_{l(\eta)+1}$;
\end{itemize}
\end{itemize}
Observe that $I_{\eta \ha k} \subset I_\eta$, $\on{max} I_\eta = \on{max} I_{\eta \ha 1}$ for $\eta$ which does not end with $\omega$, and $\on{dist}(I_{\eta\ha k},I_{\eta\ha (k+1)})=b_{l(\eta)-1}-2b_{l(\eta)}-b_{l(\eta)+1}$.
Additionally, for every infinite $\eta\in\tL_{\max}$, set $I_\eta:=\bigcap_{k\in\N}I_{[\eta]_k}$ (clearly, it is a singleton).
\begin{remark}{It is worth to observe that we can look on the family $\{I_\eta:\eta\in\La_{\max},\;\eta\mbox{ does not end with }\omega\}$ as on standard ternary Cantor scheme on $\R$ (with the length of each interval of the $k$-th step equal to $b_{k}$), but for our purposes we enumerate these intervals in a different way.}
\end{remark}

\begin{theorem}\label{fvf3}
Assume that $\La$ is a proper tree and $b$ is a good sequence.\\
(1) There exists $X\subset \R$ which is a $(\La,b,s)$-space.\\
(2) If $Z$ is a nonempty, non singleton, compact subset of a {normed} space $H$, then there exists $X\subset H$ which is a $(\La,b,Z)$-space.\\
(3) If $Z$ is a nonempty compact metric space which is not a singleton, then there exists a metric space $X$ which is a $(\La,b,Z)$-space.
\end{theorem}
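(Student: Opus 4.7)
The plan for all three parts is to build $X$ from the $b$-family of intervals $\I = \{I_\eta : \eta \in \La_{\on{max}}\}$ (defined earlier), specialized to the subtree $\La$, with the leaves ``decorated'' appropriately.

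For (1), I would pick one point $x_\eta \in I_\eta$ per boundary sequence and take $X$ to be the set of these points. Concretely, if $\eta \in \tL \setminus \La$ is infinite, let $x_\eta$ be the unique point of $\bigcap_k I_{[\eta]_k}$; if $\eta \in \tL \cap \La$ is finite, let $x_\eta := \max I_\eta$. Set $X_\eta := \{x_\xi : \xi \in \tL,\; \eta \preceq \xi\}$ for $\eta \in \La$, and $X := X_\emptyset$. Then (i), (ii), and (s) are immediate. Condition (v) holds because $\max I_{\eta \ha \omega} = \min I_\eta + b_{l(\eta)+1}$ is approached by $\max I_{\eta \ha j}$ (or by a suitably chosen deeper selected point when $\eta \ha j$ is not a leaf) as $j \to \infty$. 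Conditions (iii) and (iv) reduce to the corresponding facts about the intervals $I_{\eta \ha k}$, which follow directly from ($\I$1)--($\I$2) together with a short computation using $M_b < 1/25$.

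For (2), the plan is to ``thicken'' each leaf singleton of (1) into a similitude copy of $Z$ inside $H$. Fix $p \in Z$ and a unit vector $v \in H$ along which $Z$ has maximal extent; in the Hilbert case one may take $v := (q - p)/c_Z$ with $p, q \in Z$ realizing $\on{diam}(Z) = c_Z$. For each leaf $\eta \in \La \cap \tL$, set $X_\eta := (\min I_\eta)\, v + s_\eta(Z - p)$ with scale $s_\eta := d_\eta/c_Z$, where $d_\eta$ is the diameter prescribed by (Z1) or (Z2); extend to non-leaves by $X_\eta := \bigcup_{k \in \oN} X_{\eta \ha k}$, and choose the distinguished points $x_\eta \in X_\eta$ for $\eta$ ending with $\omega$ to be the images of a fixed $z^* \in Z$ under the natural similitudes (this forces (Z3) and, taking $z^* = q$, also ensures (v)). The $v$-projection of each $X_\eta$ then equals $I_\eta$, so (iii) follows from the interval separation of the $b$-family, and (Z1)--(Z3) hold by construction. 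For (3), the plan is to reduce to (2): every compact metric space $Z$ embeds isometrically into the Banach space $C(Z)$ via the Kuratowski map $z \mapsto d_Z(z, \cdot) - d_Z(z_0, \cdot)$ for a fixed $z_0 \in Z$, and applying (2) with $H := C(Z)$ then yields a $(\La, b, Z)$-space inside $H$, which is in particular a metric space.

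The main obstacle is verifying (iv) in part (2): in a general normed space each similitude copy carries a perpendicular extent of order $d_\eta$ in addition to its axial extent along $v$, so a crude triangle-inequality bound for $\on{diam}\big(\bigcup_{j \geq k} X_{\eta \ha j}\big)$ exceeds $b_{l(\eta)+k-1}$ by a correction of order $b_{l(\eta)+k}$. Closing this gap is where the tight decay $M_b < 1/25$ together with the careful alignment of $x_\eta$, the direction $v$, and the diameter direction of $Z$ (so that each copy's axial extent absorbs precisely the interval $I_\eta$) do the real work.
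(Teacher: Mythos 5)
Your part (1) is essentially the paper's construction (the paper fixes $x_\eta:=\max I_\eta$ only for leaves ending with $\omega$ and takes an arbitrary point of $I_\eta$ otherwise, but your uniform choice works just as well). Your part (3) is a genuinely different route: the paper builds the metric on the abstract union of similarity copies by hand, declaring $d(x,y)$ through distances between anchor points $z_\xi$ of the intervals and then checking the triangle inequality, whereas you reduce (3) to (2) by embedding $Z$ isometrically into the Banach space $C(Z)$ via the Kuratowski map. Since a $(\La,b,Z')$-space for an isometric copy $Z'$ of $Z$ is a $(\La,b,Z)$-space, this reduction is valid and spares you the metric verification; it is arguably cleaner than what the paper does.

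The gap is in the verification of (iii) and (iv) in part (2). Your anchoring of each leaf copy (mapping a diameter-realizing pair $p,q\in Z$ to $\min I_\eta$ and $\max I_\eta$) is exactly the paper's construction, but the argument you sketch for why it works does not go through as stated. The claim that ``the $v$-projection of each $X_\eta$ equals $I_\eta$'' is only available in the Hilbert case, where $\langle z-p,q-p\rangle\in[0,\|q-p\|^2]$ for all $z\in Z$ because $p,q$ realize the diameter; in a general normed space there is no orthogonal projection, and for a norm-one Hahn--Banach functional $f$ with $f(v)=1$ the set $f(X_\eta)$ need not be contained in $I_\eta$, so the separation bound (iii) cannot be read off from the separation of the intervals this way. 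Moreover, you correctly identify (iv) as the main obstacle but leave it open, and the fix is not extra slack from $M_b<\tfrac1{25}$: the paper proves the \emph{exact} identity $\on{diam}(X_\eta)=\on{diam}(I_\eta)$, so there is no ``correction of order $b_{l(\eta)+k}$'' to absorb. The mechanism is a telescoping triangle inequality valid in any normed space, using only that each leaf copy $X_\xi$ contains $\min I_\xi$ and $\max I_\xi$ and has $\on{diam}(X_\xi)=\|\min I_\xi-\max I_\xi\|$, together with the collinear ordering of the interval endpoints. For instance, if $x\in X_{\eta\ha k}$, $y\in X_{\eta\ha j}$ with $k<j$, then
$$\|\max I_{\eta\ha k}-\min I_{\eta\ha j}\|\le\|\max I_{\eta\ha k}-x\|+\|x-y\|+\|y-\min I_{\eta\ha j}\|\le\on{diam}(I_{\eta\ha k})+\|x-y\|+\on{diam}(I_{\eta\ha j}),$$
and since the left-hand side equals $\on{diam}(I_{\eta\ha k})+\|\min I_{\eta\ha k}-\max I_{\eta\ha j}\|+\on{diam}(I_{\eta\ha j})$ by collinearity, one gets $\|x-y\|\ge\|\min I_{\eta\ha k}-\max I_{\eta\ha j}\|\ge b_{l(\eta)+k-1}-2b_{l(\eta)+k}-b_{l(\eta)+k+1}$, which is (iii); the companion upper estimate $\|x-y\|\le\|\max I_{\eta\ha k}-\min I_{\eta\ha j}\|\le\on{diam}\big(\bigcup_{k\le i<\omega}I_{\eta\ha i}\big)\le b_{l(\eta)+k-1}$ gives (iv) and the diameter identity. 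Replacing your projection step by this argument closes the gap and makes the proof work for arbitrary normed spaces, as the theorem requires.
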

\begin{proof}
We first prove (1). Take a $b$-family $\I$ and $\eta\in\tL$. If $\eta$ ends with $\omega$~then set $x_\eta := \on{max} I_\eta$, otherwise let $x_\eta$ be any element of $I_\eta$.
Then it is easy to see that $X:=\{x_\eta:\eta\in\tL\}$ is a $(\La,b,s)$-space (for the family $X_\eta:=\{x_\xi:\xi\in \tL,\;\eta\preceq \xi\},\;\eta\in\La$).

Now we prove (2). We first {state} the following Claim:\\
\textbf{Claim.} {For any distinct points $x,y$ of a normed space $H$ and any distinct points $x',y'\in \R(x-y):=\{t(x-y):t\in\R\}$ there exists a bijective {similitude} $h:H\to H$ such that $h(x)=x'$ and $h(y)=y'$.\\
The {similitude} $h$ can be defined by $h(z):=x'+\frac{||y'-x'||}{||y-x||}(z-x)$ or $h(z):=x'-\frac{||y'-x'||}{||y-x||}(z-x)$, depending on the mutual relationship between $x'$ and $y'$.\\}
Since $Z$ is compact, there are $x_0,y_0\in Z$ such that $||x_0-y_0||=\on{diam}(Z)$.
Now choose any line $l\subset H$, and identify $l$ with the real line $\R$, and find any $b$-family $\I$ on $l$. By the Claim, for every $\eta\in\La\cap \tL$, we~can find a {similitude} $h_\eta:H\to H$ such that $h_\eta(x_0)=\min I_\eta$ and $h_\eta(y_0)=\max I_\eta$. Set $X_\eta:=h_\eta(Z)$. Moreover, if $\eta\in\tL\setminus \La$, then let $X_\eta:=I_\eta=\bigcap_{k\in\N}I_{[\eta]_k}$ (clearly, $X_\eta$ is singleton). For every $\eta\in\La$, let~$X_\eta:=\bigcup_{\xi\in \tL,\;\eta\preceq \xi}X_\xi$, and finally set $X:=X_\emptyset = \bigcup_{\eta\in\tL}X_\eta$. We will prove that $X$ is a $(\La,b,Z)$-space.\\
Clearly, the condition (Z) is satisfied (as $\on{diam}(h(Z))=||h(x_0)-h(y_0)||$ for every considered {similitude} and, if $\eta$ ends with $\omega$, then $x_\eta$ is the image of $y_0$ via $h$). Condition (i) follows from the fact that $Z$ is nonempty and compact, and (ii) directly from definition.\\
Now we show that for every $\eta\in\La\cup\tL$,
\begin{equation}\label{fvf1}
\min I_\eta,\max I_\eta\in X_\eta\;\;\;\mbox{and}\;\;\;\on{diam}(X_\eta)=\on{diam}(I_\eta).
\end{equation}
If $\eta\in\tL$, then (\ref{fvf1}) clearly holds. Hence assume $\eta\in\La\setminus\tL$. By definition, $\min I_\eta=\min I_{\eta\ha\omega}\in X_{\eta\ha\omega}\subset X_\eta$. Now if $\eta\ha(1,1,1,{\dots})\in\tL$, then $\max I_\eta\in\bigcap_{k\in\N} I_{\eta\ha[(1,1,{\dots})]_k}=X_{\eta\ha(1,1,{\dots})}\subset X_\eta$, and if for some $k\in\N$, $\eta\ha[(1,1,{\dots})]_k\in\tL$, then $\max I_\eta=\max I_{\eta\ha[(1,1,{\dots})]_k}\in X_{\eta\ha[(1,1,{\dots})]_k}\subset X_\eta$.\\
Now choose any $x,y\in X_\eta$ and let $\xi,\xi'\in\tL$ be such that $\eta\preceq \xi$, $\eta\preceq \xi'$, $x\in X_{\xi}$ and $y\in X_{\xi'}$. If $\xi=\xi'$, then $||x-y||\leq\on{diam}(I_\xi)\leq\on{diam}(I_\eta)$. Hence assume $\xi\neq  \xi'$ and, without loss of generality, that {$\xi_{k}>\xi'_{k}$ for some $k>|\eta|$}. Then
$$
||x-y||\leq ||x-\max I_{\xi}||+||\max I_{\xi}-\min I_{\xi'}||+||\min I_{\xi'}-y||\leq $$ $$\leq ||\min I_\xi-\max I_{\xi}||+||\max I_{\xi}-\min I_{\xi'}||+||\min I_{\xi'}-\max I_{\xi'}||=$$ $$=||\min I_\xi-\max I_{\xi'}||\leq \on{diam}(I_\eta).
$$
This ends the proof of (\ref{fvf1}). Now we prove (iii), (iv) and (v). We will use (\ref{fvf1}) and definition of a~$b$-family. Take any $\eta\in\La\setminus\tL$ and $k<j\leq\omega$. Then for every $x\in X_{\eta\ha k}$ and $y\in X_{\eta\ha j}$, we have by (\ref{fvf1}),
$$
||\max I_{\eta\ha k}-\min I_{\eta\ha k}||+||\min I_{\eta\ha k}-\max I_{\eta\ha j}||+||\max I_{\eta\ha j}-\min I_{\eta\ha j}||=$$ $$=||\max I_{\eta\ha k}-\min I_{\eta\ha j}||\leq ||\max I_{\eta\ha k}-x||+||x-y||+||y-\min I_{\eta\ha j}||\leq $$ $$\leq ||\max I_{\eta\ha k}-\min I_{\eta\ha k}||+||x-y||+||\max I_{\eta\ha j}-\min I_{\eta\ha j}||
$$
so
$$
||x-y||\geq ||\min I_{\eta\ha k}-\max I_{\eta\ha j}||\geq b_{l(\eta)+k-1}-2b_{l(\eta)+k}-b_{l(\eta)+k+1}.
$$
We proved (iii). Now assume additionally $j<\infty$. Then
$$
||x-y||\leq||x-\min I_{\eta\ha k}||+||\min I_{\eta\ha k}-\max I_{\eta\ha j}||+||\max I_{\eta\ha j}-y||\leq $$ $$\leq ||\max I_{\eta\ha k}-\min I_{\eta\ha k}||+||\min I_{\eta\ha k}-\max I_{\eta\ha j}||+||\max I_{\eta\ha j}-\min I_{\eta\ha j}||\leq$$ $$\leq \on{diam}{\Bigg(}\bigcup_{k\leq i<\omega}I_{\eta\ha i}{\Bigg)}\leq b_{l(\eta)+k-1}
$$
and we proved (iv). Finally, (\ref{fvf1}) implies that $x_{\eta\ha\omega}=\max I_{\eta\ha\omega}\in \overline{\bigcup_{k\leq i<\omega}X_{\eta\ha j}}$, hence (v) also holds.
This ends the proof of (2).

We just sketch the proof of (3). The idea is similar as in point (2) - we choose a $b$-family $\I$ and try to replace $I_\eta$ by appropriate copy of $Z$, for $\eta\in\tL$.\\
For every $\eta\in\tL\cap\La$, let $X_\eta$ be a {similarity} copy of $Z$ according to $(Z1),(Z2)$ and let $d_\eta$ be initial metric on $X_\eta$. If $\eta\in\tL$ ends with $\omega$, then let $x_\eta\in X_\eta$ be the image, via the {similitude}, of initially chosen point $y_0\in Z$ such that for some $x\in Z$, $\rho(x,y_0)=\on{diam}(Z)$. Thanks to it, the condition (Z3) will be satisfied. 
If $\eta\in\tL$ is infinite, then let $X_\eta$ be singleton. 
Moreover, for every $\eta\in\La\setminus\tL$, let $X_\eta$ be defined like before in the proof of (2), and finally set $X:=X_\emptyset$. Now we will define appropriate metric $d$ on $X$.\\ For every $\xi\in\La$, let
$$
z_\xi:=\left\{\begin{array}{ccc}\mbox{midpoint of }I_\xi&\mbox{if}&\xi\mbox{ does not end with }\omega\\
\max I_\xi&\mbox{if}&\xi\mbox{ ends with }\omega\end{array}\right.,
$$
If $\eta\in\tL\cap\La$, then let the metric $d$ coincide with $d_\eta$ on $X_\eta$. Now let $x,y\in X$ be distinct and assume that they do not belong to the same $X_\eta$ for $\eta\in\tL$. Without loss of generality, there exist $\eta\in\La\setminus\tL$ and $k<j\leq\omega$, such that $x\in X_{\eta\ha k},\;y\in X_{\eta\ha j}$. If $j\neq\omega$, then define
$$
d(x,y):=|z_{\eta\ha k}-z_{\eta\ha j}|,
$$
and if $j=\omega$, then define
$$
d(x,y):=d(y,x_{\eta\ha \omega})+|z_{\eta\ha \omega}-z_{\eta\ha k}|.
$$
It is routine to check that $d$ is indeed a metric and that $(X,d)$ is a $(\La,b,Z)$-space.

\end{proof}

We will also need to deal with certain subsets of $(\La,b,s)$-spaces, hence we introduce the following notation: given a $(\La,b,s)$-space $X$ and its compact subset $M\subset X$, we set $\La_M:=\{\eta\in\La:X_\eta\cap M\neq\emptyset\}$. We skip the proof of the following observation:
\begin{observation}\label{1819}
In the above frame:
\begin{itemize}
\item[(M1)] $\La_M$ is a tree (but need not be proper);
\item[(M2)] $\tL_M=\{\eta\in\tL:x_\eta\in M\}$;
\item[(M3)] $M=\{x_\eta:\eta\in\tL_M\}$;
\item[(M4)] If $N\neq M$ and $N$ is also compact, then $\La_N\neq\La_M$.
\end{itemize}
\end{observation}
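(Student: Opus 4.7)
The plan is to prove the four claims in order, each leveraging its predecessors. For (M1), $\emptyset \in \La_M$ since $X_\emptyset = X \supset M$ (under the implicit assumption $M \neq \emptyset$), and closure under prefixes is immediate: for $\eta \in \La_M$ and $k \leq |\eta|$, the nested inclusion $X_\eta \subset X_{[\eta]_k}$ coming from conditions (i) and (ii) forces $X_{[\eta]_k} \cap M \neq \emptyset$. That $\La_M$ need not be proper is clear, since conditions (pi) and (pii) depend on the global structure of $\La$ and there is no reason that a compact $M$ should respect them.

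For (M2), I would prove the two inclusions separately. The inclusion $\supseteq$ is straightforward: if $\eta \in \tL \cap \La$ with $x_\eta \in M$, then $X_\eta = \{x_\eta\} \subset M$ gives $\eta \in \La_M$, and no $\eta \ha k$ belongs to $\La$ (by definition of $\tL$), hence none belongs to $\La_M$, so $\eta \in \tL_M$; if $\eta \in \tL \setminus \La$ is infinite with $x_\eta \in M$, then $\{x_\eta\} = X_\eta \subset X_{[\eta]_k}$ gives $[\eta]_k \in \La_M$ for every $k$, so again $\eta \in \tL_M$. For $\subseteq$, take $\eta \in \tL_M$ and split into the two cases from the definition of the boundary. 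If $\eta \in \La_M$ with no extension in $\La_M$, I must first show $\eta \in \tL$: were $\eta \in \La \setminus \tL$, properness (pii) would force $\eta \ha k \in \La$ for all $k \in \oN$ and (ii) would split $X_\eta = \bigcup_k X_{\eta\ha k}$, so some $\eta \ha k$ would lie in $\La_M$ -- contradiction. Hence $\eta \in \tL$, $X_\eta = \{x_\eta\}$, and $\emptyset \neq X_\eta \cap M$ forces $x_\eta \in M$. If instead $\eta$ is infinite with all prefixes in $\La_M$, properness (pi) rules out $\eta_k = \omega$, so $\eta \in \N^\omega$; picking $y_k \in X_{[\eta]_k} \cap M$, the bound $\on{diam}(X_{[\eta]_k}) \leq b_{l([\eta]_k)-1}$ from (iv), together with $l([\eta]_k) \geq k$ and the geometric decay of $b$ guaranteed by (\ref{181}), shows that $(y_k)$ is Cauchy; its limit lies in $M$ by closedness and in $\bigcap_k X_{[\eta]_k} = X_\eta = \{x_\eta\}$ by compactness of each $X_{[\eta]_k}$, so $x_\eta \in M$.

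Part (M3) then follows by applying (M2) with $M = X$, yielding $X = \{x_\eta : \eta \in \tL\}$; every $m \in M$ therefore equals some $x_\eta$ with $\eta \in \tL$, and a second application of (M2) identifies such $\eta$ with exactly the elements of $\tL_M$. Part (M4) is a one-line consequence: the definition of the boundary shows $\tL_M$ is determined by $\La_M$, and (M3) shows $M$ is determined by $\tL_M$, so $\La_M = \La_N$ forces $M = N$; contraposing gives (M4). The only nontrivial step is the infinite-sequence case of (M2), which converts \emph{all prefixes touch $M$} into \emph{$x_\eta \in M$} via the Cauchy-sequence argument based on the geometric decay from (\ref{181}) and the closedness of $M$; everything else is bookkeeping with the tree structure.
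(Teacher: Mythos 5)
The paper gives no proof of this observation (it explicitly states ``We skip the proof''), so there is nothing to compare against; judged on its own, your argument is essentially correct and supplies the right ingredients: the prefix-nesting $X_{\xi\ha j}\subset X_\xi$ for (M1), the case split along the two clauses of the boundary definition together with (pi), (pii), (ii) and (s) for (M2), and the Cauchy/vanishing-diameter argument (using $l([\eta]_k)\ge k$ and (\ref{181})) for the infinite-branch case.

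The one weak spot is the first step of (M3). Applying (M2) with $M=X$ only yields the vacuous identity $\tL_X=\{\eta\in\tL:x_\eta\in X\}=\tL$; it does \emph{not} yield $X=\{x_\eta:\eta\in\tL\}$, which is the fact you actually need (Definition \ref{183} only posits $X=\bigcup_{\eta\in\La\cup\tL}X_\eta$ with $X_\emptyset=X$, which carries no information). You must argue separately that every $x\in X$ equals $x_\eta$ for some $\eta\in\tL$: either follow the branch of $x$ through the decomposition (ii) (at each stage pick $j$ with $x\in X_{\xi\ha j}$; the branch terminates in $\tL\cap\La$ or produces an infinite $\eta$ with $x\in\bigcap_k X_{[\eta]_k}=\{x_\eta\}$ by your own diameter estimate), or, more in the spirit of your write-up, apply (M2) to the compact set $M=\{x\}$ after noting that every nonempty tree has nonempty boundary. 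With that repaired, the deduction of (M3) from (M2) and of (M4) from (M3) goes through exactly as you describe.
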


We end this section with two general Lemmas, which will be keys in later constructions of appropriate GIFSs.

\begin{lemma}\label{1814}
Assume that $X$ is a $(\La,b)$-space, $\emptyset\neq Z\subset X$ and 
 for every $2\leq k<\omega$, there exists a map $h_k:Z\to X$ so that for every $2\leq k<j<\omega$,
\begin{itemize}
\item[(i)] $\on{Lip}(h_k)\leq \frac{1}{4} \lambda_b$;
\item[(ii)] for all $x\in Z$, $d(h_k(x),h_j(x))\leq 2b_{k-1}$.
\end{itemize}
Let $h_\omega (x) := \lim_{k\to \infty} h_k(x), x\in X$, and for $k\in\oN$, let $Z_k:=Z\cap X_k$. Then the map $F:Z\times Z\to X$ defined by (we set $\omega+1=\omega$) $$F(x,y):=h_{k+1}(x),\mbox{ if }y\in Z_k,\mbox{ }k\in\oN,$$
satisfies the following
\begin{equation*}
\on{Lip}(F)\leq \frac{1}{2}\lambda_b\;\;\mbox{ and }\;\;F(Z\times Z)=\bigcup\{h_k(Z):k\in\oN,\;k\geq 2,\;Z_{k-1}\neq\emptyset\}.
\end{equation*}
\end{lemma}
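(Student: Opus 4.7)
The plan is to verify the Lipschitz estimate by a case analysis on where $y_1,y_2$ live, and to read off the image description directly from the definition of $F$. First I would observe that hypothesis (ii), together with $b_k\to 0$ (which follows from $M_b<1/25$), forces $(h_k(x))_{k\ge 2}$ to be Cauchy for every $x\in Z$, so $h_\omega(x):=\lim_k h_k(x)$ is well defined; passing to the limit in (ii) strengthens it to
$$d(h_k(x),h_j(x))\le 2b_{k-1}\quad\text{for all }2\le k<j\le\omega,\ x\in Z.$$
The image identity is then just a reindexing: $F$ takes the value $h_{k+1}(x)$ precisely for those $k\in\oN$ with $Z_k\neq\emptyset$, so (using $\omega+1=\omega$) $F(Z\times Z)=\bigcup\{h_j(Z):j\in\oN,\ j\ge 2,\ Z_{j-1}\neq\emptyset\}$, as claimed.

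For the Lipschitz constant, I would fix $(x_1,y_1),(x_2,y_2)\in Z\times Z$ with $y_i\in Z_{k_i}$. If $k_1=k_2$, then $F(x_i,y_i)=h_{k_1+1}(x_i)$ and assumption (i) gives $d(F(x_1,y_1),F(x_2,y_2))\le\tfrac{1}{4}\lambda_b\,d_2((x_1,y_1),(x_2,y_2))$ at once. Otherwise I may assume $k_1<k_2\le\omega$ (so $k_1\ge 1$), insert $h_{k_1+1}(x_2)$, and split:
$$d(F(x_1,y_1),F(x_2,y_2))\le d(h_{k_1+1}(x_1),h_{k_1+1}(x_2))+d(h_{k_1+1}(x_2),h_{k_2+1}(x_2)),$$
bounding the first summand by $\tfrac{1}{4}\lambda_b\,d(x_1,x_2)$ via (i) and the second by $2b_{k_1}$ via the strengthened (ii).

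The main obstacle is to absorb the jump $2b_{k_1}$ into $\lambda_b\,d(y_1,y_2)$, and this is precisely where condition (iii) of the $(\La,b)$-space structure together with the inequality (\ref{181b}) pay off. Since $y_1\in X_{k_1}$ and $y_2\in\bigcup_{k_1<j\le\omega}X_j$, condition (iii) applied with $\eta=\emptyset$ yields $d(y_1,y_2)\ge b_{k_1-1}-2b_{k_1}-b_{k_1+1}$, and then (\ref{181b}) gives $2b_{k_1}\le\tfrac{1}{10}\lambda_b\,d(y_1,y_2)$. Consequently
$$d(F(x_1,y_1),F(x_2,y_2))\le\tfrac{1}{4}\lambda_b\,d(x_1,x_2)+\tfrac{1}{10}\lambda_b\,d(y_1,y_2)\le\tfrac{7}{20}\lambda_b\,d_2((x_1,y_1),(x_2,y_2)),$$
which is safely below $\tfrac{1}{2}\lambda_b\,d_2((x_1,y_1),(x_2,y_2))$; together with the equal-indices case this yields $\on{Lip}(F)\le\tfrac{1}{2}\lambda_b$ and completes the argument.
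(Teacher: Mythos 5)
Your proposal is correct and follows essentially the same route as the paper's proof: the same two-case analysis on the indices of $y_1,y_2$, the same triangle-inequality split into a term controlled by (i) and a jump controlled by (ii), and the same absorption of that jump into $\lambda_b\,d(y_1,y_2)$ via Definition \ref{183}(iii) and inequality (\ref{181b}). The only cosmetic differences are the choice of intermediate point in the triangle inequality and the slightly sharper constant $\tfrac{7}{20}\lambda_b$ you obtain in place of the paper's $\tfrac{1}{2}\lambda_b$.
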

\begin{proof}
Since $X$ is compact and $(h_k(x))_{k\in\N}$ is Cauchy, the map $h_\omega$ is well defined. By definition, we have
$$F(Z \times Z) =  F{\Bigg(}Z \times \bigcup_{k\in\oN,\;Z_k\neq\emptyset} Z_k{\Bigg)}  =\bigcup\{h_k(Z):k\in\oN,\;k\geq 2,\;Z_{k-1}\neq\emptyset\}.$$
We will show that $\on{Lip}(F)\leq \frac{1}{2}\lambda_b$.
Take distinct $(x,y), (x',y') \in Z \times Z$ so that $F(x,y)\neq F(x',y')$ and~consider cases.\\
Case 1. {For some $k\in\oN$, it holds $y,y' \in Z_k$}. Then
$$
d(F(x,y), F(x',y')) = d(h_{k+1}(x) , h_{k+1}(x')) \leq $$ $$\leq \on{Lip}(h_{k+1}) d(x,x') \leq \frac{1}{4}\lambda_b d_m((x,y), (x',y')).
$$
Case 2. For some $k<j\leq\omega${, it holds} $y\in Z_k$ and $y' \in Z_j$. Then by Case 1, (iii) and (\ref{181b}),
$$
d(F(x,y),F(x',y'))\leq d(F(x,y),F(x,y'))+d(F(x,y'),F(x',y'))\leq $$ $$\leq d(h_{k+1}(x),h_{j+1}(x))+\frac{1}{4}\lambda_bd(x,x')\leq 2b_{k}+\frac{1}{4}\lambda_b d(x,x')\leq $$ $$\leq \frac{1}{4}\lambda_b(b_{k-1}-2b_k-b_{k+1})+\frac{1}{4}\lambda_b d(x,x')\stackrel{(*)}{\leq} \frac{1}{4}\lambda_b d(y,y')+\frac{1}{4}\lambda_bd(x,x')\leq $$ $$\leq  \frac{1}{2}\lambda_b d_m((x,y),(x',y')),
$$
where (*) follows from $d(y,y')\geq \on{dist}(Z_k,Z_j)\geq \on{dist}(X_k,X_j)\geq b_{k-1}-2b_k-b_{k+1}$.
\end{proof}
\begin{remark}\label{1814a}{
Observe that assumption (ii) of the above Lemma is satisfied if $h_k:Z\to X_k$ for all $2\leq k<\omega$. Indeed, in this case we have by Definition \ref{183}(iv), $d(h_k(x),h_j(x))\leq\on{diam}(\bigcup_{k\leq i<\omega}X_i)\leq b_{k-1}$. Then also $h_\omega(Z)=\{x_\omega\}$.}
\end{remark}
\begin{lemma}\label{1816}
Assume that $X,Y$ are $(\La,b,s)$ and $(\La',b',s)$-spaces such that $\lambda:=\sup\left\{\frac{b'_k}{b_k}:k\in\N\cup\{0\}\right\}<\infty$, and~let $M,M'$ be their compact subsets such that~$\La'_{M'}\subset \La_M$. Then there exists surjective map $h:M\to M'$ with $\on{Lip}(h)\leq 2\lambda$. \\
Moreover, if $X=Y$ (and $\La=\La'$ and $b=b'$) and $M'\subset M$, then additionally $h(x)=x$ for $x\in M'$.
\end{lemma}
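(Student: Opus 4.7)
I plan to build $h$ by specifying, for each leaf index $\eta\in\tL_M$, an element $\pi(\eta)\in\tL'_{M'}$ and setting $h(x_\eta):=y_{\pi(\eta)}$; the map $\pi$ will factor as $\chi\circ\xi$ for a projection $\xi$ and a canonical extension $\chi$. First one checks that $\tL'_{M'}\subset\tL_M$: if $\zeta\in\tL'_{M'}$ is finite, it lies in $\La'_{M'}\subset\La_M$ and ends with $\omega$, hence has no children in $\La$ and so lies in $\tL_M$; if $\zeta$ is infinite, all its prefixes belong to $\La'_{M'}\subset\La_M$, so again $\zeta\in\tL_M$.

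For $\sigma\in\La'_{M'}\setminus\tL'_{M'}$, put $C(\sigma):=\{j\in\oN:\sigma\ha j\in\La'_{M'}\}$ and order $\oN$ so that $\omega$ exceeds every $n\in\N$; set $c_{\max}(\sigma):=\max C(\sigma)$. This maximum exists: if $C(\sigma)\cap\N$ is infinite, pick $y_j\in Y_{\sigma\ha j}\cap M'$ for each such $j$; by (iv) the resulting sequence is Cauchy and by (v) converges to $y_{\sigma\ha\omega}$, which lies in $M'$ by compactness, so $\omega\in C(\sigma)$; otherwise $C(\sigma)$ is finite. Define $\chi(\sigma):=\sigma$ for $\sigma\in\tL'_{M'}$ and $\chi(\sigma):=\chi(\sigma\ha c_{\max}(\sigma))$ otherwise; this \emph{rightmost descent} reaches (or converges to) an element of $\tL'_{M'}$ extending $\sigma$. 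Put $\xi(\eta):=\eta$ for $\eta\in\tL'_{M'}$, and for $\eta\in\tL_M\setminus\tL'_{M'}$ let $\xi(\eta)$ be the longest finite prefix of $\eta$ lying in $\La'_{M'}$ (which exists because some prefix of $\eta$ must leave $\La'_{M'}$). Set $\pi(\eta):=\chi(\xi(\eta))$ and $h(x_\eta):=y_{\pi(\eta)}$; surjectivity is immediate from $\pi(\zeta)=\zeta$ for $\zeta\in\tL'_{M'}$.

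For the Lipschitz estimate, take distinct $\eta,\eta'\in\tL_M$ with $\pi(\eta)\neq\pi(\eta')$, let $\gamma$ be their longest common prefix, and WLOG write $k=\eta_{|\gamma|+1}<\eta'_{|\gamma|+1}=k'$; let $\xi:=\xi(\eta)$, $\xi':=\xi(\eta')$, let $\rho$ be the common prefix of $\pi(\eta),\pi(\eta')$, and let $p,p'$ be the differing next characters. Standard applications of (iii), (iv) and (\ref{181b}) give
\[
d(x_\eta,x_{\eta'})\geq \tfrac{1}{2}b_{l(\gamma)+k-1},\qquad d(y_{\pi(\eta)},y_{\pi(\eta')})\leq b'_{l(\rho)+\min(p,p')-1},
\]
so using $b'_i\leq\lambda b_i$ and the monotonicity of $(b'_i)$, it suffices to prove
\[
l(\rho)+\min(p,p')\geq l(\gamma)+k. \qquad (\star)
\]
Since $\xi\preceq\eta=\gamma\ha k\ha\cdots$, $\xi$ is either a prefix of $\gamma$ or extends $\gamma\ha k$, and similarly for $\xi'$; moreover if $\xi'\succeq\gamma\ha k'$ then $\gamma\ha k'$, hence $\gamma$, lies in $\La'_{M'}$, which forces $\xi\succeq\gamma$. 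Up to swapping $\eta,\eta'$ one is left with three cases: (a) $\xi=\xi'\preceq\gamma$, which contradicts $\pi(\eta)\neq\pi(\eta')$; (b) $\xi\succeq\gamma\ha k$ and $\xi'\succeq\gamma\ha k'$, giving $\rho=\gamma$, $\min(p,p')=k$, and equality in $(\star)$; (c) $\xi=\gamma$ and $\xi'\succeq\gamma\ha k'$, in which case $k'\in C(\gamma)$ so $c_{\max}(\gamma)\geq k'$; depending on whether $c_{\max}(\gamma)=k'$ or $c_{\max}(\gamma)>k'$ one obtains $\rho\succeq\gamma\ha k'$ or $\rho=\gamma$ with $\min(p,p')=k'$, and in either subcase the left side of $(\star)$ is at least $l(\gamma)+k'\geq l(\gamma)+k$. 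The moreover part is then immediate: when $X=Y$, $\La=\La'$, $b=b'$ and $M'\subset M$, any $x_\zeta\in M'$ satisfies $\zeta\in\tL_{M'}\subset\tL_M$ and $\pi(\zeta)=\zeta$, whence $h(x_\zeta)=y_\zeta=x_\zeta$.

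The main obstacle is the correct choice of $\chi$. Natural alternatives, such as ``smallest child'' descent or a character-by-character walk through $\La'_{M'}$ that follows $\eta$ whenever possible, violate $(\star)$ in examples where a deep common prefix $\gamma$ of $\eta,\eta'$ lies outside $\La'_{M'}$: the walk ``drops'' to a much shorter prefix, later resumes distinguishing $\eta$ from $\eta'$, and produces $\pi(\eta)\neq\pi(\eta')$ whose images are far apart compared with the now very small $d(x_\eta,x_{\eta'})$. Rightmost descent remedies this because, whenever it has to branch off from the $\eta$-direction at some node $\gamma\in\La'_{M'}$, the chosen index $c_{\max}(\gamma)$ is at least $k$, which is exactly what $(\star)$ requires.
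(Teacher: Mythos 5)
Your proof is correct and follows essentially the same route as the paper: your rightmost-descent map $\chi$ is the paper's map $T$ (which extends a node of $\La'_{M'}$ by the ``biggest possible'' next index), your $\xi(\eta)$ is the paper's truncation $[\eta]_j$ to the longest prefix lying in $\La'_{M'}$, and your case analysis for $(\star)$ matches the paper's argument that either both $\gamma\ha k,\gamma\ha k'$ lie in $\La'_{M'}$ or the maximality of the chosen index forces the images to branch at an index $\geq k$. The only difference is cosmetic: you additionally justify that $c_{\max}$ exists (via compactness of $M'$), a detail the paper leaves implicit.
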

\begin{proof}
At first, fix a map $T:\La'_{M'}\cup\tL'_{M'}\to\tL'_{M'}$ such that $\eta\preceq T(\eta)$ and if $\eta\notin \tL'_{M'}$, then the value $T(\eta)_{|\eta|+1}$ is biggest possible.\\
 Now for every $\eta\in\tL_M$, let $j:=\sup\{k:[\eta]_k\in\La'_{M'}\}$, and define $R(\eta):=T([\eta]_j)$, where we assume $[\eta]_\omega=\eta$. By assumptions and a fact that $T(\eta)=\eta$ for $\eta\in \tL'_{M'}$, it is easy to see that:
\begin{itemize}
\item[(a)] $R(\eta)=\eta$ for $\eta\in\tL'_{M'}\cap \tL_M$;
\item[(b)] $R(\tL_{M})=\tL'_{M'}$.
\end{itemize}
Finally, set $h:M\to M'$ by $h(x_\eta):=y_{R(\eta)}$, $\eta\in\tL_M$. By Observation \ref{1819}, $h$ is well defined, by (b), $h(M)=M'$, and by (a), if $X=Y$ and $M'\subset M$, we have that $h(x)=x$ for $x\in M'$. It remains to prove that $\on{Lip}(h)\leq 2\lambda$.\\
Choose any $\eta,\eta'\in\tL_{M}$ so that $R(\eta)\neq R(\eta')$. We can assume that there exist a~sequence $\xi$ and $k<k'\leq\omega$ such that $\xi\ha k\preceq \eta$ and $\xi\ha k'\preceq \eta'$. Observe that $\xi\ha k$ or $\xi\ha k'$ belongs to $\La'_{M'}$. Indeed, otherwise
$$
j:=\max\{i:[\eta]_i\in\La'_{M'}\}=\max\{i:[\eta']_i\in\La'_{M'}\}\leq|\xi|
$$ 
and then $R(\eta)=T([\eta]_j)=T([\eta']_j)=R(\eta')$, which contradicts our assumption.\\
Assume first that $\xi\ha k\in\La'_{M'}$. Then $R(\eta)=T([\eta]_j)\succeq [\eta]_j\succeq \xi\ha k$. Now if $\xi\ha k'\in \La'_{M'}$, then also $R(\eta')\succeq \xi\ha k'$.\\
If $\xi\ha k\notin \La'_{M'}$, then by definition of the map $T$ and a fact that $\xi\ha k\in\La'_{M'}$, there is $k\leq l\leq\omega$ such that $R(\eta')=T(\xi)\succeq \xi \ha l$. In both cases, we have
$h(x_\eta),h(x_{\eta'})\in \bigcup_{k\leq j\leq \omega}Y_{\xi\ha j}$ and hence
$$
d(h(x_\eta),h(x_{\eta'}))\leq\on{diam}{\Bigg(}\bigcup_{k\leq j\leq \omega}Y_{\xi\ha j}{\Bigg)}\leq b'_{(\xi)+k-1}.
$$
On the other hand, 
$$
d(x_\eta,x_{\eta'})\geq\on{dist}(X_{\xi\ha k},X_{\xi\ha k'})\geq b_{l(\xi)+k-1}-2b_{l(\xi)+k}-b_{l(\xi)+k+1}\geq \frac{1}{2}b_{l(\xi)+k-1}.
$$ 
Hence 
$$d(h(x_\eta),h(x_{\eta'})\leq 2\frac{1}{2}b_{l(\xi)+k-1}\frac{b'_{l(\xi)+k-1}}{b_{l(\xi)+k-1}}\leq 2\lambda d(x_\eta,x_{\eta'}).$$
In the case when $\xi\ha k'\in \La'_{M'}$, we proceed in similar way.
\end{proof}

\subsection{Scattered spaces as $(\La,b,s)$-spaces}\label{s31}
A topological space $X$ is called \emph{scattered} if every its nonempty subspace $Y$ has an isolated point in $Y$. It is well known that a compact metrizable space is scattered iff it is countable (it follows from the Cantor-Bendixson theorem and a fact that perfect sets are uncountable). 

For a scattered space $X$, define
$$X' := \{x\in X : x \textrm{ is an accumulation point of } X\}.$$
Then $X'$ is called \emph{the~Cantor-{Bendixson} derivative of $X$}. For each ordinal $\alpha$, we can define the Cantor-{Bendixson} $\alpha$-th derivative $X^{(\alpha)}$, according to the inductive procedure:
\begin{itemize}
\item[$\bullet$] $X^{(\alpha+1)} := \left(X^{(\alpha)}\right)'$;
\item[$\bullet$] $X^{(\alpha)} := \bigcap_{\beta < \alpha} X^{(\beta)}$ for a limit ordinal $\alpha$.
\end{itemize}
(the definition is correct, as sets $X^{(\alpha)}$ are empty from some level).
Then we can define \emph{the {scattered} height} of a scattered space $X$, by 
$$\on{ht}(X) := \min\{\alpha: X^{(\alpha)} \textrm{ is discrete}\}.$$
The classical Mazurkiewicz--Sierpi\'nski theorem (see \cite{MS}), states that every countable compact scattered space~$X$ is homeomorphic to the space $\omega^\beta \cdot n + 1$ with the order topology, where $\beta = \on{ht}(X)$ and~$n~=~\on{card}X^{(\beta)}$. In particular each countable compact scattered spaces with the same height and the same cardinality of elements with the ``highest rank'' are homeomorphic. We call a scattered space \emph{unital}, if $X^{(\on{ht}(X))}$ is a singleton.

Now we show that each compact metrizable scattered space is homeomorphic to certain $(\La,b,s)$-space.

Let $\delta_0$ be a countable limit ordinal, 
and $\{c_n^{\delta_0}(\beta): n\in\N, \beta\leq \delta_0,\;\beta\mbox{ is limit}\}$ be a monotone ladder system in $\delta_0$, that is, a family of ordinals that satisfies
\begin{itemize}
\item[1.] for each limit ordinal $\beta \leq \delta_0$, the sequence $\{c_n^{\delta_0}(\beta)\}_{n\in\N}$ is strictly increasing and converges to $\beta$;
\item[2.] for every limit $\beta, \gamma \leq \delta_0$, if $\beta \leq \gamma$ then $c_n^{\delta_0}(\beta) \leq c_n^{\delta_0}(\gamma)$ for every $n\in\N$.
\end{itemize}
(the proof of its existance is given in \cite{N}). Now for every ordinal $\alpha \leq \delta_0$, define the sequence $(\alpha_n)$ such that:
\begin{itemize}
\item[$\circ$] if $\alpha$ is a limit ordinal, then $\alpha_n:=c_n^{\delta_0}(\alpha)$;
\item[$\circ$] if $\alpha$ is a successor ordinal and $\alpha=\alpha'+1$, then $\alpha_n:=\min\{c_n^{\delta_0}(\alpha+\omega),\alpha'\}$.
\end{itemize}
Clearly,
\begin{itemize}
\item[(a)] for every $\alpha\leq\delta_0$, $(\alpha_n+1)$ is nondecreasing and converges to $\alpha$;
\item[(b)] for every $\alpha\leq\beta\leq\delta_0$ and every $n\in\N$, $\alpha_n\leq\beta_n<\beta$.
\end{itemize}

Now we will define a family $\Lambda^\alpha$, $\alpha\leq\delta_0$, of proper trees.
Definition will be recursive. At first, let $\Lambda^0:=\{\emptyset\}$. Assume that for some $\alpha\leq\delta_0$, all sets $\Lambda^\beta$, $\beta<\alpha$, are defined. Then define
$$\Lambda^\alpha:=\{\emptyset,\omega\}\cup\bigcup_{k\in\N}k\ha\Lambda^{\alpha_k}=\bigcup_{k\in\N}\{k\hat{\;}\eta:\eta\in\Lambda^{\alpha_k}\} \cup \{\emptyset,\omega\},$$
where $k\ha A:=\{k\ha a:a\in A\}$.

The following proposition lists basic properties of sets $\Lambda^\alpha$. 
 It can be easily proved by  induction and with a help of (a) and (b).
\begin{proposition}\label{1}
If $\alpha\leq\beta\leq\delta_0$, then
\begin{itemize}
\item[(i)] $\tL^\beta=\{\omega\}\cup\bigcup_{k\in\N}k\ha\tL^{\beta_k}$;
\item[(ii)] $\tilde{\Lambda}^\beta$ consists of finite sequences, that is, $\tL^\beta\subset\La^\beta$;
\item[(iii)] $\La^\beta$ is proper;
\item[(iv)] $\La^\alpha\subset \La^\beta$.
\end{itemize}
\end{proposition}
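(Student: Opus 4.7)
The plan is to establish all four properties by transfinite induction on $\beta\le\delta_0$, treating (i)--(iii) simultaneously in one induction and then (iv) in a second analogous one. The key engine in both is property (b): since $\beta_k<\beta$ for every $k\in\N$, the recursive definition $\La^\beta=\{\emptyset,\omega\}\cup\bigcup_{k\in\N}k\ha\La^{\beta_k}$ genuinely reduces everything to strictly smaller ordinals, where the inductive hypothesis is available. The base case $\beta=0$ is trivial since $\La^0=\tL^0=\{\emptyset\}$.

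For the inductive step of (i)--(iii), I would first unfold the definition of the boundary. A sequence in $\tL^\beta$ is either a finite element of $\La^\beta$ with no extension in $\La^\beta$, or an infinite sequence all of whose initial segments lie in $\La^\beta$. Since no element of $\La^\beta$ other than $\omega$ itself begins with $\omega$, one sees directly that $\emptyset$ is extendable (by $\omega$), that $\omega$ is maximal, and that for $k\in\N$ one has $k\ha\eta\in\tL^\beta$ iff $\eta\in\tL^{\beta_k}$; this yields (i). For (ii), any infinite element of $\tL^\beta$ would have to begin with some $k\in\N$ (its first coordinate cannot be $\omega$, as no length-$2$ sequence of $\La^\beta$ starts with $\omega$), and its one-step shift would then witness an infinite branch of $\tL^{\beta_k}$, contradicting the inductive hypothesis. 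Properness (iii) then follows: (pi) is immediate from the description of $\tL^\beta$ in (i), and (pii) reduces to the observation that $\La^\beta\setminus\tL^\beta$ consists of $\emptyset$ (trivially extendable in every direction) together with sequences $k\ha\eta$ with $\eta\in\La^{\beta_k}\setminus\tL^{\beta_k}$, which extend in every direction by the inductive hypothesis applied to $\beta_k$.

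For (iv) I would run a separate transfinite induction on $\beta$. Given $\xi\in\La^\alpha$ with $\alpha\le\beta$, if $\xi\in\{\emptyset,\omega\}$ there is nothing to show; otherwise $\xi=k\ha\eta$ with $\eta\in\La^{\alpha_k}$, and by (b) we have $\alpha_k\le\beta_k<\beta$, so the inductive hypothesis yields $\La^{\alpha_k}\subset\La^{\beta_k}$ and hence $\xi\in\La^\beta$. The only mildly delicate point in the whole argument is (ii): ruling out infinite branches in $\tL^\beta$ rests on the observation that in $\La^\beta$ only the length-one sequence $\omega$ starts with $\omega$, so any hypothetical infinite branch must enter one of the subtrees $k\ha\La^{\beta_k}$ of strictly smaller rank, where the inductive hypothesis forbids it.
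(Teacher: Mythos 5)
Your proposal is correct and matches the paper's intended argument: the paper gives no detailed proof, stating only that the proposition ``can be easily proved by induction and with a help of (a) and (b),'' and your transfinite induction on $\beta$, driven by the fact $\alpha_k\leq\beta_k<\beta$ from (b), is precisely that argument filled in. The one cosmetic remark is that part (i) only makes sense for $\beta\geq 1$ (the sequence $(\beta_k)$ is not defined for $\beta=0$), but your base case handles (ii)--(iv) there and the inductive step is sound.
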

Finally, set $\La^{\alpha,1}:=\La^\alpha$ and for $n\geq 2$, define $\La^{\alpha,n}$ by
$$
\La^{\alpha,n}:={\Bigg(}\bigcup_{1\leq i\leq n-1}i\ha\La^\alpha{\Bigg)}\cup{\Bigg(}\bigcup_{i\geq n}i\ha\La^{\alpha_{i-n+1}}{\Bigg)}\cup\{\emptyset,\omega\}
$$ 
Clearly, $\Lambda^{\alpha,n}$ is proper tree. 
As $\delta_0$ was taken as arbitrary countable ordinal, the following result shows that all compact scattered spaces are certain $(\La,b,s)$-spaces.
\begin{proposition}\label{189}
If $b$ is a good sequence and $\alpha\leq\delta_0$, then $(\La^{\alpha,n},b,s)$-space is homeomorphic to $\omega^\alpha\cdot n+1$.
\end{proposition}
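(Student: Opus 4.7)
My strategy is to invoke the Mazurkiewicz--Sierpi\'nski classification recalled in Section~\ref{s31}: every countable compact scattered metrizable space $Y$ is determined up to homeomorphism by the pair $(\on{ht}(Y),\on{card}Y^{(\on{ht}(Y))})$. The $(\La^{\alpha,n},b,s)$-space $X$ is compact and metrizable by construction, and countable because $\tL^{\alpha,n}$ is countable; a countable compact Hausdorff space is automatically scattered (any non-empty perfect subset would be uncountable by the Baire category argument). Hence it suffices to prove $\on{ht}(X)=\alpha$ and $\on{card} X^{(\alpha)}=n$.

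I first reduce to the case $n=1$. From the definition of $\La^{\alpha,n}$, the subtrees rooted at $(1),\dots,(n-1)$ are copies of $\La^\alpha$, while a direct computation using the formula for $\La_2[(n)]$ in Lemma~\ref{1822}(e) yields $\La_2[(n)]=\bigcup_{i\in\N}i\ha\La^{\alpha_i}\cup\{\emptyset,\omega\}=\La^\alpha$. Parts~(a), (d), (e) of Lemma~\ref{1822} then decompose $X$ into $n$ pairwise disjoint clopen pieces, each a $(\La^\alpha,b',s)$-space for an appropriate tail $b'$ of $b$; by Lemma~\ref{1822}(f) they are all homeomorphic to the canonical $(\La^\alpha,b,s)$-space. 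On the ordinal side, for every $k\in\{1,\dots,n\}$ the interval $[\omega^\alpha\cdot(k-1)+1,\omega^\alpha\cdot k]$ (with left endpoint replaced by $0$ for $k=1$) is clopen in $\omega^\alpha\cdot n+1$ and has order type $\omega^\alpha+1$ by the absorption identity $\omega^\alpha\cdot(k-1)+1+\omega^\alpha=\omega^\alpha\cdot k$ (valid since $\omega^\alpha$ is a limit for $\alpha\geq 1$). Hence $\omega^\alpha\cdot n+1$ is likewise a disjoint union of $n$ clopen copies of $\omega^\alpha+1$, and the $n\geq 2$ case follows from the $n=1$ case.

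For $n=1$ I argue by transfinite induction on $\alpha$. By Lemma~\ref{1822}(d), each clopen piece $X_{(k)}$ of $X=(\La^\alpha,b,s)$-space is a $(\La^{\alpha_k},b^1,s)$-space, so by the inductive hypothesis it is homeomorphic to $\omega^{\alpha_k}+1$ and has scattered height exactly $\alpha_k<\alpha$. A direct check shows that, when $A$ is clopen in $X$, one has $A^{(\beta)}=A\cap X^{(\beta)}$ for every ordinal $\beta$, so Cantor--Bendixson ranks are preserved; in particular no point of $\bigsqcup_k X_{(k)}$ can lie in $X^{(\alpha)}$, giving $X^{(\alpha)}\subseteq\{x_\omega\}$. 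For the reverse inclusion I would argue by an inner transfinite induction on $\beta\leq\alpha$ that $x_\omega\in X^{(\beta)}$; the only non-trivial successor step $\beta+1\leq\alpha$ uses properties~(a) and (b) of the ladder system to ensure $\alpha_k\geq\beta$ for all sufficiently large $k$, and then the outer inductive hypothesis supplies points $z_k\in X_{(k)}^{(\alpha_k)}\subseteq X_{(k)}^{(\beta)}\subseteq X^{(\beta)}$ with $d(z_k,x_\omega)\leq b_{k-1}\to 0$ by Definition~\ref{183}(iv)--(v); this forces $x_\omega\in X^{(\beta+1)}$. Limit steps are immediate from the definition of $X^{(\beta)}$ as an intersection. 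Combining both inclusions, $X^{(\alpha)}=\{x_\omega\}$ is non-empty and discrete, hence $\on{ht}(X)=\alpha$ and $\on{card} X^{(\alpha)}=1$, and Mazurkiewicz--Sierpi\'nski concludes.

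The main delicate point will be handling the limit versus successor dichotomy for $\alpha$ uniformly when exploiting the ladder values $\alpha_k$: for a limit $\alpha$ the sequence $(\alpha_k)$ is strictly increasing and cofinal, whereas for $\alpha=\alpha'+1$ a successor it is eventually equal to $\alpha'$, yet in both cases the constraint $\beta+1\leq\alpha$ translates into $\alpha_k\geq\beta$ for large $k$, which is exactly what the inner induction requires. A secondary bookkeeping point is that the pieces in the reduction to $n=1$ carry the good sequences $b,b^1,\ldots,b^{n-1}$ rather than $b$ itself, but Lemma~\ref{1822}(f) absorbs this discrepancy.
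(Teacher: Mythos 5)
Your proposal is correct and follows essentially the same route as the paper: reduce to $n=1$ by splitting the $(\La^{\alpha,n},b,s)$-space into the clopen pieces $X_1,\dots,X_{n-1},\tilde X_n$, each a $(\La^\alpha,b^i,s)$-space, and then handle $n=1$ by transfinite induction on $\alpha$ using Lemma~\ref{1822}(d). The paper leaves the $n=1$ induction as a ``simple inductive argument''; your implementation via Cantor--Bendixson ranks and the Mazurkiewicz--Sierpi\'nski classification is a legitimate and carefully checked way to carry it out.
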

\begin{proof}
We first assume $n=1$. Then the thesis holds by a simple inductive argument and the fact that if $X$ is a $(\La^\alpha,b,s)$-space, then for every $k\in\N$, $X_k$ is a $(\La^{\alpha_k},b^k,s)$-space (see Lemma \ref{1822}{(d)}). Hence take $(\La^{\alpha,n},b,s)$-space $X$. 
Then from the definition it can be easily seen (see again Lemma \ref{1822}{(d)}) that for $i=1,{\dots},n-1$, the space $X_i$ is a $(\La^\alpha,b^i, s)$-space and that the space $\tilde{X}_n:=\bigcup_{n\leq i\leq\omega}X_i$ is a~$(\La^\alpha,b^{n-1},s)$-space. Since $X_1,{\dots},X_{n-1},\tilde{X}_n$ are pairwise disjoint and open, the result follows.
\end{proof}

\begin{remark}
{Nowak in \cite{N} considered a particular case of $(\La^\alpha,b,s)$-spaces. Namely, take $r>3$ and for each $n\in\N$, define the affine homeomorphism $s_n$ by
$$s_n(x):=\frac{x}{r^n} + \frac{1}{r^n}.$$
Finally, construct the scattered compact sets $L_\alpha \subset [0,1]$, $\alpha\leq \delta_0$, in the following recursive way:
\begin{equation}\label{L}
\left.\begin{array}{ll}
\textrm{1. } L_0 := \{0\}; \\
\textrm{2. } L_\alpha := L_0 \cup \bigcup_{n=1}^\infty s_n(L_{\alpha_n}).
 \end{array}\right.
\end{equation}
Notice that $L_\alpha$ is a $(\La^\alpha,b,s)$-space for a sequence $b=\left(\frac{2}{r^{k+1}}\right)_{k\in \N\cup\{0\}}$ with sufficiently large $r$. From the~perspective of countable compact spaces and Section 4, Nowak's construction would be sufficient. 
However, in next sections we will need our, more flexible, setting.}
\end{remark}

\subsection{Compact $0$-dimensional uncountable spaces as $(\La,b,s)$-spaces} 

In this section we show that each compact metrizable uncountable $0$-dimensional space is homeomorphic to a certain subset of a $(\La_{\max},b,s)$-space. Note that if $Y$ is uncountable, compact, metrizable and $0$-dimensional, by the Cantor-{Bendixson} theorem, there exists a maximal perfect subset $\iY$ (called a perfect kernel) which is a Cantor space.\\
Define two subtrees of $\La_{\max}$
\begin{equation}\begin{array}{c}\label{1813}\La_{s}:=\{\eta\in\La_{\max}:\eta\mbox{ does not contain }1\}\\\mbox{ and }\\\La_r:=\{\eta\in\La_{\max}:\eta\mbox{ contains at most one }1\}\end{array}
\end{equation}
Observe that
\begin{equation}\begin{array}{c}\label{1813'}\tL_{s}:=\{\eta\in\tL_{\max}:\eta\mbox{ does not contain }1\}
\\\mbox{ and }\\\tL_r:=\{\eta\in\tL_{\max}:\eta\mbox{ contains at most one }1\}\end{array}
\end{equation}
\begin{proposition}\label{1813}
Let $b$ be a good sequence, $Y$ be an uncountable $0$-dimensional compact metrizable space and $y_0\in Y^{(\infty)}$. Then for every $(\La_{\max},b,s)$-space $X$, there is a compact set $M\subset X$ such that 
\begin{equation}\label{1812}
   \La_s\subset\La_M\subset\La_r
\end{equation}
and a homeomorphism $h:Y\to M$ such that $h(y_0)=x_\omega$. 
\end{proposition}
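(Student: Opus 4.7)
The plan is to realize $M$ by encoding the points of $Y$ combinatorially as paths in $\La_r$, with the perfect kernel $Y^{(\infty)}$ encoded by $\tL_s$ (paths with no ``$1$'') and the scattered part $S := Y\setminus Y^{(\infty)}$ encoded by paths with exactly one ``$1$''. Since $Y$ is uncountable, compact, metrizable and $0$-dimensional, $Y^{(\infty)}$ is a nonempty Cantor set and $S$ is countable. Moreover, the subset $C := \{x_\eta : \eta \in \tL_s\} \subset X$ inherits, by an argument analogous to Lemma~\ref{1822}(d) applied to the subtree $\La_s$, the structure of a $(\La_s, b, s)$-space; in particular $C$ is compact, perfect, $0$-dimensional and metrizable, hence a Cantor set. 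Using homogeneity of the Cantor set, I would first fix a homeomorphism $\phi : Y^{(\infty)} \to C$ with $\phi(y_0) = x_\omega$.

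To build $h$ on all of $Y$, I would recursively construct a system $\{V_\eta : \eta \in \La_M\}$ of clopen subsets of $Y$ mirroring a tree $\La_M$ assembled along the way, with $\La_s \subset \La_M \subset \La_r$: namely $V_\emptyset = Y$; for every $\eta \in \La_M \setminus \tL_M$, the family $\{V_{\eta\ha k} : \eta\ha k \in \La_M\}$ is a clopen partition of $V_\eta$; for $\eta \in \La_s$, $V_\eta \cap Y^{(\infty)} = \phi^{-1}(X_\eta \cap C)$; and $\on{diam}(V_\eta) \to 0$ along every path. At each node $\eta \in \La_s$ the ``$1$''-child $V_{\eta\ha 1}$ is used to peel off those points of $S \cap V_\eta$ that do not fit into the next Cantor refinement of $V_\eta \cap Y^{(\infty)}$; inside $V_{\eta\ha 1}$, no further ``$1$''-branch is used, so the subsequent refinement keeps all codes in $\tL_r$. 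For $y \in Y$, letting $\rho(y) \in \tL_M$ be the unique path with $y \in \bigcap_k V_{[\rho(y)]_k}$, one defines $h(y) := x_{\rho(y)}$ and $M := h(Y)$.

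The main difficulty is the coordination of the refinement steps, which must achieve three things simultaneously: (a) $V_\eta \neq \emptyset$ for every $\eta \in \La_s$ (giving $\La_s \subset \La_M$), which is automatic from $V_\eta \supset \phi^{-1}(X_\eta \cap C) \neq \emptyset$; (b) every $p \in S$ enters some ``$1$''-branch $V_{\eta\ha 1}$ at a finite depth and is then never routed through a further ``$1$'' (giving $\La_M \subset \La_r$), which can be arranged by induction on the Cantor--Bendixson rank of $p$ in $S$, using that each $p \in S$ has a clopen neighborhood in $Y$ disjoint from $Y^{(\infty)}$ and that $Y^{(\infty)}$ has arbitrarily small clopen partitions; (c) continuity of $h$, which follows from the shrinking diameters: if $y_n \to y$ in $Y$, then for each $k$ eventually $y_n \in V_{[\rho(y)]_k}$, so the codes $\rho(y_n)$ agree with $\rho(y)$ on longer and longer prefixes, whence by the $(\La,b,s)$-structure of $X$ (Definition~\ref{183}(iv)) we get $x_{\rho(y_n)} \to x_{\rho(y)}$. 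Since $Y$ is compact and $X$ Hausdorff, the continuous bijection $h: Y \to M$ is a homeomorphism, $M = h(Y)$ is compact, and $\La_s \subset \La_M \subset \La_r$ with $h(y_0) = x_{(\omega)} = x_\omega$ hold by construction.
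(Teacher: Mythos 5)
Your strategy is genuinely different from the paper's and is, in outline, viable. The paper also starts by identifying $C_s:=\{x_\eta:\eta\in\tL_s\}$ and $C_r:=\{x_\eta:\eta\in\tL_r\}$ as Cantor sets, but then avoids any recursive coding: it observes that $C_s$ has empty interior relative to $C_r$ and proves a general claim --- for Cantor sets $C_1\subset C_2$ with $\on{Int}_{C_2}(C_1)=\emptyset$ there is an embedding $h:Y\to C_2$ with $h(\iY)=C_1$ and prescribed value at $y_0$ --- by a Baire category argument in the complete space $D$ of continuous maps $Y\to C_2$ extending a fixed homeomorphism $\iY\to C_1$ ($D\neq\emptyset$ because $\iY$ is a retract of the $0$-dimensional space $Y$); a generic element of $D$ is injective and sends $Y\setminus\iY$ into $C_2\setminus C_1$, which gives $\La_s\subset\La_M\subset\La_r$ at once. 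That approach buys a short proof at the price of losing explicit control of the codes; your direct Cantor-scheme construction buys an explicit description of $\La_M$ but puts all the work into the recursive step.

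That recursive step is where your write-up is not yet a proof, and it contains two concrete inaccuracies. First, for $\eta\in\La_s$ every child $\eta\ha k$ with $k\geq 2$ and the child $\eta\ha\omega$ must receive nonempty sets, so $\{V_{\eta\ha k}:\eta\ha k\in\La_M\}$ cannot be a \emph{clopen} partition of the compact set $V_\eta$: a compact space admits no partition into infinitely many nonempty open pieces, so $V_{\eta\ha\omega}$ has to be a non-open singleton at which the clopen sets $V_{\eta\ha k}$, $k\in\N$, accumulate, mirroring Definition~\ref{183}(ii)--(v). Second, your continuity argument (``eventually $y_n\in V_{[\rho(y)]_k}$'') uses openness of $V_{[\rho(y)]_k}$ and therefore breaks exactly at these $\omega$-points; there one must instead use Definition~\ref{183}(iv)--(v) to see that $x_{\rho(y_n)}\to x_{\eta\ha\omega}$ when the indices of the children containing $y_n$ tend to infinity. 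Finally, the heart of the matter --- that one can simultaneously (i) extend the prescribed traces $\phi^{-1}(X_\xi\cap C)$ to a nested system of clopen subsets of $Y$, (ii) keep diameters shrinking along every branch, and (iii) code each leftover countable compact set $V_{\eta\ha 1}$ by a $1$-free subtree with the same node structure --- is asserted rather than carried out; note that your item (b) does not need an induction on Cantor--Bendixson rank, since once diameters shrink along every branch, any $p\in S$ is automatically expelled into the $1$-child at the first level where $\on{diam}(V_\eta)<\on{dist}(p,\iY)$. All of this is repairable, but as written the proposal is a plan rather than a complete argument.
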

Before we give the proof, let us observe that (\ref{1812}) implies
\begin{equation}\label{1812'}
   \tL_s\subset\tL_M\subset\tL_r
\end{equation}
\begin{proof}
Let $X$ be a $(\La_{\max},b,s)$-space and define
$$
C_s:=\{x_\eta:\eta\in\tL_s\}\;\;\;\mbox{and}\;\;\;C_r:=\{x_\eta:\eta\in\tL_r\}.
$$ Clearly, $\La_{C_s}=\La_s$ and $\La_{C_r}=\La_r$. Also, $C_s,C_r$ are closed in $X$, as 
$$\begin{array}{c}C_s=X\setminus \bigcup\{X_\eta:\eta\in\La_{\max},\;\textrm{$\eta$ ends with $1$}\} \\\textrm{  and  }\\ C_r=X\setminus \bigcup\{X_\eta:\eta\in\La_{\max}, \;\textrm{$\eta$ ends with $1$ and contains two $1$'s}\}\end{array}$$ and the sets which we remove are open. Moreover, it is easy to see that $C_s,C_r$ are also perfect, hence they are Cantor spaces. 
Finally, $C_s\subset C_r$ and $C_s$ has empty interior with respect to $C_r$. Indeed, take $x=x_\eta\in C_s$. If $\eta=\tilde{\eta}\ha\omega$, then for every $k\geq 2$, $x_k:=x_{\tilde{\eta}\ha k\ha 1\ha(2,2,{\dots})}\in C_r\setminus C_s$ and $x_k\to x$, and if $\eta$ is infinite, then for every $k\in\N$, $x_k:=x_{[\eta]_k\ha 1\ha(2,2,{\dots})}\in C_r\setminus C_s$ and $x_k\to x$.\\
Hence the result follows from the following Claim (whose version we also prove in \cite{BNS2}; we give a proof for the sake of completeness):\\
\textbf{Claim}. Assume that $C_1,C_2$ are Cantor spaces such that $C_1\subset C_2$ and $\on{Int}(C_1)=\emptyset$. Then there exists a homeomorphic embedding $h:Y\to C_2$ such that $h(\iY)=C_1$ and $h(y_0)=x_0$, where $y_0\in \iY$ and $x_0\in C_1$ are initially chosen.
\newline
Since $\iY$ and $C_1$ are Cantor spaces, there is a homeomorphism $h:\iY\to C_1$ such that $h(y_0)=x_0$.
Consider a family $$D:=\{f:Y\to C_2:f\;\mbox{is continuous and}\;f_{\vert\iY}=h\}$$ and consider it as a metric space with the supremum metric. Let us note that $D$ is nonempty as the set $\iY$ is a retract of zero dimensional space $Y$ (see \cite[Theorem 7.3]{Ke}), so there exists a continuous retraction $r:Y\to\iY$, and $h\circ r\in D$. Now let $z_1,z_2,{\dots}$ be all elements of (clearly countable) set $Y\setminus\iY$ (in the case when $Y\setminus \iY$ is finite, we can proceed in similar, but simpler way), and for every $n,m\in\N$, let
$$
U_n:=\{f\in D:f(z_n)\notin C_1\},\;\;U_{n,m}:=\{f\in D:f(z_n)\neq f(z_m)\}.
$$
Observe that any map $f\in\bigcap_{n,m\in\N}(U_n\cap U_{n,m})$ satisfies the thesis of the Claim. Hence, in view of~completeness of $D$, it is enough to prove that all $U_n$ and $U_{n,m}$ are open and dense. Openness of these sets is obvious. We will show that $U_n$ is dense. Take $f\in D$ and $\varepsilon>0$, and choose $y_0\in C_2\setminus C_1$ so that $d(f(z_n),y_0)<\varepsilon$ (where $d$ is an original metric on $C_2$). Then take a clopen set $V\subset Y \setminus \iY$ such that $z_n\in V$ and $f(V)\subset B(f(z_n),\frac{\varepsilon}{2})$, and define
$$
\tilde{f}(x):=\left\{\begin{array}{ccc}f(x)&\mbox{if}&x\notin V,\\
y_0&\mbox{if}&x\in V.\end{array}\right.
$$
As $V$ is clopen and $V\cap \iY=\emptyset$, $\tilde{f}$ is continuous and therefore $\tilde{f}\in D$. Moreover, for every $x\in Y$,\\
- if $x\notin V$, then $d(f(x),\tilde{f}(x))=0$;\\
- if $x\in V$, then $d(f(x),\tilde{f}(x))=d(f(x),y_0)\leq d(f(x),f(z_n))+d(f(z_n),y_0)<\varepsilon$.
Hence the supremum metric $d_s(f,\tilde{f})\leq\varepsilon$ and $U_n$ is dense. Similarly we prove the density of $U_{n,m}$. Claim is proved.
\end{proof}

\section{$0$-dimensional compact metrizable spaces as attractors of Banach GIFSs of order $2$}
The main result of this section is the following:
\begin{theorem}\label{scattered}
Let $X$ be a compact metrizable $0$-dimensional space and $\lambda\in(0,1)$. Then $X$ is homeomorphic to the attractor of a GIFS $\F$ of order $2$ on the real line with $\on{Lip}(\F)\leq\lambda$.\\
Moreover, if $X$ is scattered, then $\F$ can consist of two maps.
\end{theorem}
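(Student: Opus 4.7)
The plan is to realize $X$ (or a homeomorphic copy of it) as a suitable subset of a $(\La,b,s)$-space sitting inside $\R$, for a carefully chosen good sequence $b$, build a Banach GIFS of order~$2$ on that subset using the machinery of Section~3, and then extend the maps to $\R^2\to\R$ via Lemma~\ref{nl2}. Throughout I would choose $b$ with $M_b$ so small that every Lipschitz constant that appears is at most $\lambda/\sqrt{2}$, so that after the Kirszbraun--Valentine extension of Lemma~\ref{nl2} the final Lipschitz constant is at most $\lambda$. The finite case is trivial, so I may assume $X$ is infinite.

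First I would handle the scattered unital case $X\cong\omega^\alpha+1$. By Proposition~\ref{189}, $X$ is homeomorphic to a $(\La^\alpha,b,s)$-space; by Lemma~\ref{1822}(d) each piece $X_k$ is a $(\La^{\alpha_k},b^k,s)$-space, and $\La^{\alpha_k}\subset\La^\alpha$ by Proposition~\ref{1}(iv). Lemma~\ref{1816} therefore produces, for every $k\in\N$, a surjection $h_k:X\to X_k$ with $\on{Lip}(h_k)\leq 2M_b^k$. Remark~\ref{1814a} then lets me feed $\{h_k\}_{k\geq 2}$ into Lemma~\ref{1814} to get a single map $F:X^2\to X$ with image $\bigcup_{k\geq 2}X_k\cup\{x_\omega\}=X\setminus X_1$ and $\on{Lip}(F)\leq\lambda_b/2$. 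The missing piece $X_1$ is covered by $G(x,y):=h_1(x)$, which has $\on{Lip}(G)\leq 2M_b$; the pair $\{F,G\}$ is then a Banach GIFS of order~$2$ on $X$ with attractor $X$. Embedding $X$ into $\R$ via Theorem~\ref{fvf3}(1) and extending by Lemma~\ref{nl2} finishes this case with two maps.

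For the scattered case $X\cong\omega^\alpha\cdot n+1$ with $n\geq 2$, the same two-map idea adapts, using the decomposition $X=X_1\cup\cdots\cup X_{n-1}\cup\tilde X_n$ from the proof of Proposition~\ref{189}. The subtlety is that $\La^\alpha$ is not literally a subtree of $\La^{\alpha,n}$, so I would either re-embed $X$ inside a $(\La_{\max},b,s)$-space (where the tree inclusions required by Lemma~\ref{1816} hold trivially) and construct the $h_k$ there, or assemble the $h_k$ from the unital construction together with the canonical similitudes identifying the $X_i$ furnished by Proposition~\ref{189}.

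For the uncountable $0$-dimensional case, I would invoke Proposition~\ref{1813} to identify $X$ with a compact set $M\subset Y$, where $Y$ is a $(\La_{\max},b,s)$-space and $\La_s\subset\La_M\subset\La_r$. The key observation is that this sandwich condition is self-similar: each sub-branch tree $\La_{M_k}$ with $M_k=M\cap X_k$ again lies between $\La_s$ and $\La_r$ (with the role of the single allowed ``$1$'' shifted according to whether $k=1$), which is precisely what is needed to apply Lemma~\ref{1816} recursively and build surjections $M\to M_k$ with Lipschitz constants controlled by $M_b^k$. Combining them via Lemma~\ref{1814} together with a finite collection of auxiliary maps of the form $(x,y)\mapsto h(x)$ to handle the ``$k=1$''-branch (which is $\La_s$-shaped rather than $\La_r$-shaped) yields a Banach GIFS of order~$2$ on $M$; since the theorem claims only order~$2$ (not two maps) in the uncountable case, this is sufficient. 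The hard part will be this uncountable case: organising the auxiliary maps so that the $\La_r$-asymmetry does not force either the order or the total number of maps to blow up. Once the GIFS is in place on $M$, the embedding $M\subset Y\subset\R$ and Lemma~\ref{nl2} close the argument.
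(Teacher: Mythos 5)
Your overall architecture coincides with the paper's: reduce to Theorem \ref{scattereddok1} via Propositions \ref{189} and \ref{1813}, build an order-$2$ GIFS on the $(\La,b,s)$-model, embed by Theorem \ref{fvf3} and extend by Lemma \ref{nl2}, shrinking $\lambda_b$ so the Kirszbraun--Valentine factor $\sqrt{2}$ is absorbed. Your scattered case (both $n=1$ and the adaptation for $\omega^\alpha\cdot n+1$) is essentially the paper's argument and is sound.

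The uncountable case, however, contains a genuine gap, and it is exactly at the point you flag as ``the hard part.'' You propose to obtain the surjections $M\to M_k$ ($M_k=M\cap X_k$) by applying Lemma \ref{1816} recursively, on the grounds that the sandwich condition $\La_s\subset\La_M\subset\La_r$ is ``self-similar.'' But Lemma \ref{1816} requires the inclusion $\La'_{M'}\subset\La_M$ of the \emph{target's} tree into the \emph{source's} tree, and for the pair (source $M$, target $M_k$) this fails in general: the tree of $M_k$ relative to $X_k$ is $\{\beta:k\ha\beta\in\La_M\}$, which may contain sequences with one occurrence of $1$ that do not belong to $\La_M$ (the sandwich only guarantees $\La_M\supset\La_s$, the $1$-free sequences). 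For instance $M$ may contain $x_{(2,1,2,2,\dots)}$ while omitting $x_{(1,2,2,\dots)}$. Taking the source to be all of $X$ restores the tree inclusion but then the restriction of the resulting surjection to $M$ need no longer be onto $M_k$, so it cannot serve as a GIFS map on $M$. This is precisely why the paper does \emph{not} use Lemma \ref{1816} to produce the covering maps in part (2): instead it takes the $1$-free Cantor set $Z=\{x_\eta:\eta\in\tL_s\}\subset M$ as the common domain, defines explicit index-shift surjections $s_k$ (adding $2$ to the first coordinate and subtracting $1$ from the $k$-th, which may create the single allowed $1$), composes with a Lipschitz retraction $r':X\to M$ (this retraction \emph{is} obtained from Lemma \ref{1816}, with $M'=M$ and source $X$), and verifies by a direct preimage computation that the images of the $s_k'=r'\circ s_k$ cover $M\setminus(X_1\cup X_2\cup X_3)$, with three further maps handling $M_1,M_2,M_3$. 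Some construction of this kind --- producing surjections \emph{defined on a subset of $M$} whose images cover the pieces of $M$ --- is unavoidable, and your proposal does not supply it.
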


The aforementioned theorem will follow automatically from Theorem \ref{scattereddok1}, which give more "qualitative" result.
\begin{remark}{
(1) As was mentioned in Theorem \ref{nowak1}, Nowak in \cite{N} showed that any compact scattered metrizable space of limit {scattered} height is not a~topological fractal. Hence Theorem \ref{scattered} shows that scattered spaces of limit { scattered height distinguish} the classes of~GIFSs' fractals and IFSs' fractals.\\
(2) As was mentioned in Theorems \ref{nowak1} and \ref{bns1}, $0$-dimensional compact metrizable spaces which are uncountable or scattered with successor height, are homeomorphic to attractors of IFSs on the real line consisting Banach contractions. In the constructions presented there, the number of required mappings in case of uncountable or unital case is two (but in a nonunital case it seems that we need at least three maps - see \cite{BNS2}). Hence our result, in the case of uncountable spaces, says nothing new. However, later we will need some particular properties that are guaranteed by our construction, so the proof of Theorem \ref{scattered} for an uncountable case can be considered as a step in the proof of results presented in the next section.\\
(3) In \cite{DISS}, Strobin and his coauthors studied a topological version of a GIFS, and Theorem \ref{scattered} shows that all compact scattered spaces are attractors of such kind of GIFSs.}
\end{remark}
The promised qualitative version of Theorem \ref{scattered} is the following (then Theorem \ref{scattered} follows from it and Propositions \ref{189}, \ref{1813}, Lemma \ref{nl2} and Theorem \ref{fvf3}).
\begin{theorem}\label{scattereddok1}$\;$\\
(1) Let $\alpha\leq\delta_0$, $n\in\N$ and $b$ be a good sequence. Then each $(\La^{\alpha,n},b,s)$-space is the attractor of~a~Banach GIFS $\F$ of order $2$ which consists of two maps and such that $\on{Lip}(\F)\leq \lambda_b$.\\
(2) Let $X$ be a $(\La_{\max},b,s)$-space and $M\subset X$ be compact set which satisfies (\ref{1812}). Then $M$ is the attractor of a Banach GIFS $\F$ of order $2$ which consists of four maps and such that $\on{Lip}(\F)\leq \lambda_b$.
\end{theorem}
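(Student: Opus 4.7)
For Part (1), I aim to construct two maps $f, g \colon X^2 \to X$ with $f(X^2) \cup g(X^2) = X$: $g$ handles the ``tail'' $\bigcup_{k \geq 2} X_k \cup \{x_\omega\}$ via Lemma \ref{1814}, while $f$ covers $X_1$. For $g$, the inputs $h_k \colon X \to X_k$ feeding Lemma \ref{1814} are produced via Lemma \ref{1816}; its tree-containment hypothesis follows, for each applicable $k$, from the definition of $\La^{\alpha,n}$, Proposition \ref{1}(iv), and monotonicity of the ladder system. The Lipschitz bound (i) of Lemma \ref{1814} then reduces to bounds of the form $M_b^{k-1} \leq \lambda_b/25$, and bound (ii) is automatic by Remark \ref{1814a} since $h_k(X) \subseteq X_k$; Lemma \ref{1814} yields $\on{Lip}(g) \leq \lambda_b/2$. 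For $f$: when $n = 1$, Lemma \ref{1816} applies directly to give $h_1 \colon X \to X_1$ using the inclusion $\La^{\alpha_1} \subset \La^\alpha$, and $f(x,y) := h_1(x)$ has $\on{Lip}(f) \leq 2M_b \leq \lambda_b/12$; when $n \geq 2$, one must work piecewise, mapping each of the structurally-isomorphic pieces $X_i$ ($i \leq n-1$) and $\tilde{X}_n$ (each a $(\La^\alpha, b^?, s)$-space) into carefully matched sub-pieces of $X_1$ via Lemma \ref{1816} (in the direction where the tree inclusion holds) and combining them into a single Lipschitz map, using (\ref{181b}) and the separations between the pieces to control the joint Lipschitz constant. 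The resulting $\F = \{f,g\}$ is a Banach GIFS of order 2 with $\on{Lip}(\F) \leq \lambda_b$, and Miculescu--Mihail uniqueness forces its attractor to be $X$.

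For Part (2), I would split $M = M' \cup M''$ into $M' := M \cap X_1$ and $M'' := M \cap \tilde{X}_2$. Both $X_1$ and $\tilde{X}_2$ are $(\La_{\max}, b^1, s)$-spaces by Lemma \ref{1822}(d), (e), and the hypothesis $\La_s \subset \La_M \subset \La_r$ restricts $M'$ and $M''$ to have compact, well-controlled tree structures inside these ambient spaces; to each of them one applies Part (1)'s methodology (or a minor adaptation for compact subsets), producing a 2-map Banach GIFS of order 2 with $\on{Lip} \leq \lambda_b$. Finally, Lemma \ref{filipnowy9} combines the two GIFSes: the separation $\on{dist}(X_1, \tilde{X}_2) \geq b_0 - 2b_1 - b_2$ together with $\max(\on{diam}(X_1), \on{diam}(\tilde{X}_2)) \leq b_1$ and (\ref{181b}) make the hypothesis of Remark \ref{nr1} satisfied with ratio $\leq \lambda_b/20 \leq 1$, yielding projections of Lipschitz constant $\leq 1$; hence the combined 4-map Banach GIFS on $M$ inherits $\on{Lip}(\F) \leq \lambda_b$.

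The main obstacle is constructing the head map $f \colon X^2 \to X_1$ for Part (1) when $n \geq 2$: because $\La^\alpha \not\subset \La^{\alpha,n}$ in general, Lemma \ref{1816} does not apply directly to produce a Lipschitz surjection $X \to X_1$ with constant $<1$, nor does a global affine identification of the $\R$-embeddings (which are not self-similar at the required ratio). One must instead exploit the decomposition $X = X_1 \cup X_2 \cup \cdots \cup X_{n-1} \cup \tilde{X}_n$ into structurally-similar $(\La^\alpha, b^?, s)$-pieces, route each piece into a disjoint sub-piece of $X_1$ via Lemma \ref{1816} in the direction where the containment does hold, and verify, using (\ref{181b}) and the pairwise-separation estimates from Definition \ref{183}(iii),(iv), that the pieces can be glued into a single Lipschitz map with constant $\leq \lambda_b$. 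All remaining estimates are routine applications of (\ref{181b}).
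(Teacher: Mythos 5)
Your architecture for the case $n=1$ of part (1) is exactly the paper's: $h_1:X\to X_1$ from Lemma \ref{1816} (via $\La^{\alpha_1}\subset\La^\alpha$) covers $X_1$, and Lemmas \ref{1816}, \ref{1814} with Remark \ref{1814a} give $F$ with $F(X\times X)=X\setminus X_1$. The gap is in your head map for $n\geq 2$. You propose to route each of the $n$ pieces $X_1,\dots,X_{n-1},\tilde X_n$ onto a disjoint sub-piece of $X_1$ via Lemma \ref{1816} and glue. But for the interesting $\alpha$ the set $X_1=\bigcup_{k\in\oN}X_{(1,k)}$ has infinitely many nonempty, pairwise separated clopen pieces, while your construction produces only finitely many Lemma-\ref{1816} images, each landing in a single target sub-piece. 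A finite union of such targets can exhaust $X_1$ only if some target is co-final, i.e.\ of the form $\tilde X_{(1,k)}$ (or $X_1$ itself), and Lemma \ref{1816} cannot surject any piece onto such a target with constant $<1$: the needed tree inclusion $\La_2[(1,k)]\subset\La^\alpha$ reduces to $\alpha_{i+k-1}\leq\alpha_i$, which fails for limit $\alpha$ when $k\geq 2$, while for $k=1$ the scale ratio $\sup_i b_{1+i}/b_{j+i}$ is $\geq 1$. So your $f$ is not surjective onto $X_1$. The paper's resolution is a genuinely two-variable recursion that your sketch is missing: applying the already-proved $n=1$ case to $X_1$ itself yields $F_1:X_1\times X_1\to X_1\setminus X_{(1,1)}$ (the second coordinate ranging over the infinitely many pieces of $X_1$ supplies the infinitely many selector values), the one remaining piece $X_{(1,1)}$ is covered by a one-variable $h:X\setminus X_1\to X_{(1,1)}$ from Lemma \ref{1816}, and these are glued into $P:X\times X\to X_1$; a second map $Q$ covers $X\setminus X_1$ by cycling $X_i\to X_{i+1}$ and $X_1\to\tilde X_n$.

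Part (2) has a similar problem. Splitting $M$ along $X_1$ versus $\tilde X_2$ and invoking ``Part (1)'s methodology'' does not transfer: $M$ is uncountable, and the pieces $M\cap X_k$ carry none of the self-similar containments $\La^{\alpha_k}\subset\La^\alpha$ on which the part-(1) recursion rests; they are constrained only by $\La_s\subset\La_M\subset\La_r$. The paper instead fixes the Cantor set $Z=\{x_\eta:\eta\in\tL_s\}\subset M$ as a common domain, defines explicit index-shift maps $s_k(x_\eta)=x_{(\eta_1+2)\ha\eta^1\ha(\eta_k-1)\ha\eta^2}$ and variants $s_k^i$ --- whose whole point is that every $\eta\in\tL_M\subset\tL_r$ (at most one entry equal to $1$) has a preimage in $\tL_s$ (no entries equal to $1$) --- composes with the retraction $r':X\to M$ of Lemma \ref{1816}, and feeds the resulting families into Lemma \ref{1814} to obtain the four maps. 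This surjectivity mechanism, which is the actual content of part (2), is absent from your proposal.
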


In the {remaining part} of this section we prove Theorem \ref{scattereddok1}. 
\begin{proof}
Assume now that $X$ is a $(\La^\alpha,b,s)$-space and choose $k\in\N$. By Proposition \ref{1} we see that $\Lambda^{\alpha_k}\subset \Lambda^\alpha$ and, as was already observed, the space $X_k$ is $(\La^{\alpha_k},b^k,s)$-space. Moreover, $$\lambda:=\sup\left\{\frac{b^k_n}{b_n}:n\in\N\right\}=\sup\left\{\frac{b_{n+k}}{b_n}:n\in\N\right\}\leq M_b\leq\frac{1}{8}\lambda_b.$$  
Hence by Lemma \ref{1816} there exists a surjective map $h_k:X\to X_k$ with $\on{Lip}(h_k)\leq\frac{1}{4}\lambda_b$. Then by Lemma \ref{1814} and Remark \ref{1814a}, there exists a map $F:X\times X\to X$ such that $\on{Lip}(F)\leq \frac{1}{2}\lambda_b$ and $F(X\times X)=X\setminus X_1$. Moreover, define $G:X\to X$ by $G(x):=h_1(x)$. Then $G(X)=X_1$ and $\on{Lip}(G)\leq\frac{1}{4}\lambda_b$. Hence by~Lemma~\ref{nl1}, the proof of Theorem \ref{scattereddok1}(1) in the case when $n=1$ is finished.\\
Now let $X$ be a $(\La^{\alpha,n},b,s)$-space, where $n>1$. As was observed, $X_1,X_2,{\dots},X_{n-1}$ are $(\La^\alpha,b^i,s)$-spaces, for $i=1,2,{\dots},n-1$, respectively, and $\tilde{X}_n:=\bigcup_{n\leq k\leq \omega}X_k$ is a $(\La^{\alpha},b^{n-1},s)$-space.
\newline
By what we already proved, there is a surjection $F_1:X_1\times X_1\to X_1\setminus X_{(1,1)}$ with $\on{Lip}(F_1)\leq \frac{1}{2}\lambda_{b^1}\leq \frac{1}{2}\lambda_{b}$. Now (see Lemma \ref{1822}{(d)}) since $X\setminus X_1$ is a $(\La^{\alpha,n-1},b^1,s)$-space, $X_{(1,1)}$ is $(\La^{\alpha_1},b^2,s)$-space and $\La^{\alpha_1}\subset \La^{\alpha,n-1}$, by Lemma \ref{1816} there exists a surjection $h:X\setminus X_1\to X_{(1,1)}$ with $\on{Lip}(h)\leq 2M_b\leq \lambda_b$. Hence, by simple calculations (involving (\ref{181b})) we can show that the map $P:X\times X\to X_1$ defined by
$$
P(x,y):=\left\{\begin{array}{ccc}F_1(x,y)&\mbox{if}&(x,y)\in X_1\times X_1,\\
h(x)&\mbox{if}&x\notin X_1,\\
z&\mbox{if}&x\in X_1, \;y\notin X_1\end{array}\right.
$$
where $z$ is an arbitrary point of $X_1$, is surjective and $\on{Lip}(P)\leq \lambda_b$.\\
Now if $n=2$, then exactly the same reasoning, shows that there exists a surjection $Q:X\times X\to \tilde{X}_2=X\setminus X_1$ with $\on{Lip}(Q)\leq \lambda_b$, and we are done. Hence let $n\geq 3$.\\
By Lemma \ref{1816}, proceeding similarly as earlier, we can see that for $i=1,{\dots},n-2$, there exists a surjective map $h_i:X_i\to X_{i+1}$ with $\on{Lip}(h_i)\leq 2M_b\leq \lambda_b$. Moreover, for the same reason, there exists a surjection $h_{n-1}:X_1\to \tilde{X}_n$ with $\on{Lip}(h_{n-1})\leq2M_b\leq \lambda_b$. Then the map $Q:X\times X\to X\setminus X_1$ defined by
$$
Q(x,y):=\left\{\begin{array}{ccl}h_i(x)&\mbox{if}&x\in X_i,\;i=1,{\dots},n-2,\;y\in X_1,\\
h_{n-1}(x)&\mbox{if}&x\in X_1,\;y\notin X_1,\\
z&\mbox{otherwise}\end{array}\right.
$$
where $z$ is an arbitrary element of $X_{n-1}$,
is surjective and $\on{Lip}(Q)\leq\lambda_b$.\\
All in all, the proof of Theorem \ref{scattereddok1}(1) is finished.

Now we switch to the case when $M$ is a compact subset of a $(\La_{\max},b,s)$-space $X$ which satisfies (\ref{1812}).\\
Let $Z:=\{x_\eta:\eta\in\tL_s\}$. By (\ref{1812'}) and Observation \ref{1819}, $Z\subset M$. We will define four maps: $G_i:Z\times Z\to M$, $i=2,3,4$ and $F:Z\to M$, such that $\on{Lip}(G_i),\on{Lip}(F)\leq \frac{1}{2}\lambda_b$ and $\bigcup_{i=2}^4G_i(Z\times Z)\cup F(Z)=M$. This will end the proof. Indeed, then we can define $\tilde{G}_i:M\times M\to M$ and $\tilde{F}:M\to M$ by $\tilde{G}_i(x,y)=G_i(r(x),r(y))$ and $\tilde{F}(x)=F(r(x))$, where $r:X\to Z$ is a projective map with $r(x)=x$ for $x\in Z$ and $\on{Lip}(r)\leq 2$ (existence of $r$ is guaranteed by Lemma \ref{1816}). Then we obtain GIFS $\F$ with $\on{Lip}(\F)\leq \lambda_b$ whose attractor is $M$.\\
Define $M_1,M_2,M_3,M_4$ in the following way:
$$
M_4:=M\setminus (X_1\cup X_2\cup X_3)
$$
and 
$$
M_i:=M\cap X_i,\;i=1,2,3.
$$
Clearly, $M=\bigcup_{i=1}^4 M_i$. We first define the map $G_4$. 
If $\eta\in\tL_s$, then by $\eta'$ we will mean the sequence so that $\eta=\eta_1\ha \eta'$. Moreover, if $2\leq k<\omega$ and $|\eta|\geq k$, then by $\eta^1,\eta^2$ we will mean sequences so that $\eta=\eta_1\ha \eta^1\ha\eta_k\ha\eta^2$ (if $k=2$ and $|\eta|=2$, then $\eta^1=\eta^2=\emptyset$). 
\newline
Let $s_k:Z\to X$ be the map defined by (we assume $\omega+2=\omega$ and $\omega-1=\omega$ and equate $x_\eta\in Z$ with $\eta \in \tL_{s}$):
$$
s_k(x_\eta):=\left\{\begin{array}{ccc}x_{(\eta_1+2)\ha\eta'}&\mbox{if}&|\eta|< k,\\
x_{(\eta_1+2)\ha\eta^1\ha(\eta_k-1)\ha \eta^2}&\mbox{if}&\eta=\eta_1\ha\eta^1\ha\eta_k\ha\eta^2.
\end{array}\right.$$
Observe that $s_k$ is well defined as $X$ is $(\La_{\max}, b, s)$-space and $\eta$ does not contain $1$. Moreover, $\on{Lip}(s_k)\leq\frac{1}{8}\lambda_b$. Indeed, take distinct $x=x_\eta,\;y=y_\xi\in Z$, and let $j$ be such that $[\eta]_j=[\xi]_j$ and $x\in  X_{[\eta]_j\ha p}$ and $y\in X_{[\xi]_j\ha q}$ for some $p<q\leq\omega$. We consider two Cases:\\
Case 1. $j> k-1$. Then, setting $\beta:=[\eta]_j$ and $\tilde{\beta}$ -- its modification due to definition of $s_k$ (that is, $\tilde{\beta}=(\eta_1+2)\ha[\eta]_j^1\ha(\eta_k-1)\ha[\eta]^2_j$) -- we have by~(\ref{181b})
$$
d(x,y)\geq \on{dist}\left(X_{\beta\ha p},X_{\beta\ha q}\right)\geq b_{l(\beta)+p-1}-2b_{l(\beta)+p}-b_{l(\beta)+p+1}
$$
and
$$
d(s_k(x),s_k(y))\leq \on{diam}{\Bigg(}\bigcup_{p\leq i\leq\omega}X_{\tilde{\beta}\ha i}{\Bigg)}\leq b_{l(\tilde{\beta})+p-1}=$$ $$=b_{l(\beta)+2-1+p-1}=b_{l(\beta)+p}\leq\frac{1}{8}\lambda_b(b_{l(\beta)+p-1}-2b_{l(\beta)+p}-b_{l(\beta)+p+1})\leq\frac{1}{8}\lambda_b d(x,y).
$$
Case 2. $j=k-1$. Then, letting $\tilde{\beta}$ be appropriate modification of $\beta$, we have that
$$
d(s_k(x),s_k(y))\leq \on{diam}{\Bigg(}\bigcup_{p-1\leq i\leq\omega}X_{\tilde{\beta}\ha i}{\Bigg)}\leq b_{l(\tilde{\beta})+p-2}=$$ $$=b_{l(\beta)+2+p-2}=b_{l(\beta)+p}\leq\frac{1}{8}\lambda_b(b_{l(\beta)+p-1}-2b_{l(\beta)+p}-b_{l(\beta)+p+1})\leq\frac{1}{8}\lambda_b d(x,y).
$$
Case 3. $j<k-1$. Then we obtain the desired inequality in a similar way.

Finally, we show that for $k<j<\omega$ and every $x\in Z$, $d(s_k(x),s_j(x))\leq b_{k+1}$. Let $x=x_\eta$. If $|\eta| < k$, then also $|\eta| < j$, and $s_k(x)=s_j(x)$. Hence assume that $|\eta|\geq k$. Then $s_k(x)\in X_{(\eta_1+2,\eta_2,{\dots},\eta_{k-1})}$ and~$s_j(x)\in X_{(\eta_1+2,\eta_2,{\dots},\eta_{k-1})}$, so
$$
d(s_k(x),s_j(x))\leq\on{diam}\left(X_{(\eta_1+2,\eta_2,{\dots},\eta_{k-1})}\right)\leq b_{\eta_1+2+\eta_2+{\dots}+\eta_{k-1}}\leq b_{k-1+2}=b_{k+1}.
$$
Finally, let $r':X\to M$ be the projective map with $r'(x)=x$ for $x\in M$ and $\on{Lip}(r')\leq 2$, and set $s_k':=r'\circ s_k$. Then $\on{Lip}(s'_k)\leq \frac{1}{4}\lambda_b$ and $d(s_{k}'(x),s_{j}'(x))\leq 2b_{k+1}\leq b_{k}$. 
Hence the assumptions of~Lemma~\ref{1814} are satisfied for $h_k:=s'_{k-1},\;k\geq 3$ and $h_2$ - any constant map with value in $M_4$. Hence there is a map $G_4:Z\times Z\to X$ such that $\on{Lip}(G_4)\leq\frac{1}{2}\lambda_b$, and (since $Z\cap X_i\neq\emptyset$ for $i\geq 2$)
$$M\supset G_4(Z\times Z)=\bigcup\{h_k(Z):k\in\oN,\;k\geq 3\}\supset\bigcup\{s_k'(Z):k\geq 2\}\supset M_4$$
To see the last inclusion, take any $x_\eta\in M_4$. If $|\eta|\geq 2$, then we can choose $k\geq 2$ so that $\eta_k=\min\{\eta_i:i\geq 2\}$ and we put $\xi:=(\eta_1-2)\ha\eta^1\ha (\eta_k+1)\ha\eta^2$ and then $s_k'(x_\xi)=s_k(x_\xi)=x_\eta$ (since $\eta$ can have just one $1$, we have that $x_\xi\in Z$). If $|\eta|=1$, then $\eta=\omega$ and $s_2'(x_\omega)=x_\omega$.\\
Now for $i=2,3$ and every $k\geq 2$, define $s^i_k:Z\to X$ by
$$
s^i_k(x_\eta):=\left\{\begin{array}{ccc}x_{i\ha\eta}&\mbox{if}&|\eta|< k-1,\\
x_{i\ha\eta^1\ha(\eta_{k-1}-1)\ha \eta^2}&\mbox{if}&\eta=\eta^1\ha\eta_{k-1}\ha\eta^2.
\end{array}\right.
$$
Similarly as before we can show that $\on{Lip}(s^i_k)\leq \frac{1}{8}\lambda_b$ and $d(s_k^i(x),s_j^i(x))\leq b_{k}$, and proceeding in an~analogous way, we can define maps $G_i:Z\times Z\to X$ with $\on{Lip}(G_i)\leq \frac{1}{2}\lambda_b$ and such that for $i=2,3$, $M_i~\subset G_i(Z\times Z)\subset~M$.\\
Finally, define $F:Z\to M$ by $F(x_\eta):=r'(x_{1\ha\eta})$. Then it is easy to see that $\on{Lip}(F)\leq \frac{1}{2}\lambda_b$ and $F(Z)\supset M_1$. The result follows.
\end{proof}

\begin{remark}{By Lemma \ref{filipnowy9} we can see that appropriately separated union of $n$ many $(\La^\alpha,b,s)$-sets is the attractor of some Banach GIFS. However, presented construction shows that it is enough to take just two functions.}
\end{remark}

\begin{remark}{
It is worth underlining that GIFSs give much more possibilities than IFSs. Considering Cartesian {power} in the domain provides indispensable space when considering compact scattered spaces with countable limit height. Moreover, observe that the definition of function $G$ (in the proof for $(\La^\alpha,b,s)$-spaces) does not make use of~whole Cartesian {power} (it can be perceived as a~selfmap).} 
\end{remark}

\section{$0$-dimensional compact spaces as attractors of GIFSs of order $m$ and nonattractors of GIFSs of order $m-1$}

In this section we prove the following theorem:
\begin{theorem}\label{m-scattered}
Let $X$ be an infinite compact $0$-dimensional metrizable space.\\
(1) For every $\lambda\in(0,1)$ and $m\geq 2$, there exists $Z\subset \R$ which is homeomorphic to $X$ and such that:
\begin{itemize}
\item[(1a)] $Z$ is the attractor of some GIFS $\F$ on the real line of order $m$ with $\on{Lip}(\F)\leq\lambda$;
\item[(1b)] $Z$ is not {the} attractor of any weak GIFS of order $m-1$.
\end{itemize}
(2) There exists $Z\subset \R$ which is homeomorphic to $X$ and which is not {the} attractor of any weak GIFS.
\end{theorem}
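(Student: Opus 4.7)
The plan combines the embedding technology of Section 3 with a careful choice of the good sequence $b$ that simultaneously controls the upper and lower bounds on the GIFS order of the resulting realization $Z\subset\R$. For the upper bound (1a), I would take $Z$ to be a suitable $(\La,b,s)$-space (or compact subset thereof) embedded in $\R$ via Theorem \ref{fvf3}(1): in the countable case, $Z$ is a $(\La^{\alpha,n},b,s)$-space with $\alpha=\on{ht}(X)$ and $n=\on{card}(X^{(\alpha)})$ (Proposition \ref{189}); in the uncountable case $Z$ is a compact subset $M$ of a $(\La_{\max},b,s)$-space satisfying $\La_s\subset\La_M\subset\La_r$ (Proposition \ref{1813}). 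Theorem \ref{scattereddok1} then produces a Banach GIFS of order $2$ with Lipschitz constant at most $\lambda_b$ whose attractor is $Z$; Lemma \ref{nl1} enlarges the arity to $m$, and picking $b$ with $\lambda_b\leq\lambda$ (possible since the only constraint is $M_b<\frac{1}{25}$) secures (1a).

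The heart of the proof is (1b). I would follow the lower-bound strategy of \cite{S}: if $Z$ were the attractor of a weak GIFS $\G$ of order $m-1$ on the compact $Z$, then iterating $Z=\bigcup_{g\in\G}g(Z^{m-1})$ and exploiting compactness (to extract a uniform Edelstein modulus for the finitely many maps) would force $Z$ to admit a hierarchical cover whose combinatorial and metric profile, at depth $k$, is controlled quantitatively by $|\G|$ and $m-1$. The good sequence $b$ would be chosen so that the intrinsic profile of $Z$, dictated by $\La$ and $b$, violates this order-$(m-1)$ profile at infinitely many scales, while Theorem \ref{scattereddok1} still delivers the order-$m$ realization of (1a). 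The main computational effort is to pin down the incompatibility threshold and verify that some choice of $b$ satisfies both $M_b<\frac{1}{25}$ and the lower-bound inequality; this adapts the Cantor-set computation of \cite{S} to our more structured setting, using Lemma \ref{1822} and, when needed, a subtree-reduction in the spirit of Lemma \ref{nl3} to isolate the obstruction on a distinguished subset of $Z$.

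For part (2), I would diagonalize over $m$: partition $\N$ into infinite blocks $I_m$ for $m\geq 2$ and prescribe the local behaviour of $b$ on each $I_m$ so that the order-$m$ obstruction of (1b) is witnessed at scales indexed by $I_m$. Because $M_b<\frac{1}{25}$ is an open condition with substantial slack, such a $b$ remains good. By Lemma \ref{1822}(f), the homeomorphism type of $(\La,b,s)$-spaces depends only on $\La$, so the resulting $Z$ is still homeomorphic to $X$, yet no weak GIFS of any fixed finite order can generate it.

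The principal obstacle is the quantitative lower bound against weak (as opposed to Banach) GIFSs: upgrading the pointwise Edelstein condition to a uniform quantitative diameter-covering estimate on a compact attractor of order $m-1$, in a way that applies uniformly across all such GIFSs. This requires the compactness techniques developed for topologically contractive IFSs in \cite{BKNNS,MM3}, suitably adapted to the higher-arity GIFS setting. A secondary obstacle is ensuring that in the uncountable case, the specific compact subset $M$ picked via Proposition \ref{1813} still inherits enough of the ambient $(\La_{\max},b,s)$-structure to inherit the lower bound; this is where the inclusions $\La_s\subset\La_M\subset\La_r$ become essential, since they pin down the branching profile of $M$ from both sides.
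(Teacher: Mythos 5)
There is a fatal gap in your plan for (1b), and it is structural rather than computational. You propose to realize $Z$ as a $(\La^{\alpha,n},b,s)$-space (or, in the uncountable case, as a compact $M$ inside a $(\La_{\max},b,s)$-space satisfying (\ref{1812})) and then to choose the good sequence $b$ so that $Z$ fails to be the attractor of any weak GIFS of order $m-1$. But Theorem \ref{scattereddok1} applies to \emph{every} good sequence $b$: any such $Z$ is the attractor of a Banach GIFS of order $2$, hence (by Lemma \ref{nl1}) of order $m-1$ for every $m\geq 3$. So within the class of spaces you allow yourself, (1b) is false for all $m\geq 3$ no matter how $b$ is tuned; the ``incompatibility threshold'' you hope to locate does not exist, and the same objection kills the diagonalization over blocks of $\N$ in your part (2) (where, moreover, you yourself note via Lemma \ref{1822}(f) that $b$ does not change the space up to homeomorphism --- it also does not change the order-$2$ attractor property). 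The idea you are missing is that the obstruction in \cite{S} is combinatorial--cardinal, not metric: one needs, at scale $n$, a designated finite piece of exactly $p_n$ points with $p_n$ growing so fast that the roughly $(p_1+\dots+p_{n-1}+n2^n)^{m-1}$ products of lower-scale cells cannot surject onto it, while $p_{n-1}^{m}\geq p_n$ still permits an order-$m$ surjection.

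Accordingly, the paper does not take $Z$ to be a $(\La,b,s)$-space at all: it enlarges the construction to $(\La,(b,p),s)$-spaces, attaching to each cell $X_k$ a finite set $Y_k$ of $p_k$ equally spaced isolated points, with $p_{n+1}=p_n^{m}$ for part (1) and $p_{n+1}=p_n^{n}$ for part (2). Lemma \ref{mdok1}(1) shows that for any generalized weak contraction $F$ of order $m-1$ the image $F((M\cup Y)^{m-1})$ meets $Y_n$ in $o(p_n)$ points (the key step being the local minimality argument behind (\ref{nc66}), not a uniform Edelstein modulus as you suggest), while Lemma \ref{mdok1}(2) builds the order-$m$ GIFS by surjecting the $\geq p_{k-1}^m$ cells of $G_{k-1}$ onto $Y_k$. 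This forces two further steps absent from your proposal: a three-case verification that $M\cup Y$ is still homeomorphic to $X$ (possible only because the added points are isolated and $X\setminus X^{(\infty)}$ is infinite), and a separate treatment of the case $X\setminus X^{(\infty)}$ finite, which is reduced to the Cantor sets of \cite{S} via a lemma allowing one to strip a finite, well-separated set $P$ from an attractor $C\cup P$.
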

Almost whole Theorem \ref{m-scattered} will follow from more ``qualitative$"$ Theorem \ref{m-scattereddok} and Lemma \ref{nl2} (or \cite[Lemma 1.1(ii)]{L}). 
However, the case when $X\setminus X^{(\infty)}$ is finite (where $X^{(\infty)}$ is the perfect kernel) need to be dealt separately. 
\newline
At first recall that in \cite{S}, for every $\lambda\in(0,1)$ and every $m\geq 2$, we constructed a Cantor subset of the plane which is the attractor of some Banach GIFS of order $m$ which is not the attractor of any weak GIFS of order $m-1$, and also we constructed a Cantor set which is not the attractor of any weak GIFS. As can be seen, the construction proved there can be modified so that we get appropriate Cantor subsets of the real line (in fact, in the first draft of that paper, exactly such sets were constructed, as they were inspired by the construction from \cite{CR}). Hence, Theorem \ref{m-scattered}(1) for $X$ with $X\setminus X^{(\infty)}$ finite, follows from Lemma \ref{filipnowy9}, Remark \ref{nr1} and the following:
\begin{lemma}
Assume that $m\in\N$, $\emptyset\neq C\subset \R$, $\emptyset\neq P\subset \R$ is finite and $\on{dist}(C,P)\geq \on{diam}(C)$. If $C\cup P$ is the attractor of a weak GIFS of order $m$, then $C$ is the attractor of a weak GIFS of order $m$.
\end{lemma}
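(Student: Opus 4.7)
The plan is to exploit the separation condition $\on{dist}(C, P) \geq \on{diam}(C)$ together with the finiteness of $P$ in order to slice the given weak GIFS on $C \cup P$ into a finite collection of weak contractions $C^m \to C$.

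First I would establish a \emph{diameter trichotomy}: for every $f$ in the given GIFS $\mathcal{F}$ and every nonempty compact $K \subseteq (C \cup P)^m$, one has $\on{diam}(f(K)) < \on{diam}(K)$ whenever $\on{diam}(K) > 0$. This is a standard Edelstein-type argument: the continuous function $(y, y') \mapsto d(f(y), f(y'))$ attains its maximum on the compact set $K \times K$, and by the weak contraction property that maximum lies strictly below $\on{diam}(K)$ unless attained on the diagonal (in which case it equals zero). In particular, if $\on{diam}(K) \leq \on{diam}(C)$, then $\on{diam}(f(K)) < \on{dist}(C, P)$, so $f(K)$ cannot contain points of both $C$ and $P$.

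Next I would decompose
\[
(C \cup P)^m = \bigcup_{\vec{F}} F_1 \times \cdots \times F_m,
\]
where each $F_i$ independently ranges over $\{C\} \cup \bigl\{ \{p\} : p \in P \bigr\}$. Each factor in this finite decomposition is compact with diameter equal to either $\on{diam}(C)$ or $0$. By the trichotomy, $f(F_1 \times \cdots \times F_m) \subseteq C$ or $f(F_1 \times \cdots \times F_m) \subseteq P$ for every $f \in \mathcal{F}$. Call a pair $(f, \vec{F})$ \emph{good} when the image lies in $C$. For each good pair define $g_{f, \vec{F}} : C^m \to C$ by
\[
g_{f, \vec{F}}(x_1, \dots, x_m) := f(y_1, \dots, y_m), \qquad y_i := \begin{cases} x_i, & \text{if } F_i = C, \\ p_i, & \text{if } F_i = \{p_i\}, \end{cases}
\]
so that $g_{f, \vec{F}}(C^m) = f(F_1 \times \cdots \times F_m)$. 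A direct check shows each $g_{f, \vec{F}}$ is a weak contraction: for distinct $x, x' \in C^m$, either the corresponding tuples $y, y'$ coincide (and then $g_{f, \vec{F}}(x) = g_{f, \vec{F}}(x')$, so contractivity is trivial) or $y \neq y'$, in which case $d(g_{f, \vec{F}}(x), g_{f, \vec{F}}(x')) = d(f(y), f(y')) < d_m(y, y') \leq d_m(x, x')$.

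Collecting all good pairs yields a finite family $\mathcal{F}'$ of weak contractions on $C^m$. Intersecting the identity $C \cup P = \bigcup_{f \in \mathcal{F}} f((C \cup P)^m)$ with $C$ and using the trichotomy piece by piece gives $C = \bigcup_{g \in \mathcal{F}'} g(C^m)$, which identifies $C$ with the attractor of $\mathcal{F}'$ by the uniqueness part of the Hutchinson-type theorem for weak GIFSs cited in the paper. The principal conceptual point is the diameter trichotomy, which depends essentially on attainment of suprema (compactness of $K$) since we have only strict, non-uniform contractivity and no ambient Lipschitz constant. The sole degenerate case, in which $C$ is a single point, is handled trivially by taking $\mathcal{F}'$ to consist of a single constant map.
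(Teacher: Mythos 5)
Your proof is correct. It shares with the paper's proof the central combinatorial step: decomposing $(C\cup P)^m$ according to which coordinates are frozen at points of the finite set $P$ and which range over $C$, thereby turning each map of order $m$ into finitely many weak contractions $C^{m-k}\to C$ (extendable to order $m$ in the obvious way). Where the two arguments diverge is in how the separation hypothesis $\on{dist}(C,P)\geq\on{diam}(C)$ is used to force images into $C$. The paper post-composes every sliced map with the retraction $\pi:C\cup P\to C$ (identity on $C$, constant on $P$), which is $1$-Lipschitz precisely because of the separation condition, so weak contractivity is preserved for free; the finitely many points of $C$ reachable only as $f_j(P^m)$ are then covered by constant maps. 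You instead prove an Edelstein-type diameter estimate --- $\on{diam}(f(K))<\on{diam}(K)$ for compact $K$ of positive diameter, via attainment of the supremum on $K\times K$ --- to conclude that each sliced image lies entirely in $C$ or entirely in $P$, and you simply discard the latter; the all-$P$ slices become your constant maps automatically. Your route avoids introducing the retraction at the cost of the compactness argument that the paper's $1$-Lipschitz composition sidesteps; both exploit the hypothesis in essentially equivalent ways, and your separate treatment of the degenerate case $\on{diam}(C)=0$ correctly closes the one gap the trichotomy leaves open.
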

\begin{proof}
By our assumptions, we can define the projection $\pi:C\cup P\to C$ with $\on{Lip}(\pi)=1$.
Let~$\F~=~\{f_1,{\dots},f_n\}$ be a weak GIFS of order $m$ such that $C\cup P$ is its attractor. Now for every $1\leq k<m$, $1\leq i_1<{\dots}<i_k\leq m$, $(z_{i_1},{\dots},z_{i_k})\in P^k$ and $j=1,{\dots},n$, we define $f_j^{(i_1,{\dots},i_k),(z_{i_1},{\dots},z_{i_k})}:C^{m-k}\to C$ in the following way: if $(x_1,{\dots},x_{m-k})\in C^{m-k}$, then let $f_j^{(i_1,{\dots},i_k),(z_{i_1},{\dots},z_{i_k})}(x_1,{\dots},x_{m-k})$ be equal to the~~value $\pi(f_j(y_1,{\dots},y_m))$, where $y_{i_1}=z_{i_1},{\dots},y_{i_k}=z_{i_k}$ and remaining coordinates of $(y_1,{\dots},y_n)$ equals $x_1,{\dots},x_{m-k}$, consecutively. Define also $f_j^0: C^m \to C$ as $f_j^0 := \pi \circ f_j$. As $f_j$ are generalized weak contractions, so~are~$f_j^0$ and $f_j^{(i_1,{\dots},i_k),(z_{i_1},{\dots},z_{i_k})}$.  Finally, observe that
$$
C\setminus \bigcup_{j=1,{\dots},n}f_j(P^m)\subset $$ $$\subset \bigcup_{j=1,{\dots},n}{\bigg(} f_j^0\left(C^m\right) \cup \bigcup_{1\leq k<m}\bigcup_{i_1<{\dots}<i_k} \bigcup_{(z_{i_1},{\dots},z_{i_k})}f_j^{(i_1,{\dots},i_k),(z_{i_1},{\dots},z_{i_k})}\left(C^{m-k}\right) {\bigg)}\subset C
$$
Since $\bigcup_{j=1,{\dots},n}f_j(P^m)$ is finite, we can define finitely many constant maps $g_1,{\dots},g_l:C^m\to C$ so that these mappings, together with maps $f_j^0$ and obvious extensions (like in Lemma \ref{nl1}) of all maps $f_j^{(i_1,{\dots},i_k),(z_{i_1},{\dots},z_{i_k})}$, will form a~weak GIFS of order $m$ whose attractor is $C$, and this would be a contradiction.
\end{proof}

To prove Theorem \ref{m-scattered} in the {remaining case} when $X\setminus X^{(\infty)}$ is infinite, we have to extend a bit the notion of $(\La,b,s)$-space by adding finite set to each segment $X_k$.
We say that a pair of sequences $(b,p)$ is \emph{a good pair}, if $b$ is good and $p$ is~a~sequence of natural numbers such that $p_k\geq 2$ for $k\in\N$ and
\begin{equation}\label{good}
\sup\left\{8(1+4p_{k}){\frac{b_{k}}{b_{k-1}}}:k\in\N\right\}  < 1\;\;\;\mbox{and}\;\;\;\sup\left\{\frac{b_{k}}{b_{k-1}}p_k:k\in\N,\;k\geq 2\right\}\leq\lambda_b.
\end{equation}

\begin{definition}\label{1823}\emph{Let $(b,p)$ be a good pair and $\La$ be a proper tree. We say that a metric space $Z$ is a }$(\La,(b,p),s)$-space,\emph{ if $Z=X\cup Y$, where
\begin{itemize}
\item[(i)] $X$ is a $(\La,b,s)$-space;
\item[(ii)] $Y=\bigcup_{1\leq k\leq\omega}Y_k$, where $Y_\omega=X_\omega$, and for every $k\in\N$,
\begin{itemize}
\item[(iia)] $Y_k=\{a^k_1,{\dots},a^k_{p_k}\}$ for some $a^k_1,{\dots},a^k_{p_k}$ with $d(a^k_i,a^k_{j})=2(j-i)b_k$ for $i\leq j$;
\item[(iib)] $\on{dist}(X_k,Y_k)=2b_kp_k$.
\end{itemize}
\end{itemize}
}
\end{definition}
\begin{remark}\label{nr2}{
It is easy to see that for every sequence $p=(p_k)$ of natural numbers with $p_k\geq 2$ for $k\in\N$, and $\lambda>0$, there exists a good sequence $b$~such that the pair $(b,p)$ is good and $\lambda_b<\lambda$ (the important fact is that in the "middle condition" in (\ref{good}) we consider only $k\geq 2$). It is also clear that for every good pair $(b,p)$ and a proper tree $\Lambda$, there exists a set $X\subset \R$ which is a~$(\La,(p,b),s)$-space.}
\end{remark}
Later, when writing a $(\La,(b,p),s)$-space as $X\cup Y$, we will automatically assume that $X,Y$ have meaning as in the above definition. We start with making basic observations of the structure of $(\La,(b,p),s)$-spaces. If $Z=X\cup Y$ is a~$(\La,(b,p),s)$-space, then for any $k\in\oN$, we put $Z_k:=X_k\cup Y_k$.
\begin{observation}\label{obs2}
Let $Z=X\cup Y$ be a $(\La,(b,p),s)$-space with $p_n\geq 2$. Then for every $1\leq k<\omega$,
\begin{itemize}
\item[(i)] $\on{diam}(Z_k)\leq \frac{1}{8}b_{k-1}$;
\item[(ii)] 
$\on{dist}{\big(}Z_k,\bigcup_{k< j\leq\omega}Z_j{\big)}\geq  \frac{2}{3}b_{k-1}$;
\item[(iii)] $\on{diam}{\big(}\bigcup_{k\leq j\leq\omega}Z_j{\big)}\leq \frac{5}{4}b_{k-1}$;
\item[(iv)] $\on{diam}(X_k)\leq\on{diam}(Y_k)\leq\on{dist}(X_k,Y_k)$.
\end{itemize}
\end{observation}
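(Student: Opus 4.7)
The plan is to verify each of the four inequalities by combining Definition \ref{183} applied to the underlying $(\Lambda,b,s)$-space $X$, the explicit description of the finite sets $Y_k$ from Definition \ref{1823}, and the good-pair inequalities \eqref{good}. The guiding observation is that each $Z_k=X_k\cup Y_k$ is a cluster of diameter controlled by $(4p_k-1)b_k$, which by \eqref{good} is minuscule compared to $b_{k-1}$, while $M_b<\tfrac{1}{25}$ forces the $b_k$ to decay geometrically; every estimate will then come out of a single triangle-inequality chain plus elementary arithmetic with these decay rates.

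I would first dispatch (iv), which is a direct computation: the explicit structure of $Y_k$ gives $\on{diam}(Y_k)=2(p_k-1)b_k$ and $\on{dist}(X_k,Y_k)=2p_k b_k$, while $\on{diam}(X_k)\leq b_k$ follows from Definition \ref{183}(iv) at $\eta=(k)$ together with the closure inclusion in (v) (also recorded in Lemma \ref{1822}). Since $p_k\geq 2$, the three quantities stand in the required order. Then (i) follows by triangulating between extremal points,
\[
\on{diam}(Z_k)\leq \on{diam}(X_k)+\on{dist}(X_k,Y_k)+\on{diam}(Y_k)\leq(4p_k-1)b_k<(1+4p_k)b_k,
\]
which is bounded by $\tfrac{1}{8}b_{k-1}$ by the first part of \eqref{good}.

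For (iii), the point is that the witness $x_\omega$ from Definition \ref{183}(v) collapses the $X$-part: since $X_\omega=\{x_\omega\}$ in an $s$-space, $\on{diam}\bigl(\bigcup_{k\leq j\leq\omega}X_j\bigr)\leq b_{k-1}$ by Definition \ref{183}(iv) together with the closure condition. Each $z\in Z_j$ with $j\geq k$ lies within $\on{diam}(Z_j)\leq \tfrac{1}{8}b_{j-1}\leq\tfrac{1}{8}b_{k-1}$ of some point of $X_j$ by (i), and two triangle-inequality hops (one per endpoint) yield
\[
\on{diam}\Bigl(\bigcup_{k\leq j\leq\omega}Z_j\Bigr)\leq b_{k-1}+2\cdot\tfrac{1}{8}b_{k-1}=\tfrac{5}{4}b_{k-1}.
\]

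The main obstacle is (ii), which demands the tightest estimate. For $z\in Z_k$ and $z'\in Z_{j'}$ with $k<j'\leq\omega$, choose companions $x\in X_k$, $x'\in X_{j'}$ with $d(z,x)\leq \tfrac{1}{8}b_{k-1}$ and $d(z',x')\leq \tfrac{1}{8}b_{j'-1}\leq \tfrac{1}{8}b_k$ by (i) (the boundary case $j'=\omega$ is covered by $x'=x_\omega$, since then $X_{j'}=\{x_\omega\}$). Definition \ref{183}(iii) bounds $d(x,x')\geq b_{k-1}-2b_k-b_{k+1}$, so the triangle inequality yields
\[
d(z,z')\geq \tfrac{7}{8}b_{k-1}-\tfrac{17}{8}b_k-b_{k+1}.
\]
Substituting $b_k<\tfrac{1}{25}b_{k-1}$ and $b_{k+1}<\tfrac{1}{625}b_{k-1}$ shows that the subtracted terms together amount to less than $\tfrac{1}{10}b_{k-1}$, which leaves a lower bound comfortably above $\tfrac{2}{3}b_{k-1}$.
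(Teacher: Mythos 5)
Your proof is correct and follows essentially the same route as the paper's: (iv) by the explicit distances in Definition \ref{1823} plus $\on{diam}(X_k)\leq b_k$, (i) by the three-term triangle chain bounded via the first inequality in (\ref{good}), and (ii)--(iii) by passing to companion points in the $X_j$'s and invoking Definition \ref{183}(iii)--(v); your closing arithmetic in (ii) ($\tfrac78 b_{k-1}-\tfrac{17}{8}b_k-b_{k+1}$ versus the paper's $\tfrac78 b_{k-1}-3b_k$) differs only cosmetically. No gaps.
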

\begin{proof}The properties follow from (\ref{good}), Definition \ref{183} and the fact that $b$~is good. (i) follows from the fact that for any $x,y\in Z_k$,
$$
d(x,y)\leq\on{diam}(X_k)+\on{dist}(X_k,Y_k)+\on{diam}(Y_k)\leq $$ $$\leq b_k+2b_kp_k+2b_k(p_k-1)\leq b_k(4p_k+1)\leq \frac{1}{8}b_{k-1}.
$$
To see (ii), note that for every $k<j\leq\omega$,
$$
\on{dist}(Z_k, Z_j)\geq \on{dist}(X_j,X_k)-\on{diam}(Z_j)-\on{diam}(Z_k)\geq $$ $$\geq  b_{k-1}-2b_k-b_{k+1}-\frac{1}{8}b_{j-1}-\frac{1}{8}b_{k-1}\geq \frac{7}{8}b_{k-1}-3b_k\geq \frac{2}{3}b_{k-1}.
$$
To see (iii), we calculate as follows:
$$\on{diam}{\Bigg(}\bigcup_{k\leq j\leq \omega}Z_k{\Bigg)}\leq b_{k-1}+2 \sup\{4b_jp_j:j\geq k\}\leq \frac{5}{4}b_{k-1}.$$
Point (iv) follows from definition.

\end{proof}

\begin{proposition}Let $(b,p)$ be a good pair, and $K$ be a compact, metrizable, $0$-dimensional space such~that $K\setminus K^{(\infty)}$ is infinite. There exists a proper tree $\Lambda$ such that for every $(\La,(b,p),s)$-space $Z=X\cup Y$, there exists a set $M\subset X$ such that
\begin{itemize}
\item[(i)] $M$ is the attractor of some Banach GIFS $\F$ of order $2$ with $\on{Lip}(\F)\leq \lambda_b$;
\item[(ii)] $M\cup Y$ is homeomorphic to $K$;
\item[(iii)] $M\cap X_k\neq\emptyset$ for $2\leq k\leq \omega$.
\end{itemize}
\end{proposition}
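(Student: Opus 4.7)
The plan is to find a convergent sequence of isolated points of $K$ to play the role of $Y\setminus\{x_\omega\}$, and to build $M\subset X$ homeomorphic to $K$ with this sequence removed, so that $x_\omega$ corresponds to the sequence's limit. I first verify that $K$ has infinitely many isolated points: the set $K\setminus K^{(\infty)}$ is open in $K$ (as $K^{(\infty)}$ is closed) and scattered as a subspace, so any point isolated in $K\setminus K^{(\infty)}$ has a neighborhood in $K$ disjoint from $K^{(\infty)}$ and is therefore isolated in $K$; since an infinite scattered space has infinitely many isolated points, $\on{Iso}(K)$ is infinite. By compactness I pick an accumulation point $p^*\in K$ of $\on{Iso}(K)$ and an injective sequence $(q_n)\subset\on{Iso}(K)$ with $q_n\to p^*$, arranged by splitting a cluster of isolated points around $p^*$ into two infinite subsequences (one taken as $(q_n)$) so that infinitely many isolated points of $K$ outside $(q_n)$ still converge to $p^*$. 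Then $K':=K\setminus\{q_n:n\in\N\}$ is a compact $0$-dimensional metrizable space in which $p^*$ is non-isolated, and $K$ is obtained topologically from $K'$ by attaching a convergent sequence of isolated points with limit $p^*$.

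Next I pick $\Lambda$ according to the topological type of $K'$: $\Lambda=\Lambda_{\max}$ if $K'$ is uncountable, and $\Lambda=\Lambda^{\alpha,n}$ for the appropriate $\alpha,n$ (as in Proposition~\ref{189}) if $K'$ is countable. For any $(\Lambda,(b,p),s)$-space $Z=X\cup Y$, I decompose $K'$ into disjoint clopen pieces: a ``far'' piece $P_0:=K'\setminus V_1$ and ``concentric'' pieces $P_k:=V_k\setminus V_{k+1}$ for $k\geq 1$, where $V_1\supset V_2\supset\cdots$ is a clopen neighborhood base of $p^*$ in $K'$ with $\bigcap_k V_k=\{p^*\}$ and $\on{diam}(V_k)\to 0$. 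Non-isolation of $p^*$ forces infinitely many $P_k$ to be nonempty; after relabeling I may assume every $P_k$ is nonempty. I then embed $P_0$ into $X_1$ as $M_0$ and each $P_k$ into $X_{k+1}$ as $M_k$, using Proposition~\ref{1813} when the piece is uncountable and Theorem~\ref{scattereddok1}(1) (or a direct scattered embedding) otherwise, with the identification $p^*\leftrightarrow x_\omega$. Setting $M:=\{x_\omega\}\cup\bigcup_{k\geq 0}M_k$ yields $M\cong K'$ and hence $M\cup Y\cong K$, proving (ii); property (iii) is immediate since $M_{k-1}\subset X_k$ is nonempty for every $k\geq 2$.

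For (i), each $M_k$ is itself a Banach GIFS attractor with Lipschitz constant at most $\lambda_{b^{k+1}}\leq\lambda_b$ inside the nested $(\Lambda,b^{k+1},s)$-subspace $X_{k+1}$ by Theorem~\ref{scattereddok1}. The $(\Lambda,b,s)$-structure of $X$ forces the diameters and mutual separations of the pieces $M_k\subset X_{k+1}$ to decay geometrically in agreement with the good sequence $b$, so Lemma~\ref{1814} applies with maps $h_k$ taking a fixed template subset of $M$ onto each $M_k$, producing a single generalized contraction $F:M\times M\to M$ with $\on{Lip}(F)\leq\tfrac{1}{2}\lambda_b$; together with a second map (built analogously) covering $M_0$, this gives a Banach GIFS $\F$ of order $2$ on $M$ with $\on{Lip}(\F)\leq\lambda_b$. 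The main obstacle is exactly this coherent assembly of infinitely many embedded pieces $M_k$ into a single order-$2$ GIFS with the prescribed Lipschitz bound, which is handled by Lemma~\ref{1814} via the good-sequence estimates built into $(\Lambda,b,s)$-spaces.
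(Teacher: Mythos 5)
Your reduction of (ii) to finding $M\cong K'$ with $p^*\mapsto x_\omega$, where $K'$ is $K$ minus a convergent sequence of isolated points, is sound and coincides with what the paper does in the case where $K^{(\infty)}$ is nonempty and not open in $K$. The gap is in (i). You assemble $M$ as a countable union $\{x_\omega\}\cup\bigcup_k M_k$ of independently embedded pieces and then assert that Lemma \ref{1814} ``applies with maps $h_k$ taking a fixed template subset of $M$ onto each $M_k$.'' The existence of such surjections with $\on{Lip}(h_k)\le\tfrac14\lambda_b$ is precisely the content that has to be proved: being a Banach GIFS attractor is not preserved under countable unions, and Lemma \ref{1814} only converts an already given family of uniformly Lipschitz surjections into one order-$2$ map. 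The paper manufactures these surjections from Lemma \ref{1816}, which needs the tree inclusion $\La'_{M'}\subset\La_M$ between target and template; that is exactly why the embeddings there are forced either to satisfy (\ref{1812}) (uncountable case) or to fill an entire $(\La^{\alpha,n},b,s)$-space (countable case), after which Theorem \ref{scattereddok1} is applied to $M$ as a whole rather than piece by piece. For your ad hoc pieces $M_k$ there is no reason the required tree inclusions hold, so the maps $h_k$ are not available and the ``coherent assembly'' you identify as the main obstacle is left unresolved.

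There is a second, more concrete failure point: $\Lambda$ must be fixed before the decomposition $P_k=V_k\setminus V_{k+1}$ is chosen, so each $X_{k+1}$ has a predetermined homeomorphism type. If $K$ is countable, say $K\cong\omega^\alpha\cdot n+1$ with $n\ge 3$, and you take $\La=\La^{\alpha,n}$, then $X_1$ is a $(\La^\alpha,b^1,s)$-space homeomorphic to $\omega^\alpha+1$, which has a single point of Cantor--Bendixson rank $\alpha$; but the ``far'' piece $P_0=K'\setminus V_1$ contains at least $n-1\ge 2$ points of rank $\alpha$ and therefore cannot embed into $X_1$ as a compact subspace. The paper avoids both problems by splitting into three cases according to whether $K^{(\infty)}$ is empty, nonempty and not open, or open, and in each case arranging for $M$ to be all of $X$, a single subset satisfying (\ref{1812}), or a union of just two such attractors glued by Lemma \ref{filipnowy9} --- never an infinite union of independently constructed pieces.
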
 
\begin{proof} We will consider three cases:\\
Case 1. $K^{(\infty)}=\emptyset$.\\
This means that $Z$ is countable and infinite. Then it is scattered so by the Mazurkiewicz--Sierpi\'nski theorem and Proposition~\ref{189}, there is a proper tree $\La:=\La^{\alpha,n}$ such that $K$ is homeomorphic to any $(\La,b,s)$-space. Hence let $X\cup Y$ be a $(\La,(b,p),s)$-space. By Theorem~\ref{scattereddok1}(1), $X$ is the~attractor of some Banach GIFS $\F$ of order $2$ with $\on{Lip}(\F)\leq \lambda_b$. By Observation \ref{obs2}(ii), each $Y_k$ is open, so we see that the first derivative $(X\cup Y)'=X'$. Using the Mazurkiewicz-Sierpi\'nski theorem, we see that $X\cup Y$ is homeomorphic with $K$.\\
Case 2. $K^{(\infty)}$ is nonempty and not open in $K$.\\
Then there is a sequence $(x_n)\subset K$ of isolated points which converges to some $y_0\in K^{(\infty)}$. Indeed, by our assumption, there is a sequence $(x_n')\subset K\setminus K^{(\infty)}$ and $y_0\in K^{(\infty)}$ such that $x_n'\to y_0$. However, for any $k\in\N$, there is a clopen set $V\ni x'_k$ which is disjoint with $K^{(\infty)}$ and has diameter $\leq \frac{1}{k}$. In particular, $V$ is countable compact metrizable space, hence scattered, and it must have an isolated point $x_k$, which is also isolated in $K$. Then $x_k\to y_0$. Now let $X\cup Y$ be a $(\La_{\max},(b,p),s)$-space. Since $K':=K\setminus \{x_k:k\in\N\}$ is compact,  uncountable $0$-dimensional and metrizable, by Proposition \ref{1813}, we can find $M\subset X$ so that (\ref{1812}) is satisfied and a homeomorphism $ h:K'\to M$ so that $h(y_0)=x_\omega$. Enumerate elements of $\bigcup_{k\in\N}Y_k$ by $\{y_k:k\in\N\}$, and extend $h$~by adjusting $h(x_k):=y_k$ for $k\in\N$. It is easy to see that $h:K\to M\cup Y$ is homeomorphism. Also, by Theorem \ref{scattereddok1}(2), $M$ is the attractor of some Banach GIFS of order $2$ with $\on{Lip}(\F)\leq\lambda_b$.\\
Case3. $K^{(\infty)}$ is nonempty and open in $K$.\\
Then $K':=K\setminus K^{(\infty)}$ is countable compact space, hence scattered, and we can find a proper tree $\Lambda'$ as in Case 1. Now let 
$$
\Lambda:=\{1\ha\eta:\eta\in\La_{\max}\}\cup\{i\ha\eta:2\leq i\leq \omega,\;(i-1)\ha\eta\in\La'\}.
$$
Clearly, $\La$ is a proper tree and (according to the notation from Lemma \ref{1822}), $\La_1[1]=\La_{\max}$ and $\La_2[2]=\La'$. Now let $X\cup Y$ be a $(\La,(b,p),s)$-space. Then by Lemma \ref{1822}, $X_1$ is $(\La_{\max},b^1,s)$-space and $\tilde{X}_1$ is $(\La',b^1,s)$-space. In the same way as in Case 1, we can show that $\tilde{X}_1\cup Y$ is homeomorphic to $\tilde{X}_1$ (and to $K'$).
 Also, as $K^{(\infty)}$ is a Cantor space, it is homeomorphic to a subset $N\subset X_1$ which satisfies (\ref{1812}) (see Proposition \ref{1813}), and then $K=K'\cup K^{(\infty)}$ is homeomorphic to $(\tilde{X}_1\cup Y)\cup N$ (as the underlying sets are clopen). Finally, as $\tilde{X}_1$ and $N$ are attractors of some GIFSs $\F_1,\F_2$ of order $2$ with $\on{Lip}(\F_i)\leq\lambda_b$ (by Theorem \ref{scattereddok1}), $M:=\tilde{X}_1\cup N$ is {the} attractor of a Banach GIFS $\F$ of order $2$ with $\on{Lip}(\F)\leq\lambda$ (see Lemma \ref{filipnowy9}). 

\end{proof}

\begin{theorem}\label{m-scattereddok}
Let $Z=X\cup Y$ be a $(\La,(b,p),s)$-space and $M\subset X$ be the attractor of  some Banach GIFS~$\F$ of~order $2$ with $\on{Lip}(\F)\leq\lambda_b$ and such that $X_k\cap M\neq\emptyset$ for $2\leq k\leq \omega$.\\
(1) If $m\geq 2$, $p_1\geq 2$, $p_{n+1}=p_n^m$ for $n\in\N$ and $b$ is such that $\lambda_b< \frac{1}{2}$, then
\begin{itemize}
\item[(1a)] $M\cup Y$ is the attractor of some Banach GIFS $\G$ of order $m$ with $\on{Lip}(\G)\leq 2 \lambda_b$.
\item[(1b)] $M\cup Y$ is not the attractor of any weak GIFS of order $m-1$.
\end{itemize}
(2) If $p_1\geq 2$ and $p_{n+1}:=p_{n}^{n}$ for $n\in\N$, then $M\cup Y$ is not the attractor of any weak GIFS.
\end{theorem}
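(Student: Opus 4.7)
For part (1a), the plan is to construct the Banach GIFS $\G$ of order $m$ as a union of three groups of maps. First, lift every map of $\F$ (a GIFS of order $2$ on $M$ with $\on{Lip}(\F) \leq \lambda_b$) to order $m$ via Lemma \ref{nl1}; these maps cover $M$. Second, include $p_1$ constant maps of order $m$ valued at the $p_1$ distinct points of $Y_1$, which covers $Y_1$. Third, construct a single map $g \colon (M \cup Y)^m \to M \cup Y$ whose image covers $\bigcup_{k \geq 2} Y_k$. The crucial observation is that the condition $p_{k+1} = p_k^m$ makes the base-$p_k$ encoding $\sigma(i_1, \ldots, i_m) := 1 + \sum_{l=1}^m (i_l - 1) p_k^{l-1}$ a bijection from $\{1, \ldots, p_k\}^m$ onto $\{1, \ldots, p_{k+1}\}$. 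Writing $Y_k = \{a^k_1, \ldots, a^k_{p_k}\}$, we define $g$ on $Y_k^m$ by $g(a^k_{i_1}, \ldots, a^k_{i_m}) := a^{k+1}_{\sigma(i_1, \ldots, i_m)}$, and extend $g$ to an arbitrary tuple $(y_1, \ldots, y_m) \in (M \cup Y)^m$ by setting $k^* := \min_i k_i$ (where $k_i$ is the unique level with $y_i \in Z_{k_i}$) and $g(y_1, \ldots, y_m) := a^{k^*+1}_{\sigma(j_1, \ldots, j_m)}$, with $j_i$ equal to the index of $y_i$ in $Y_{k^*}$ when $y_i \in Y_{k^*}$ and $j_i := 1$ otherwise.

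The Lipschitz bound on each $Y_k^m$ comes out to at most $\frac{2 b_{k+1} p_{k+1}}{b_k p_k}$, which by the good-pair condition (\ref{good}) does not exceed $2\lambda_b / p_k \leq \lambda_b$; for pairs of inputs straddling different levels or involving $M$-points, the separation and diameter estimates of Observation \ref{obs2} yield ratios bounded by small multiples of $\lambda_b$ (typically of order $\lambda_b^2$). A case analysis across all combinations of input levels then confirms $\on{Lip}(g) \leq 2\lambda_b$, and hence $\on{Lip}(\G) \leq 2 \lambda_b$; the attractor relation $M \cup Y = \bigcup_{f \in \G} f((M \cup Y)^m)$ follows from the construction.

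For parts (1b) and (2) we argue by contradiction. Suppose $\mH = \{h_1, \ldots, h_N\}$ is a weak GIFS of order $q$ with attractor $M \cup Y$, where $q = m-1$ in case (1b) and $q$ is arbitrary in case (2). For each large $k$, the set $Y_k \subset M \cup Y$ has $p_k$ points pairwise separated by $\geq 2 b_k$. Since $M \cup Y = \bigcup_i h_i((M \cup Y)^q)$, some $h_i$ maps at least $p_k / N$ distinct $q$-tuples of $(M \cup Y)^q$ onto distinct points of $Y_k$. By Edelstein-type contractivity together with compactness of $M \cup Y$, a uniform ``contraction modulus'' $\alpha_k > 0$ at scale $b_k$ can be extracted, and the preimage tuples of distinct $Y_k$-points must then differ in some coordinate by more than $2 b_k + \alpha_k$. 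A covering/combinatorial argument paralleling the one in \cite{S} bounds the number of such $q$-tuples by $C \cdot p_{k-1}^q$ plus a contribution from $M$ that stays bounded as $k \to \infty$ (since $M$ is totally bounded). Hence $p_k \leq N C p_{k-1}^q + O(1)$. In case (1b), $q = m-1$ while $p_k = p_{k-1}^m$, so $p_k / p_{k-1}^{m-1} = p_{k-1} \to \infty$, contradicting the bound. In case (2), $p_n = p_1^{(n-1)!}$ grows super-exponentially while $p_{n-1}^q = p_1^{(n-2)! \cdot q}$ grows only factorially-suppressed for every fixed $q$, again a contradiction.

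The main obstacle in part (1a) is the careful verification of the global Lipschitz bound for $g$, which requires a case analysis treating all mutual positions of input coordinates (same $Y_k$, different levels, coordinates in $X_k \cap M$, the distinguished point $x_\omega$, and mixtures thereof), systematically invoking the separation and diameter bounds of Observation \ref{obs2}. The main obstacle in parts (1b) and (2) is making rigorous the combinatorial counting argument for weak (non-Banach) contractions: Edelstein contractivity gives only strict inequality rather than a uniform Lipschitz constant, so a compactness extraction of uniform scale-dependent gaps $\alpha_k$ is needed, which must then be combined with a careful enumeration of the ``cells'' of $M \cup Y$ at each relevant scale to deduce the cardinality bound on preimage tuples.
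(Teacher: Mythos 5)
Your part (1a) is essentially the paper's construction: the paper also lifts $\F$ to order $m$, covers $Y_1$ by $p_1$ constant maps, and covers $\bigcup_{k\ge 2}Y_k$ by a single map that sends the products of levels with maximal index $k-1$ (which have cardinality $\ge p_{k-1}^m\ge p_k$) onto $Y_k$, gluing everything with the projections $\pi_1,\pi_2$ of Lipschitz constant $\le 2$ via Lemmas \ref{nl1} and \ref{filipnowy9}. One small slip: the minimal $d_m$-separation of distinct tuples in $Y_k^m$ is $2b_k$ (consecutive points of $Y_k$ are $2b_k$ apart), not $2b_kp_k$; the correct ratio is $p_{k+1}b_{k+1}/b_k\le\lambda_b$ by (\ref{good}), so your conclusion survives. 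This part is fine.

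Parts (1b) and (2) contain a genuine gap. Your plan is: preimage tuples of distinct points of $Y_k$ are $d_m$-separated by more than $2b_k$ (plus a compactness margin $\alpha_k$), and a covering argument then bounds their number by $C\,p_{k-1}^q+O(1)$. But any cover of $M\cup Y$ by sets of diameter $\le 2b_k$ must use roughly $p_k/2$ sets just to cover $Y_k$ itself, since $Y_k$ consists of $p_k$ collinear points spaced exactly $2b_k$ apart and has diameter $2(p_k-1)b_k$. Consequently the induced cover of $(M\cup Y)^q$ has at least $(p_k/2)^q\ge p_k/2$ cells, and the resulting bound on $|h_i((M\cup Y)^q)\cap Y_k|$ is of order $p_k$ or worse --- useless for the contradiction. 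This is precisely the point where the paper's proof of Lemma \ref{mdok1}(1) does something you do not: in the key estimate (\ref{nc66}) it keeps $Y_n$ as a \emph{single} cell of large diameter and proves separately (Case 2 of (\ref{nc66})) that a weak contraction on a product of cells, some of which equal $Y_n$, still takes at most one value in $Y_n$. That step is not a covering argument; it is a minimality argument that exploits the arithmetic-progression structure of $Y_n$: between any two points of $Y_n$ at distance $2jb_n$ with $j\ge 2$ one can interpolate a third point of $Y_n$ strictly closer to both, and pushing $j$ down to $1$ forces a contradiction with the separation estimates of Observation \ref{obs2}. Without this (or some substitute), your cardinality bound $p_k\le NCp_{k-1}^q+O(1)$ is unjustified. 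A secondary inaccuracy: the contribution of $M$ to the cell count does \emph{not} stay bounded as $k\to\infty$ merely because $M$ is totally bounded --- the number of pieces of $M$ of diameter $\le b_k$ grows with $k$ (the paper uses about $k2^k$ of them, via Observation \ref{obs3}); this is harmless only because $k2^k=o(p_{k-1})$ for the super-exponentially growing sequences $(p_k)$ at hand, which should be said explicitly. Finally, note that the uniform modulus $\alpha_k$ you extract by compactness is not actually needed: strict Edelstein contractivity already rules out two inputs at $d_m$-distance $\le 2b_k$ having images $2b_k$ apart, so the real obstacle is the one above, not the lack of a uniform Lipschitz constant.
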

Theorem \ref{m-scattereddok} will follow from the next lemma:
\begin{lemma}\label{mdok1}
Let $Z=X\cup Y$ be a $(\La,(b,p),s)$-space, $M\subset X$ be such that $M\cap X_k\neq\emptyset$ for $2\leq k\leq\omega$ and $m\in\N$.\\
(1) If $\lim_{n\to\infty}\frac{p_n^m}{p_{n+1}}=0$, then $M\cup Y$ is not the attractor of any weak GIFS of order $m$.\\
(2) If  $M$ is the attractor of  some Banach GIFS $\F$ of order $m$ with $\on{Lip}(\F)< \frac{1}{2}$ and for any $n\in\N$, $p_{n+1}\leq p_n^m$, and $b$ is such that $\lambda_b<\frac{1}{2}$, then $M\cup Y$ is the attractor of some Banach GIFS $\G$ of order $m$ such that $\on{Lip}(\G)\leq 2 \max\{\on{Lip}(\F),\lambda_b\}$.
\end{lemma}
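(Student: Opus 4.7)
The plan is to treat the two parts separately: part~(2) by an explicit construction of $\G$, part~(1) by a counting obstruction. Both parts hinge on the rigid linear structure of $Y_n$ from Definition~\ref{1823}: the $p_n$ equally spaced points at scale $b_n$.

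For part~(2), I would build $\G$ in two steps. First extend each $f \in \F$ from $M$ to all of $Z = M \cup Y$ by precomposing coordinatewise with a projection $r : Z \to M$ defined so that $r|_M = \on{id}_M$ and $r(y) := m_k$ for $y \in Y_k$, where $m_k \in M \cap X_k$ is a fixed representative (these exist by hypothesis for $k \geq 2$; for $k = 1$ pick any point of $M$). Case analysis using Observation~\ref{obs2} gives $\on{Lip}(r) \leq 2$, hence $\on{Lip}(\tilde f) \leq 2\on{Lip}(f)$ for $\tilde f(\mathbf{y}) := f(r(y_1),\ldots,r(y_m))$. Second, add one further generator $g : Z^m \to Z$ covering $Y \setminus \{x_\omega\}$: the assumption $p_{n+1} \leq p_n^m$ yields surjections $\phi_n : \{1,\ldots,p_n\}^m \to \{1,\ldots,p_{n+1}\}$ whose ``discrete Lipschitz'' constant is at most $2 p_{n+1}/p_n$ (via a truncated mixed-radix encoding). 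Set $g(a_{i_1}^n,\ldots,a_{i_m}^n) := a_{\phi_n(\mathbf{i})}^{n+1}$ on $Y_n^m$, extending to $Z^m$ by snapping each coordinate to $Y_n$ via $r$-type retractions, with cross-level inputs defaulting to a fixed value in the spirit of Lemma~\ref{1814}. The Lipschitz estimate for $g$, using $d(a_i^n, a_j^n) = 2|i-j|b_n$ and the spacing inside $Y_{n+1}$, reduces to $p_{n+1} b_{n+1}/b_n \leq \lambda_b$, which is precisely the second clause of~(\ref{good}). Hence $\G := \{\tilde f : f \in \F\} \cup \{g\}$ is a Banach GIFS of order $m$ with $\on{Lip}(\G) \leq 2 \max\{\on{Lip}(\F), \lambda_b\}$ and attractor $M \cup Y$.

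For part~(1), suppose toward contradiction that $\G = \{g_1,\ldots,g_l\}$ is a weak GIFS of order $m$ with attractor $M \cup Y$. The goal is a polynomial bound of the form $p_{n+1} \leq l(p_n + C)^m$ for a constant $C$ and all $n \geq N$, which contradicts $\lim_n p_n^m/p_{n+1} = 0$. The central ingredient is a locality claim: for all sufficiently large $n$ and any representation $g_i(y_1,\ldots,y_m) \in Y_{n+1}$ with $i \leq l$, every coordinate $y_j$ lies in the tail $(M \cup Y) \cap \bigcup_{k \geq n} Z_k$. Once this is established, Edelstein contractivity forces distinct input tuples producing distinct images in $Y_{n+1}$ to be $d_m$-separated by more than $2 b_{n+1}$; Observation~\ref{obs2}(iii) together with the rapid decay $b_{n+1}/b_n < 1/25$ bounds the $(2b_{n+1})$-packing of the tail by $p_n + C$, and the polynomial bound follows.

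The principal obstacle is the locality claim, since weak (Edelstein) contractions on compact metric spaces provide only strict, not uniform, distance inequalities. The plan is to run a finite-level compactness argument: for each $i \leq l$ and each fixed threshold $n_0$, consider the finitely many ``coarse cells'' $g_i\bigl(\prod_j ((M \cup Y) \cap \tilde Z_{k_j})\bigr)$, where $\tilde Z_k := Z_k$ for $k < n_0$ and $\tilde Z_{n_0} := \bigcup_{k \geq n_0} Z_k$ (so $\mathbf{k} \in \{1,\ldots,n_0\}^m$ indexes $n_0^m$ cells). On each such compact cell, Edelstein yields $\on{diam}(g_i(\text{cell})) < \on{diam}(\text{cell})$; taking the maximum over the finite list of cells produces a uniform ratio $C_1 < 1$ independent of $n$. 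Combined with the geometric decay of $b_n$, this forces, for $n$ sufficiently large relative to $n_0$, that no coarse cell having some $k_j < n_0$ has its image containing more than one point of $Y_{n+1}$; a finer accounting of these single-hit contributions across the finite cell list then yields the locality claim and completes the proof.
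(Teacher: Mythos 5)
Your part (2) is essentially the paper's construction: a retraction $r:M\cup Y\to M$ with $\on{Lip}(r)\le 2$ to extend $\F$, plus one map that uses the $m$-fold product of the $Y_k$'s to surject onto $Y_{k+1}$, glued by Lemma \ref{filipnowy9}. The paper takes the surjections on the larger domains $G_{k-1}$ (all tuples of maximal level $k-1$) and needs no structure on them at all: the estimate is just $\on{diam}(Y_k)\le 2p_kb_k\le\lambda_b\cdot 2b_{k-1}\le\lambda_b\cdot\on{dist}$ between distinct inputs, so your mixed-radix encoding is unnecessary. The one loose end is $Y_1$, which is reached neither by your $g$ nor by the extended $\tilde f$'s (whose images lie in $M$); the paper adds $p_1$ constant maps. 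That is cosmetic.

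Part (1) is where the proposal breaks, in two places. First, the packing bound is false: even granting locality, the tail $(M\cup Y)\cap\bigcup_{k\ge n}Z_k$ contains $Y_{n+1}$ itself, a $2b_{n+1}$-spaced set of cardinality $p_{n+1}$ (and also $Y_n$, and possibly a Cantor set $M\cap X_n$ with large packing at scale $2b_{n+1}$), so its $(2b_{n+1})$-packing number is of order $p_{n+1}$, not $p_n+C$; the inequality $p_{n+1}\le l(p_n+C)^m$ does not follow and the contradiction evaporates. Second, the compactness argument does not deliver the locality claim: a uniform ratio $C_1<1$ over finitely many coarse cells only gives $\on{diam}(g_i(\mathrm{cell}))\le C_1\on{diam}(\mathrm{cell})$, a fixed positive number independent of $n$, which cannot prevent that image from meeting $Y_{n+1}$ (whose diameter tends to $0$ while its cardinality explodes); in fact a weak contraction can send a fixed low-level cell onto a sequence converging to $x_\omega$ that meets every $Y_{n}$, so the locality claim fails as stated --- what survives, and is what the paper actually proves, is that each cell product contributes at most \emph{one} point to each $Y_n$. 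Concretely, the missing idea is the single-hit claim (\ref{nc66}): decompose $M\cup Y$ at scale $n$ into roughly $p_1+\dots+p_{n-1}+2^n$ cells of diameter $\le 2b_n$ (the $2^n$ coming from the dyadic subdivision of $M$ in Observation \ref{obs3}) plus the one exceptional cell $Y_n$, and show $|F(A_1\times\dots\times A_m)\cap Y_n|\le 1$ for every choice of cells. For small cells this is immediate from Edelstein contractivity and the $2b_n$-separation of $Y_n$; but when some $A_i=Y_n$ no separation or packing argument can work (adjacent points of $Y_n$ are at distance exactly $2b_n$), and the paper instead picks $x,y$ with distinct images in $Y_n$ minimizing $\max_i d(x_i,y_i)=2jb_n$, inserts an intermediate tuple $z$ using the arithmetic-progression structure of $Y_n$, and reaches a contradiction whether $F(z)$ lies in $Y_n$ (minimality of $j$) or outside it ($\on{dist}(Y_n,\,\cdot\,)\ge 2p_nb_n>d_m(z,x)$). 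Without this step, the bound $|F((M\cup Y)^m)\cap Y_n|\le(1+p_1+\dots+p_{n-1}+n2^n)^m=o(p_n)$, and hence the whole contradiction, is unavailable.
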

We show that Theorem \ref{m-scattereddok}(1) follows from the above Lemma. Observe that if $p_1\geq 2$ and $p_{n+1}=p_n^m$, then $\lim_{n\to\infty}\frac{p_n^{m-1}}{p_{n+1}}=\lim_{n\to\infty}\frac{1}{p_n}=0$. Hence (1b) follows from Lemma \ref{mdok1}(1). Clearly, (1a) follows from Lemma \ref{mdok1}(2). Finally, if $p_1\geq 2$ and $p_{n+1}=p_n^n$, then for any $m\in\N$, we have $\lim_{n\to\infty}\frac{p_n^m}{p_n^n}=\lim_{n\to\infty}\frac{1}{p_n^{n-m}}=0$, so (2) follows from Lemma \ref{mdok1}(1).\\
In the {remaining part} of this section we prove Lemma \ref{mdok1}.

For every $k\in\oN$, define $M_k=X_k\cap M$. If $M_1=\emptyset$, then define $\pi_1:M\cup Y\to M$ by
$$
\pi_1(x)=\left\{\begin{array}{ccl}x&\mbox{if}&x\in M,\\
z_{\max\{k,2\}}&\mbox{if}&x\in Y_k,\;k\in\N\end{array}\right.
$$
where $z_k$ is a fixed element of $M_k$.
We will show that $\on{Lip}(\pi_1)\leq 2$. Take $x,y\in M\cup Y$ and consider (most important) cases:\\
Case 1. $x\in M_k$ and $y\in Y_k$ for some $k\in\N$, $k\geq 2$. Then by Observation \ref{obs2}(iv),
$$d(\pi_1(x),\pi_1(x))\leq \on{diam}(X_k)\leq\on{dist}(X_k,Y_k)\leq \on{dist}(M_k,Y_k)\leq d(x,y).$$
Case 2. $x\in Z_k$ and $y\in Z_j$ for $k<j\leq\omega$. By Observation~\ref{obs2}(ii),(iii), we have
$$
d(\pi_1(y),\pi_1(x))\leq \on{diam}{\Bigg(}\bigcup_{k\leq j\leq\omega} Z_j{\Bigg)}\leq $$ $$\leq \frac{5}{4}b_{k-1}\leq 2\cdot\frac{2}{3}b_{k-1}\leq 2\on{dist}(Z_k,Z_j)\leq 2 d(x,y).
$$
Case 3. Case 1 and Case 2 do not hold. Then $d(\pi_1(x),\pi_1(y))\leq d(x,y)$ clearly holds.\\
Similarly we can define a projective map $\pi_2:M\cup Y\to Y$ with $\on{Lip}(\pi_2)\leq 2$.\\
If $M_1\neq\emptyset$ we can also define appropriate projections $\pi_1,\pi_2$.\\
In view of Lemmas \ref{nl1} and~\ref{filipnowy9}, to complete the proof of (2), it suffices to show that $Y$ is {the} attractor of some GIFS $\G$ of order $m$ with~$\on{Lip}(\G)\leq\lambda_b$.\\
For every $k\in \oN$, let 
$$G_k:=\bigcup\{Y_{i_1}\times{\dots}\times Y_{i_m}:\max\{i_1,{\dots},i_m\}=k\}.$$
Then $Y^m=\bigcup_{k\in\oN}G_k$ and for $k\in\N$, the cardinality $|G_k|\geq p_k^m$. Hence by our assumptions, for every $2\leq k\leq \omega$, there exists a surjection $h_k:G_{k-1}\to Y_{k}$ (we set $G_{\omega-1}:=G_\omega$). Finally if $x\in Y^m$, then we set 
$$F(x):=h_k(x) \;\;\mbox{if}\;\;x\in G_k.$$  
Clearly, $F:Y^m\to (Y\setminus Y_1)$ is surjection. Now we show that
\begin{equation}\label{fc4}
\on{Lip}(F)\leq \lambda_b.
\end{equation}
Take distinct $x=(x_1,{\dots},x_m),y=(y_1,{\dots},y_m)\in Y^m$ and consider cases:\\
Case 1. { For some $2 \leq k < \omega$ it holds $x,y\in G_{k-1}$}.\\
Then by Observation \ref{obs2}(ii),
$$
d_m(x,y)\geq \min\big\{\min\{\on{dist}(Y_i,Y_j):i<j\leq k-1\},\min\{2b_j:j\leq k-1\}\big\}\geq $$ $$\geq \min\left\{\frac{2}{3}b_{k-2},2b_{k-1}\right\}=2b_{k-1}
$$
and by (\ref{good})
$$
d(F(x),F(y))\leq 2b_k\leq\lambda_bp_k 2b_{k-1}\leq \lambda_bd_m(x,y).
$$
Case 2. { For some $2\leq k<j\leq\omega$, it holds} $x\in G_{k-1}$ and $y\in G_{j-1}$. Then there exists $i\in \{1,{\dots},m\}$ such that $y_i\in Y_{j-1}$. Then $x_i\in Y_n$ for some $n\leq k-1$, hence by (\ref{181a}), (\ref{good}) and Observation \ref{obs2}(ii),(iii),
$$
d(F(x),F(y))\leq\on{diam}(Y_k\cup Y_{j})\leq \frac{5}{4}b_{k-1}\leq\frac{5}{4}b_n\leq\lambda_b\frac{2}{3}b_{n-1}\leq $$ $$\leq \lambda_b\on{dist}(Y_n,Y_{j-1})\leq\lambda_bd(x_i,y_i)\leq\lambda_b d_m(x,y).
$$
Hence the proof of (\ref{fc4}) is finished.\\
Finally, let $G_1,{\dots},G_{p_1}:Y\to Y_1$ be constant maps so that $\bigcup_{i=1}^{p_1}G_i(Y)=Y_1$. Then $\on{Lip}(G_i)=0\leq\lambda_b$ and, in view of Lemma \ref{nl1}, $Y$ is the attractor of a GIFS of order $m$ with $\on{Lip}(\G)\leq\lambda_b$. The proof of Lemma~\ref{mdok1}(2) is finished.
\newline
Now we prove Lemma~\ref{mdok1}(1). It is enough to prove that for every generalized weak contraction $F:(M\cup Y)^m\to Z$,
\begin{equation}\label{nc5}
\lim_{n\to\infty}\frac{|F((M\cup Y)^m)\cap Y_n|}{p_n}=0.
\end{equation}
We start with the additional ''structural'' observation concerning $(\La,b,s)$-spaces.

\begin{observation}\label{obs3} Let $X$ be a $(\La,b,s)$-space.
For every $k\in\N$ and $i\in\N\cup\{0\}$, there exist $X_{k,i}^1,{\dots},X_{k,i}^{2^i}$ such that $X_k=\bigcup_{j=1}^{2^i}X_{k,i}^j$ and $\on{diam}(X_{k,i}^j)\leq b_{k+i}$.
\end{observation}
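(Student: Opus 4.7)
The plan is to deduce the statement from the following auxiliary claim, which we prove by induction on $i$:

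\emph{Claim.} For every $(\La',b',s)$-space $Y$ (with good sequence $b'$) and every $i\in\N\cup\{0\}$, there exist $Y^1,\dots,Y^{2^i}$ with $Y=\bigcup_{j=1}^{2^i}Y^j$ and $\on{diam}(Y^j)\leq b'_i$.

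Given the Claim, the observation follows immediately: by Lemma \ref{1822}(d) applied with $\eta=(k)$, the subspace $X_k$ is a $(\La_1[(k)],b^k,s)$-space, so the Claim applied to it produces a cover of $X_k$ by $2^i$ sets of diameter at most $(b^k)_i=b_{k+i}$.

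For the base case $i=0$ of the Claim we must show $\on{diam}(Y)\leq b'_0$. If $Y$ is a singleton this is trivial; otherwise $\emptyset\in\La'\setminus\tL'$, and condition (iv) of Definition \ref{183} applied at $\eta=\emptyset$ and $k=1$ gives $\on{diam}\bigl(\bigcup_{1\leq j<\omega}Y_{(j)}\bigr)\leq b'_0$. Condition (v), combined with (s) which makes $Y_{(\omega)}$ a singleton, places $Y_{(\omega)}$ in the closure of this union, so $Y\subset\overline{\bigcup_{1\leq j<\omega}Y_{(j)}}$ and this closure has the same diameter. For the inductive step $i\to i+1$, decompose $Y=Y_{(1)}\cup \tilde Y_{(2)}$, which is legitimate because $\La'$ is proper (so $(1),(2)\in\La'$ whenever $\emptyset\notin\tL'$, and otherwise $Y$ is a singleton and the statement is vacuous). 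By Lemma \ref{1822}(d), $Y_{(1)}$ is a $(\La_1[(1)],(b')^1,s)$-space, and by Lemma \ref{1822}(e), $\tilde Y_{(2)}$ is a $(\La_2[(2)],(b')^1,s)$-space. Applying the inductive hypothesis with parameter $i$ to each of these, we cover each by $2^i$ sets of diameter at most $((b')^1)_i=b'_{i+1}$; the union of these two covers is a cover of $Y$ by $2\cdot 2^i=2^{i+1}$ sets of diameter at most $b'_{i+1}$.

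There is no essential obstacle to this argument; the only care required is the index bookkeeping through the shift operator $b'\mapsto (b')^1$, which aligns the drop of the diameter bound from $b'_i$ to $b'_{i+1}$ with the doubling of the number of pieces, and the verification that the degenerate subcases (where $Y_{(1)}$ or $\tilde Y_{(2)}$ is a singleton, or where the tree is trivial) are automatically covered by the base step or are vacuous.
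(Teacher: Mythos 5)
Your proof is correct and follows essentially the same route as the paper's: the paper also splits $X_k$ (viewed as a $(\La',b^k,s)$-space) into the $(\La'_1[(k,1)],b^{k+1},s)$-space $X_{(k,1)}$ and the $(\La'_2[(k,2)],b^{k+1},s)$-space $\tilde X_{(k,2)}$ and iterates this halving $i$ times, invoking Lemma \ref{1822}(d),(e) exactly as you do. Your version merely makes the paper's ``proceeding in this way'' into an explicit induction on $i$ over all $(\La',b',s)$-spaces and records the degenerate singleton cases, which the paper handles with a parenthetical remark.
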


\begin{proof}By Lemma \ref{1822}{(d)} we see that $X_k$ is a $(\La',b^k,s)$-space of diameter $\leq b_k$, which can be divided into $(\La'_1[(k,1)],b^{k+1},s)$-space and $(\La'_2[(k,2)],b^{k+1},s)$-space of diameters $\leq b_{k+1}$. If both of these sets are not singletons, then we can divide each of them into next two, each of diameter $\leq b_{k+2}$. Proceeding in this way, for each $i$ we can divide $X_k$ into $2^i$ spaces, each of the diameter $\leq b_{k+i}$ (however, some of these sets can be empty, as during the procedure we can have singletons at some stages).
\end{proof}

Fix $n\in\N$. We will estimate $|F((M\cup Y)^m)\cap Y_n|$.\\
Let $A_1,{\dots},A_m$ be sets so that every $A_l$ is of one of the following forms (see Observation \ref{obs3} for the notation in the second option):
\begin{itemize}
\item[(a)] $A_l$ is a singleton;
\item[(b)] $A_l=X^{j}_{k,n-k}\cap M$ for some $1\leq k\leq n$ and $1\leq j\leq 2^{n-k}$;
\item[(c)] $A_l=\bigcup_{n+1\leq j\leq\omega}((X_j\cap M)\cup Y_j)$;
\item[(d)] $A_l=Y_n$.
\end{itemize}
We will observe that 
\begin{equation}\label{nc66}
\left|F\left(A_1\times{\dots}\times A_m\right)\cap Y_n\right|\leq 1
\end{equation}
Indeed, suppose that it is not true. Then we can find $x,y\in A_1\times{\dots}\times A_m$ such that $d(F(x),F(y))\geq 2b_n$. Now consider two cases:\\
Case 1. All $A_1,{\dots},A_m$ are of the form (a), (b) or (c). Since $F$ is a weak contraction and by earlier calculations (see Observation \ref{obs2}(vii)) we get a contradiction since:
$$2b_n\leq d(F(x),F(y)) < d_m(x,y) \leq \on{diam}(A_1\times{\dots}\times A_m)= $$ $$=\max\{\on{diam}(A_l):l=1,{\dots},m\}\leq \max\{b_n,2b_n\}= 2b_n.$$
Case 2. $A_{i_1}={\dots}=A_{i_k}=Y_n$ for some $i_1,{\dots},i_k$. Assume, without loss of generality, that $i_1=1,{\dots},i_k=k$. Then, setting $x=(x_1,{\dots},x_m),\;y=(y_1,{\dots},y_m)$, we can assume that 
$$
\max\{d(x_i,y_i):i=1,{\dots},k\}= j2b_n,\;\mbox{for some}\;\;2\leq j\leq p_n-1.
$$ 
Supposing otherwise we would obtain a contradiction with contractivity of $F$ similarly as in Case 1.\\
Moreover, we assume that $x,y$ are chosen so that $j$ is minimal in this sense, i.e., there are no 
$x',y'\in A_1\times{\dots}\times A_m$ with $F(x'),F(y')\in Y_n$, $F(x')\neq F(y')$ and $\max\{d(x'_i,y'_i):i=1,{\dots},k\} <j2b_n$. Now let $i=1,{\dots},k$. If $d(x_i,y_i)\leq 2b_n$, then set $z_i:=x_i$, and if $d(x_i,y_i)>2b_n$, then let $z_i$ be an element of $Y_n$ so that $d(x_i,z_i)<d(x_i,y_i)$ and $d(z_i,y_i)<d(x_i,y_i)$ (the existence of $z_i$ is guaranteed by Definition \ref{1823}(iia)). For $i=k+1,{\dots},m$ set $z_i:=x_i$. Then $z:=(z_1,{\dots},z_m)\in A_1\times{\dots}\times A_m$, $
\max\{d(x_i,z_i):i=1,{\dots},k\} <j2b_n
$ and $
\max\{d(y_i,z_i):i=1,{\dots},k\} <j2b_n
$. 
The following conditions can hold:\\
Case 2a. $F(z)\in Y_n$. Then by the minimality of $j$, $F(z)=F(x)$ and $F(z)=F(y)$ -- it is a contradiction with $F(x) \neq F(y)$.\\
Case 2b. $F(z)\in Z_i$ for some $i\neq n$, then by Observation \ref{obs2}(ii) and (\ref{good}), we arrive to a contradiction:
$$d(F(z),F(x))\geq \on{dist}(Z_n,Z_i)\geq \frac{1}{2}\min\{b_0,{\dots},b_{n-1}\}=$$ $$=\frac{1}{2}b_{n-1}\geq 2p_nb_n\geq 2jb_n> d_m(z,x).$$
Case 2c. $F(z)\in X_n$, then we get a contradiction since $$d(F(z),F(x))\geq\on{dist}(X_n,Y_n)= 2p_nb_n> d_m(z,x).$$

All in all, we proved (\ref{nc66}).

Notice that (see Observation \ref{obs3})
$$M\cup Y=$$ $$={\Bigg(}\bigcup_{1\leq i\leq n-1}Y_i{\Bigg)}\cup Y_n\cup {\Bigg(}\bigcup_{1\leq k\leq n}\bigcup_{1\leq j\leq 2^{n-k}}(X^{j}_{k,n-k}\cap M){\Bigg)}\cup{\Bigg(}\bigcup_{n+1\leq j\leq\omega}(X_j\cap M)\cup Y_j{\Bigg)}.
$$
Hence for $n\in\N$
$$
|F((M\cup Y)^m)\cap Y_n|\leq ((p_1+{\dots}+p_{n-1})+1+(1+2+{\dots}+2^{n-1})+1)^m\leq $$ $$\leq (1+p_1+{\dots}+p_{n-1}+n2^n)^m.
$$
In order to prove (\ref{nc5}), in view of the assumption $\lim_{n\to\infty}\frac{p_n^m}{p_{n+1}}=0$, it is enough to show that for some $M>0$ and all $n\in\N$, 
\begin{equation}\label{nc11}
\frac{1+p_1+{\dots}+p_n+(n+1)2^{n+1}}{p_n}<M.
\end{equation}
By our assumptions, there exists $n_0$ such that for $n\geq n_0$, $\frac{p_n}{p_{n+1}}\leq \frac{p^m_n}{p_{n+1}}\leq \frac{1}{4}$. Observe that 
\begin{equation}\label{nc10}
\frac{p_{n_0+1}+{\dots}+p_{n_0+k}}{p_{n_0+k}}\leq 2.
\end{equation}
Indeed, for $k=1$ it clearly holds, and if it holds for $k\geq 1$, then
$$
\frac{p_{n_0+1}+{\dots}+p_{n_0+k+1}}{p_{n_0+k+1}}=\frac{p_{n_0+1}+{\dots}+p_{n_0+k}}{p_{n_0+k}}\cdot\frac{p_{n_0+k}}{p_{n_0+k+1}}+1\leq 2\frac{1}{2}+1\leq 2.
$$
Similarly we can show that for some $N$ and every $n\in\N$, $\frac{(n+1)2^{n+1}}{p_n}\leq N$ (we can take $N=\frac{8}{p_1}$).
Hence for every $k\in\N$,
$$
\frac{1+p_1+{\dots}+p_{n_0+k}+(n_0+k+1)2^{n_0+k+1}}{p_{n_0+k}}\leq \frac{1+p_1+{\dots}+p_{n_0}}{p_{n_0}} + 2+N.
$$
We proved (\ref{nc11}), and proof of Lemma \ref{mdok1}(1) is finished.

\section{$(\La^\omega,b,Z)$-spaces as attractors of GIFSs}

The main result of this section is the following:

\begin{theorem}\label{X-scattered}
Let $\lambda\in(0,1)$ and $Z$ be a compact, connected metric space which is the attractor of some Banach GIFS $\F$ of order $2$ with $\on{Lip}(\F)\leq \frac{1}{3}\lambda$. 
Then there exists a compact metric space $X$ such that:
\begin{itemize}
\item[(a)] each connected component of $X$ is a {similarity} copy of $Z$,
\item[(b)] $X$ is the attractor of some Banach GIFS $\F$ of order $2$ with $\on{Lip}(\F)\leq \lambda$,
\item[(c)] $X$ is not a topological fractal;
\end{itemize}
Additionally, if $Z$ is a subset of a {normed} space $H$, then $X$ can be taken so that
\begin{itemize}
\item[(d)] $X\subset H$.
\end{itemize}
\end{theorem}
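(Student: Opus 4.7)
\emph{Setup and property (d).} Fix a good sequence $b$ with $\lambda_b \leq \lambda$ (possible because $\lambda_b = 25 M_b$ and $M_b$ can be made arbitrarily small). If $Z \subset H$ is a subset of a normed space, invoke Theorem~\ref{fvf3}(2) to obtain $X \subset H$ which is a $(\La^\omega, b, Z)$-space; otherwise invoke Theorem~\ref{fvf3}(3). This directly yields (d). Note that by Proposition~\ref{1}(ii) the boundary $\tL^\omega$ consists solely of finite sequences, so every $X_\eta$ with $\eta \in \tL^\omega$ is a genuine similitude copy of $Z$.

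\emph{Property (a).} Each $X_\eta$ ($\eta \in \tL^\omega$) is a similitude image of the connected space $Z$, hence connected. By Lemma~\ref{1822}(a), any two distinct $X_\eta, X_{\eta'}$ are separated by disjoint clopen sets $X_\xi, X_{\xi'}$, where $\xi, \xi'$ are the shortest distinguishing prefixes not ending in $\omega$. Thus the sets $X_\eta$, $\eta \in \tL^\omega$, are precisely the connected components of $X$.

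\emph{Property (b), the main obstacle.} The plan is to adapt the two-map argument of Theorem~\ref{scattereddok1}(1) -- originally devised for $(\La^{\alpha,1}, b, s)$-spaces -- to the present $(\La^\omega, b, Z)$-space. The needed adaptation of Lemma~\ref{1816} to the $(\La, b, Z)$-setting is routine: because all $X_\eta$ are similitude copies of the same $Z$, a tree-level retraction composed with the canonical similitudes yields, for each $k \in \N$, a surjection $h_k : X \to X_k$ with $\on{Lip}(h_k) \leq \tfrac{1}{4}\lambda_b$. Applying Lemma~\ref{1814} together with Remark~\ref{1814a} produces a generalized contraction $F : X \times X \to X$ with $F(X \times X) = \bigcup_{k \geq 2} X_k = X \setminus X_1$ and $\on{Lip}(F) \leq \tfrac{1}{2}\lambda_b \leq \lambda$. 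To cover the piece $X_1$: note that $X_1$ itself is a $(\La^{\omega_1}, b^1, Z)$-space with $\omega_1$ a finite ordinal, so a straightforward induction on the finite scattered height, combined with Lemma~\ref{filipnowy9} (whose separation hypothesis holds via Remark~\ref{nr1} and Definition~\ref{183}(iii)), reduces the task to producing a GIFS whose attractor is a single similitude copy of $Z$. For such a copy, transport $\F_Z = \{f_1, f_2\}$ along the similitude and precompose with a projection $\pi : X \to X_{(1,\omega)}$ whose Lipschitz constant is at most $3$ (Lemma~\ref{1822}(b)); the hypothesis $\on{Lip}(\F_Z) \leq \lambda/3$ then yields two generalized contractions with Lipschitz constants $\leq \lambda$. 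Collecting these maps furnishes the desired Banach GIFS of order $2$ with Lipschitz constant at most $\lambda$.

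\emph{Property (c).} Form the quotient $X/R_c$. By (a), it is obtained from $X$ by collapsing each $X_\eta$ ($\eta \in \tL^\omega$) to a single point. The quotient inherits the tree structure of $\La^\omega$ and is homeomorphic to a $(\La^\omega, b', s)$-space for some good $b'$; by Proposition~\ref{189} it is homeomorphic to $\omega^\omega + 1$, whose scattered height is the limit ordinal $\omega$. Theorem~\ref{nowak1} therefore shows that $X/R_c$ is not a topological fractal, and Theorem~\ref{quotient} applied contrapositively yields that $X$ is not a topological fractal either, giving (c).
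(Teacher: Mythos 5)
Your overall architecture coincides with the paper's: realize $X$ as a $(\La^\omega,b,Z)$-space via Theorem \ref{fvf3} (which gives (d)), identify the connected components with the sets $X_\eta$, $\eta\in\tL^\omega$ (which gives (a)), build the order-$2$ GIFS from contractive surjections $h_k:X\to X_k$ fed into Lemma \ref{1814}, and obtain (c) by passing to $X/R_c$, invoking Nowak's theorem and Theorem \ref{quotient}. Parts (a), (c), (d) are fine. The gap is the single sentence on which (b) rests: the claim that ``a tree-level retraction composed with the canonical similitudes'' routinely yields surjections $h_k:X\to X_k$ with $\on{Lip}(h_k)\leq\frac{1}{4}\lambda_b$. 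In the $(\La,b,s)$-setting of Lemma \ref{1816} every boundary set is a singleton, so only inter-component distances matter; in the $(\La,b,Z)$-setting the restriction of $h_k$ to each component $X_\xi$ must itself be a $\frac{1}{4}\lambda_b$-contraction, and a similitude of $X_\xi$ onto $X_{k\ha\beta}$ has ratio $\on{diam}(X_{k\ha\beta})/\on{diam}(X_\xi)$, which by (Z1)--(Z2) is of order $b_{l(k\ha\beta)}/b_{l(\xi)}$. The retraction of Lemma \ref{1816} gives no control on this quantity. Concretely, for $k=2$ the tree of $X_2$ is $\La^1$, which contains no sequence of length $\geq 2$ except those ending in $\omega$; the retraction therefore sends $\xi=(2,5)\in\tL^\omega$ to the node $(2)$, i.e.\ maps $X_{(2,5)}$ (diameter $b_7+b_8$) onto $X_{(2,2)}$ (diameter $b_4+b_5$) --- the ``canonical similitude'' is a large expansion, not a contraction. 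This is precisely the difficulty the paper's proof of Theorem \ref{Xdok}(1) spends most of its length on: using the recursive decompositions of $\tL^{k-1}$ and $\tL^\omega$ it defines an explicit assignment $R_k$ so that every component $X_{k\ha\beta}$ of $X_k$ is the similitude image of a source component $X_\xi$ with $l(k\ha\beta)\geq l(\xi)+1$ (hence ratio $\leq M_b$), while all remaining components --- exactly those that would have to expand --- are collapsed to the single points $x_{R_k(\xi)}$, and then verifies the global Lipschitz bound case by case. Without such an assignment your $h_k$ do not exist as claimed; this is the mathematical core of the theorem, not a routine adaptation.

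There is a second, smaller gap. Since $h_\omega=\lim_k h_k$ is the constant map $x_\omega$ (Remark \ref{1814a}), the map $F$ produced by Lemma \ref{1814} satisfies $F(X\times X)=\{x_\omega\}\cup\bigcup_{2\leq k<\omega}X_k$, not $X\setminus X_1$: the entire component $X_\omega$, a similarity copy of $Z$ of diameter $b_1$, is missed except for the point $x_\omega$. You cover $X_1$ by a transported copy of $\F$, but not $X_\omega$. The repair is the same device the paper uses: transport $\F$ to $X_\omega$ along the similitude from (Z3) and precompose with the projection $\pi_\omega$ fixing $X_\omega$ and sending everything else to $x_\omega$, which has $\on{Lip}(\pi_\omega)\leq 3$ by Lemma \ref{1822}(b); this is also where the hypothesis $\on{Lip}(\F)\leq\frac{1}{3}\lambda$ is consumed a second time.
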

Theorem \ref{X-scattered} will follow from its qualitative version Theorem \ref{Xdok} {together with Theorem \ref{fvf3}}

Recall that in Subsection~\ref{s31}, for each countable limit ordinal $\delta_0$ and $\alpha\leq\delta_0$, we defined a certain sequence $(\alpha_n)$. Consider the case $\delta_0=\omega$. Then we can assume that sequences $(\alpha_n)$ are of the following forms:
\begin{itemize}
\item[*] if $\alpha = k \in\N$, then $(\alpha_n) = (0,1,{\dots},k-1,k-1,k-1,{\dots})$ i.e., $k_n=\min\{n-1,k-1\}$;
\item[*] if $\alpha = \omega$, then $(\omega):=(\alpha_n)=(0,1,2,{\dots})$, i.e., $\omega_n=n-1$.
\end{itemize}
It is easy to see that (a) and (b) from Section \ref{s31} are satisfied.\\
Observe that if $X$ is a $(\La^\omega,b,Z)$ space and $Z$ is connected, then each connected component of $X$ is a {similarity} copy of $Z$. Hence Theorem \ref{X-scattered} follows from the following:
\begin{theorem}\label{Xdok}
In the above framework, let $X$ be a $(\La^\omega,b,Z)$-space, where $Z$ is connected.\\
(1) If $Z$ is the attractor of a Banach GIFS $\F$ of order $2$, then $X$ is the attractor of a Banach GIFS $\G$ of order $2$ with $\on{Lip}(\G)\leq\max\{3\on{Lip}(\F),\lambda_b\}$.\\
(2) $X$ is not a topological fractal.
\end{theorem}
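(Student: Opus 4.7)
I will treat part (2) first, as it is cleaner. Since $Z$ is connected and Definition \ref{183}(iii) combined with (\ref{181b}) guarantees positive separation of $X_{\eta\ha k}$ and $X_{\eta\ha j}$ for distinct $k,j$, a simple induction on the depth shows that the connected components of $X$ are exactly the leaves $\{X_\xi : \xi \in \tL^\omega\}$. Fix a $(\La^\omega, b, s)$-space $X^s$ (existing by Theorem \ref{fvf3}(1)) and define $\psi : X \to X^s$ by $\psi(x) := x^s_\xi$ for $x \in X_\xi$. The only nontrivial case of continuity is $x_n \to x$ with $x_n$ in leaves distinct from the leaf containing $x$: by the separation together with Definition \ref{183}(v), this forces $x = x_{\eta\ha\omega}$ for some $\eta$ and the indices $\xi_n$ descend through branches $k_n \to \omega$ at $\eta$, which matches the convergence $x^s_{\xi_n} \to x^s_{\eta\ha\omega}$ in $X^s$. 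Hence $\psi$ factors through a continuous bijection $\bar\psi : X/R_c \to X^s$, which is a homeomorphism by compactness. By Proposition \ref{189}, $X^s$ is homeomorphic to $\omega^\omega + 1$, a countable compact scattered space of limit height $\omega$, hence not a topological fractal by Theorem \ref{nowak1}(1); Theorem \ref{quotient} then forbids $X$ from being a topological fractal.

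For part (1), I will adapt Theorem \ref{scattereddok1}(1). The first step is to prove an analogue of Lemma \ref{1816} in the $(\La, b, Z)$-setting, producing for each $k \geq 2$ a surjective map $h_k : X \to X_k$ with $\on{Lip}(h_k) \leq \frac{1}{4}\lambda_b$; this is possible since $X_k$ is a $(\La^{k-1}, b^k, Z)$-space with $\La^{k-1} \subset \La^\omega$ (Proposition \ref{1}(iv)) and $\sup_n b_{n+k}/b_n \leq M_b$. The combinatorial tree-matching argument from Lemma \ref{1816} extends naturally, with similitudes between copies of $Z$ replacing the identification of singletons at the leaves. Then Lemma \ref{1814} combines $(h_k)_{k \geq 2}$ (its hypothesis (ii) being satisfied by Remark \ref{1814a}, since $h_k(X) \subset X_k$) into a single $F : X^2 \to X$ with $\on{Lip}(F) \leq \frac{1}{2}\lambda_b$ and image $\bigcup_{k \geq 2} X_k \cup \{x_\omega\}$.

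To cover the two remaining leaves $X_1$ and $X_\omega$, for each $f_i \in \F$ I set $\tilde f_i(x,y) := s_1(f_i(s_1^{-1}(r_1(x)), s_1^{-1}(r_1(y))))$ and $\tilde g_i(x,y) := s_\omega(f_i(s_\omega^{-1}(r_\omega(x)), s_\omega^{-1}(r_\omega(y))))$, where $s_1 : Z \to X_1$ and $s_\omega : Z \to X_\omega$ are the similitudes from (Z), and $r_1, r_\omega : X \to X_1, X_\omega$ are the retractions that are the identity on the target leaf and constant outside it. The separation $\on{diam}(X_1) \leq \on{dist}(X_1, X \setminus X_1)$ (easily checked from $M_b \leq 1/25$) yields $\on{Lip}(r_1) \leq 1$, while Lemma \ref{1822}(b) yields $\on{Lip}(r_\omega) \leq 3$, so $\on{Lip}(\tilde f_i) \leq \on{Lip}(\F)$ and $\on{Lip}(\tilde g_i) \leq 3\on{Lip}(\F)$. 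The family $\G := \{F, \tilde f_1, \ldots, \tilde f_n, \tilde g_1, \ldots, \tilde g_n\}$ is then a Banach GIFS of order $2$ whose images cover $X$ and which satisfies $\on{Lip}(\G) \leq \max\{3 \on{Lip}(\F), \lambda_b\}$. The main obstacle is the adaptation of Lemma \ref{1816}: one must decorate the bookkeeping map on trees with coherent similitudes on the leaves while keeping the resulting global map Lipschitz with a bound of the same order, which requires a careful case analysis of how inter-leaf and intra-leaf distances combine.
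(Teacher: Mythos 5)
Your part (2) is correct and is essentially the paper's own argument: the paper merely asserts that $X/R_c$ is scattered of height $\omega$, and your factorization of the component map through a $(\La^\omega,b,s)$-space together with Proposition \ref{189}, Theorem \ref{nowak1}(1) and Theorem \ref{quotient} fills in exactly the intended details. Your part (1) also has precisely the paper's architecture: contractive surjections $h_k\colon X\to X_k$ fed into Lemma \ref{1814} (with Remark \ref{1814a}) to produce $F$ with image $\{x_\omega\}\cup\bigcup_{2\le k<\omega}X_k$, plus copies of $\F$ transported to $X_1$ and $X_\omega$ by the similitudes from (Z) and precomposed with retractions of Lipschitz constant $\le 1$ and $\le 3$ respectively; your bookkeeping of the constants and of the covering is fine.

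The genuine gap is in the step you defer, which is where all the work of the paper's proof lies: the construction of the maps $h_k$. Decorating the map $R$ of Lemma \ref{1816} with onto-similitudes on \emph{all} leaves, as you describe, cannot work. In that lemma $R(\eta)=T([\eta]_j)$ depends only on the longest initial segment of $\eta$ lying in the smaller tree, so infinitely many leaves $X_\eta$ of $X$, of arbitrarily small diameter, get assigned the \emph{same} target leaf of $X_k$, which has a fixed positive diameter; an onto-similitude from a small copy of $Z$ to a larger one is an expansion, and the bound $\on{Lip}(h_k)\le\frac{1}{4}\lambda_b$ is lost. The paper's construction (the maps $R_k$ and $g_k$) resolves this by using onto-similitudes only on those leaves $\xi$ whose image has strictly larger level --- namely $\xi=i\ha\eta$ with $i\le k-1$, $\xi=i\ha(k-1)\ha\eta$ with $k\le i<\omega$, and $\xi=\omega$ --- and \emph{constant} maps on all remaining leaves. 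Surjectivity of $g_k$ onto $X_k$ then rests on the explicit recursive descriptions (A) and (B) of $\tL^{k-1}$ and $\tL^\omega$, which guarantee that every leaf of $X_k$ is nevertheless the similitude-image of a suitable larger leaf of $X$, while condition (Z3) provides the coherence at the points $x_\xi$ for $\xi$ ending in $\omega$ that is needed for the Lipschitz estimate across $X_{\beta\ha\omega}$ (via Lemma \ref{1822}(b)). Deciding which leaves to collapse and verifying that enough leaves survive to cover $X_k$ is the actual content of the proof; it does not ``extend naturally'' from Lemma \ref{1816} and is missing from your sketch.
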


In the {remaining part} of this section we prove Theorem \ref{Xdok}. 
Directly by definition of families $\Lambda^\alpha$ for $\alpha \leq \omega$ and the above assumption, we can see that for every $k\geq 2$ (recall Proposition \ref{1}),
\begin{itemize}
\item[(A)] $\tL^{k-1}=\{\omega\}\cup{\big(}\bigcup_{i\leq k-1}i\ha \tL^{i-1}{\big)}\cup{\big(}\bigcup_{\omega>i\geq k}i\ha \tL^{k-2}{\big)}$;
\item[(B)] $\tL^\omega=\{\omega\}\cup{\big(}\bigcup_{i\leq k-1}i\ha\tL^{i-1}{\big)}\cup{\big(}\bigcup_{\omega>i\geq k}i\ha(k-1)\ha \tL^{k-2}{\big)}\cup$\\  $\cup{\big(}\bigcup_{\omega>i\geq k}(\bigcup_{j\neq k-1}i\ha j\ha \tL^{(i-1)_j}\cup\{(i,\omega)\}){\big)}$.
\end{itemize}
Now define $R_k:\tL^\omega\to k\ha \tL^{k-1}$ by
$$
R_k(\xi):=\left\{\begin{array}{ccc}k\ha i\ha \eta&\mbox{if}&\xi=i\ha\eta,\;i\leq k-1,\\
k\ha i\ha \eta&\mbox{if}&\xi=i\ha(k-1)\ha \eta,\;k\leq i<\omega,\\
k\ha i\ha \omega&\mbox{if}&\xi=i\ha \eta,\;k\leq i<\omega,\;\eta_1\neq k-1,\\
k\ha \omega &\mbox{if}&\xi=\omega.
\end{array}\right.
$$ 
By (A) and (B) we see that the map $R_k$ is well defined and $R_k(\tL^\omega)=k\ha\tL^{k-1}$. Now define the map $g_k:X\to X_k$ in the following way: if $\xi\in \tL^\omega$ is of the form $\xi=i\ha\eta$ for $i\leq k-1$ or $\xi=i\ha(k-1)\ha\eta$ for $\omega>i\geq k$ or $\xi=\omega$, then let $g_{k}\vert_{X_\xi}$ be a {similarity} transformation of $X_\xi$ onto $X_{R_k(\xi)}$ so that, if $\xi$ ends with $\omega$, then $g_k(x_{\xi})=x_{R_k({\xi})}$ (condition (Z3) guarantees that we can choose such {similitude}). If $\xi\in\tL^\omega$ is of the form $\xi=i\ha \eta$ for $i\geq k$ and $\eta_1\neq k-1$, then let
 $g_k\vert_{X_\xi}$ be a~constant map from $X_\xi$ to $X_{R_k(\xi)}$ such that $g_k(X_\xi) = \{x_{R(\xi)}\}$.\\
Clearly, $g_k(X)=X_k$. We will show that $\on{Lip}(g_k)\leq\frac{1}{4}\lambda_b$.\\
If $\xi\in \tL^\omega$ is of the form $\xi=i\ha\eta$ for $i\leq k-1$ or $\xi=i\ha(k-1)\ha\eta$ for $i\geq k$ or $\xi=\omega$, then\\
-- if $\xi$ does not end with $\omega$, then
$$
\on{diam}(X_\xi)=b_{l(\xi)}+b_{l(\xi)+1}\;\;\mbox{and}\;\;\on{diam}(g_k(X_\xi))=$$ $$=b_{l(R_k(\xi))}+b_{l(R_k(\xi))+1}\leq b_{l(\xi)+1}+b_{l(\xi)+2}.
$$
-- if $\xi$ ends with $\omega$, then letting $\xi$ be such that $\xi=\tilde{\xi}\ha\omega$, we get
$$
\on{diam}(X_\xi)=b_{l(\tilde{\xi})+1},\;\mbox{and}\;\on{diam}(g_k(X_\xi))=b_{l(\tilde{R_k(\xi)})+1}\leq b_{l(\tilde{\xi})+2}.
$$
If $\xi\in\tL^\omega$ is of different form than those above, then $\on{diam}(g_k(X_\xi))=0$.\\
Hence $\on{Lip}(g_k\vert X_\xi)\leq M_b\leq\frac{1}{4}\lambda_b$ for all $\xi\in\tL^\omega$.\\
Now let $x,y\in X$ be such that for some $\beta\in\La^\omega$ and $1\leq p<q\leq\omega$, $x\in X_{\beta\ha p}$ and $y\in X_{\beta\ha q}$, but if $q=\omega$, then we assume $y=x_{\beta\ha\omega}$, and such that $g_k(x)\neq g_k(y)$. Then by (\ref{181b}),
$$
\frac{1}{20}\lambda_bd(x,y)\geq \frac{1}{20}\lambda_b\on{dist}(X_{\beta\ha p},X_{\beta\ha q})\geq$$ $$\geq \frac{1}{20}\lambda_b(b_{l(\beta)+p-1}-2b_{l(\beta)+p}-b_{l(\beta)+p+1})\geq b_{l(\beta)+p}.
$$
Now consider cases:\\
Case 1. $\beta=i\ha\eta$ for $i\leq k-1$. Then
$$
d(g_k(x),g_k(y))\leq\on{diam}{\Bigg(}\overline{\bigcup_{p\leq j< \omega}X_{k\ha i\ha\eta\ha j}}{\Bigg)}\leq b_{k+l(\beta)+p-1}\leq b_{l(\beta)+p}\leq\frac{1}{20}\lambda_bd(x,y).
$$ 
Case 2. $\beta=i\ha(k-1)\ha\eta$ for some $k \leq i < \omega$. Then
$$
d(g_k(x),g_k(y))\leq \on{diam}{\Bigg(}\overline{\bigcup_{p\leq j < \omega}X_{k\ha i\ha\eta\ha j}}{\Bigg)}\leq $$ $$\leq
b_{k+i+l(\eta)+p-1}=b_{l(\beta)+p}\leq \frac{1}{20}\lambda_bd(x,y).
$$ 
Case 3. $\beta=i$ for $k\leq i<\omega$ and $p=k-1$ or $q=k-1$. Then
$$
d(g_k(x),g_k(y))\leq \on{diam}{\Bigg(}\overline{\bigcup_{1\leq j<\omega}X_{(k,i,j)}}{\Bigg)}\leq b_{k+i} \leq b_{l(\beta)+p}\leq \frac{1}{20}\lambda_bd(x,y).
$$
Case 4. $\beta=\emptyset$. Then 
$$d(g_k(x),g_k(y))\leq\on{diam}{\Bigg(}\overline{\bigcup_{p\leq j< \omega}X_{(k,j)}}{\Bigg)}\leq  b_{k+p-1} \leq b_{l(\beta)+p}\leq \frac{1}{20}\lambda_bd(x,y).
$$
Finally, assume that $y\in X_{\beta\ha\omega}$. By Definition \ref{183}(v), Lemma \ref{1822}{(b)} and what we already proved, we have
$$
d(g_k(x),g_k(y))\leq d(g_k(x),g_k(x_{\beta\ha\omega}))+d(g_k(x_{\beta\ha\omega}),g_k(y))\leq $$ $$\leq  \frac{1}{20}\lambda_b d(x,x_{\beta\ha\omega})+\frac{1}{20}\lambda_b d(x_{\beta\ha\omega},y)
\leq \frac{2}{20}\lambda_bd(x,y)+\frac{3}{20}\lambda_bd(x,y) = \frac{1}{4}\lambda_bd(x,y).
$$
Now by Lemma \ref{1814} and Remark \ref{1814a}, there is a map $F:X\times X\to X$ such that $\on{Lip}(F)\leq \frac{1}{2}\lambda_b$ and $F(X\times X)=\{x_\omega\} \cup \bigcup_{1\leq k<\omega}g_k(X)= \{x_\omega\} \cup \bigcup_{2\leq k<\omega}X_k$.\\
Since $Z$ is the attractor of a Banach GIFS $\F$ of order $2$ and $X_1$ and $X_\omega$ are {similarity} copy of~$Z$, there are GIFSs $\F_1=\{f_1,{\dots},f_n\}$ and $\F_\omega=\{g_1,{\dots},g_n\}$ on $X_1$ and $X_\omega$, respectively, of order $2$, with $\on{Lip}(\F_1),\on{Lip}(\F_\omega)=\on{Lip}(\F)$. Observe that the maps $\pi_1:X\to X_1$ and $\pi_\omega:X\to X_\omega$ given by
$$
\pi_1(x):=\left\{\begin{array}{ccc}x&\mbox{if}&x\in X_1\\
x_{(1, \omega)}&\mbox{if}&x\notin X_1\end{array}\right. ;
\;\;\;\;\;\pi_\omega(x):=\left\{\begin{array}{ccc}x&\mbox{if}&x\in X_\omega\\x_\omega&\mbox{if}&x\notin X_\omega\end{array}\right. .
$$
It is routine to check that $\on{Lip}(\pi_1),\on{Lip}(\pi_\omega)\leq 3$ (we use Lemma \ref{1822}{(b)}). Finally, define $f'_k(x,y):=f_k(\pi_1(x),\pi_1(y))$ and $g'_k(x,y):=g_k(\pi_\omega(x),\pi_\omega(y))$ and observe that $\mH=\{f'_1,{\dots},f'_k,g'_1,{\dots},g'_k,F\}$ is a Banach GIFS with $\on{Lip}(\mH)\leq \max\{3\on{Lip}(\F),\frac{1}{2}\lambda_b\}$ and $X$ is its { attractor}. This ends the proof of (1).\\
Now we prove (2). It is easy to see that $X/R_c$ is a scattered space with the height $\omega$. By the result of~Nowak and~Theorem~\ref{quotient}, $X$ is not a topological fractal.

\begin{example}{
Theorem \ref{X-scattered} gives us a way of constructing many mutually nonhomeomorphic GIFSs fractals which are not topological fractals. Indeed, if $Z_1,Z_2$ are not homeomorphic, then~spaces which have all components homeomorphic with $Z_1$ and $Z_2$, respectively, are not homeomorphic.\\
For example, for $Z$ we can take any cube $I_n=[0,1]^n$. Thanks to Lemma~\ref{nl2}, we obtain in this way fractals of GIFSs defined on the whole Euclidean spaces $\R^n$.\\
{ A slight modification of the Hilbert cube leads to an example in  $\ell^\infty$ space. Indeed, let 
$$
Z:=[0,1]\times\left[0,{\tfrac{1}{4}}\right]\times\left[0,{\tfrac{1}{4^2}}\right]\times{\dots}
$$
Then it is easy to see that $Z$ is the attractor of the GIFS $\F=\{f_i:i=0,1,2,3\}$, where
$$
f_i((x_k),(y_k)):=\left({\tfrac{1}{4}}x_1+{\tfrac{i}{4}},{\tfrac{1}{4}}y_1,{\tfrac{1}{4}}y_2,{\dots}\right),
$$
and $\on{Lip}(\F)=\frac{1}{4}$. Hence by Theorem \ref{X-scattered} and Remark \ref{rem:ell}, there is $X\subset \ell_\infty$ whose all connected components are {similarity} copy of $Z$, which is not a topological fractal and which is {the attractor of} some GIFS $\G$ on $\ell^\infty$ with $\on{Lip}(\G)\leq \frac{3}{4}$.\\
Similarly, starting with
$$
Z:=[0,1]\times\left[0,{\tfrac{1}{8}}\right]\times\left[0,{\tfrac{1}{8^2}}\right]\times{\dots}
$$
and using Theorem \ref{X-scattered} and Lemma \ref{nl2}, we obtain appropriate example in the Hilbert space $\ell^2$.}
}\end{example}

D'Aniello in \cite{D} showed that for any $n\in\N$ and $0< s\leq n$, there is a set $A\subset \R^n$ whose Hausdorff dimension $\on{dim}_H(A)=s$ and which is not the fractal generated by any weak IFS on $\R^n$. However, the sets she constructed were certain Cantor sets, hence topological fractals.\\ 
Theorem \ref{X-scattered} implies the following:
\begin{corollary}
If $1\leq s\leq n$, then there is a set $A\subset \R^n$ such that $\on{dim}_H(A)=s$ and which is not a topological fractal but is {the attractor} of some Banach GIFS on $\R^n$ of order $2$.
\end{corollary}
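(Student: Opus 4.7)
The plan is to invoke Theorem~\ref{X-scattered} with a carefully chosen connected compact base set $Z \subset \R^n$ of prescribed Hausdorff dimension $s$ which is the attractor of a Banach IFS on $\R^n$ with Lipschitz constant below $\lambda/3$ for some $\lambda \in (0,1)$. By Lemma~\ref{nl1} such an IFS is equivalently a Banach GIFS of order $2$ with the same Lipschitz constant, so the hypotheses of Theorem~\ref{X-scattered}, together with its option (d) (since $H = \R^n$), are satisfied. The theorem then produces $X \subset \R^n$ that is a Banach GIFS attractor of order $2$, is not a topological fractal, and whose connected components are similarity copies of $Z$. To check that $\on{dim}_H(X) = s$: by Proposition~\ref{1}(ii) one has $\tL^\omega \subset \La^\omega$, so in any $(\La^\omega, b, Z)$-space one has $X = \bigcup_{\eta \in \La^\omega \cap \tL^\omega} X_\eta$, a countable union of similarity copies of $Z$; countable stability of Hausdorff dimension then gives $\on{dim}_H(X) = \on{dim}_H(Z) = s$, so $A := X$ is the desired set.

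To build $Z$: for $s = n$ take $Z = [0,1]^n$, realized as the attractor of the standard $k^n$-subcube IFS of ratio $1/k$ (with $k$ as large as we wish); for $s = 1$ take a line segment. For $1 < s < n$ use a Moran-type construction: for large integer $k$, partition $[0,1]^n$ into $k^n$ axis-parallel closed subcubes and select a connected chain of $N$ of them (a linear sequence of adjacent subcubes, with possible short lateral branches). Then take an IFS of similitudes with ratios $r_1,\dots,r_N \in (0, 1/k]$ satisfying the Moran equation $\sum_{i=1}^N r_i^s = 1$, positioned so that consecutive pieces in the chain share at least one point and such that the images have pairwise disjoint interiors. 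The open set condition then yields $\on{dim}_H(Z) = s$ by Hutchinson--Moran, Hata's connectedness criterion for self-similar sets forces the attractor to be connected, and taking $k$ sufficiently large drives $\max_i r_i < \lambda/3$.

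The main obstacle is the construction of $Z$ itself: simultaneously arranging exact Hausdorff dimension $s$ (forcing a Moran-type relation among the contractions), connectedness of the attractor (forcing adjacency in the geometric placement of the IFS images), and arbitrarily small Lipschitz constant (forcing the use of many small pieces). Once $Z$ is in hand, both the application of Theorem~\ref{X-scattered} and the dimension accounting for $X$ reduce to routine bookkeeping, since $X$ is manifestly a countable union of scaled copies of $Z$ plus (possibly) countably many lower-dimensional pieces arising from the tree structure.
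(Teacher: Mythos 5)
Your proposal is correct and takes essentially the same route as the paper, whose entire proof is the one line ``take as $Z$ a connected IFS fractal of Hausdorff dimension $s$'' followed by Theorem~\ref{X-scattered}; your Moran-chain construction of $Z$ and the countable-stability computation $\on{dim}_H(X)=\on{dim}_H(Z)$ (valid since $\tL^\omega\subset\La^\omega$ makes $X$ a countable union of similarity copies of $Z$) just make explicit what the paper leaves to the reader. The only detail you leave implicit (as does the paper) is that Theorem~\ref{X-scattered} produces a GIFS of maps $X^2\to X$, so to get a Banach GIFS on all of $\R^n$ one should choose $\lambda<1/\sqrt{2}$ and extend the maps via Lemma~\ref{nl2}.
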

\begin{proof}It is enough to~take as~$Z$ a connected IFS {fractal} of Hausdorff dimension equal to $s$.\end{proof}

\begin{remark}{
It is easy to see that justification of Theorem \ref{Xdok}(2) is more general -- if $X$ is~a~$(\La,b,Z)$-space and $Z$ is connected, then the quotient space $X/R_c$ is homeomorphic with $(\La,b,s)$-space. Hence for any connected space $Z$ and any countable $\alpha$ with limit height, the space $(\La,b,Z)$-space is not a topological fractal.}
\end{remark}

\section{Topological properties of class of GIFSs' { attractors}}
It is well known (see for example \cite{BM} and {\cite{DS2}}) that (in reasonable metric spaces $X$) the class of attractors of weak IFSs is meager in $\mK(X)$. In this section we extend this result in some sense - we show that the class of all attractors of \textbf{Banach} GIFSs on a Hilbert space $H$ is meager.\\
Given a Hilbert space $H$ and $m\in\N$, let
$$
\mathcal{A}^{m}(H):=\{K\in\mK(H):K\;\mbox{is the attractor of a Banach GIFS of order $m$}\},
$$
$$
\mathcal{A}_w^{m}(H):=\{K\in\mK(H):K\;\mbox{is the attractor of a weak GIFS of order $m$}\},
$$
where $\mK(H)$ is the family of all nonempty and compact subsets of $H$. We consider it as a metric space with the Hausdorff metric ${h}$.\\
Finally, define
$$
\mA(H):=\bigcup_{m\in\N}\mA^{m}(H).
$$
That is, $\mA^m(H)$ and $\mA^m_w(H)$ are classes of all attractors of Banach and weak GIFSs of order $m$, respectively, and $\mA(H)$ is the class of all { attractors of Banach GIFSs} .
\begin{theorem}In the above frame:\\
(1) The set $\mA(H)$ is meager $F_\sigma$ in $\mK(H)$ and, in particular, typical compact subset of $H$ is not {the attrator of any} Banach GIFS.\\
(2) For every $m\in\N$, the set $\mA^{m+1}(H)\setminus \mA_w^m(H)$ is dense in $\mK(H)$.
\end{theorem}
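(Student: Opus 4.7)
The plan is to reduce both parts to a combination of the separation lemmas (Lemma \ref{nl3} and Lemma \ref{filipnowy9}) with the ``bad'' sets already produced in Section~5.

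\textbf{Part (1): Meagerness.} First I would write
\[\mA(H)=\bigcup_{n,m,N,R\in\N}A_{n,m,N,R},\]
where $A_{n,m,N,R}$ consists of those $K\in\mK(H)$ with $K\subset\overline{B}(0,R)$ that are attractors of some Banach GIFS $\F$ of order $m$ with $|\F|\leq n$ and $\on{Lip}(\F)\leq 1-1/N$. Each $A_{n,m,N,R}$ is closed: if $K_j\to K$ with $K_j\in A_{n,m,N,R}$ via GIFSs $\F_j=\{f_{1,j},\dots,f_{n,j}\}$, then the Hausdorff-limit compactum $L:=K\cup\bigcup_j K_j$ hosts all relevant values. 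For a countable dense $D\subset K^m$, pick approximations $x_j\in K_j^m$ of each $x\in D$; a standard diagonal argument extracts a subsequence so that $f_{i,j}(x_j)$ converges in $L$ for every $i$ and every $x\in D$. The limits form $\lambda$-Lipschitz maps that extend by uniform continuity to $f_i\colon K^m\to K$, and the identity $K_j=\bigcup_if_{i,j}(K_j^m)$ passes to the Hausdorff limit (using pigeonhole on the indices $i$ and compactness of $L^m$), so $K\in A_{n,m,N,R}$.

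\textbf{Nowhere density.} This follows from Theorem \ref{m-scattered}(2): fix once and for all a compact set $Z\subset\R\subset H$ which is not the attractor of any weak GIFS. Given $K_0\in\mK(H)$ and $\epsilon>0$, choose a maximal $(\epsilon/4)$-separated (and therefore $(\epsilon/4)$-dense) subset $\{y_1,\dots,y_{N_0}\}\subset K_0$ and place translated rescaled isometric copies $Z_1,\dots,Z_{N_0}$ of $Z$ inside the balls $\overline{B}(y_i,\epsilon/16)$, so that $\on{diam}(Z_i)\leq\epsilon/8$ and hence $\on{dist}(Z_i,Z_j)\geq\epsilon/8\geq\on{diam}(Z_i)$. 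Setting $K:=\bigcup_iZ_i$, a direct estimate gives $h(K,K_0)<\epsilon$, and Lemma \ref{nl3} says that any weak-GIFS attractor structure on $K$ would descend to one on some $Z_i$, hence on $Z$, contradicting the choice of $Z$. Therefore $K\notin\mA(H)\supset A_{n,m,N,R}$, and $A_{n,m,N,R}$ is nowhere dense.

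\textbf{Part (2): Density of $\mA^{m+1}(H)\setminus\mA_w^m(H)$.} The same local construction works once the universal $Z$ is replaced by the order-sensitive set $Z_{m+1}\subset\R$ produced by Theorem \ref{m-scattered}(1) applied with $m+1$ in place of $m$: it is the attractor of some Banach GIFS $\F_{Z_{m+1}}$ of order $m+1$ with arbitrarily small Lipschitz constant, and is not the attractor of any weak GIFS of order $m$. Given $K_0$ and $\epsilon$, place well-separated small isometric copies of $Z_{m+1}$ near an $(\epsilon/4)$-net of $K_0$ exactly as above, tuning the common scale so that the diameter/separation ratio is smaller than $1/\on{Lip}(\F_{Z_{m+1}})$. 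Then Remark \ref{nr1} applies and Lemma \ref{filipnowy9} assembles a Banach GIFS of order $m+1$ and Lipschitz constant strictly less than $1$ whose attractor is the union, while the separation hypothesis of Lemma \ref{nl3} is simultaneously satisfied. The resulting $K$ is $\epsilon$-close to $K_0$, lies in $\mA^{m+1}(H)$, and is excluded from $\mA_w^m(H)$ by Lemma \ref{nl3}.

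The main technical obstacle is the closedness step in Part (1): extracting a limit GIFS from a Hausdorff-convergent sequence of attractors requires ambient compactness that is not automatic in an infinite-dimensional Hilbert space. The key observation that makes the diagonal argument work is that $L=K\cup\bigcup_jK_j$ is compact; one must also verify carefully that the Lipschitz bound survives passage to the limit in the $d_m$ metric and that the fixed-point equation is preserved. A secondary difficulty is the simultaneous tuning of the scale and the separation in both constructions so that Lemma \ref{filipnowy9} and Lemma \ref{nl3} apply to one and the same configuration.
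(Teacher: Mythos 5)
Your proposal is correct, and its overall architecture matches the paper's: Part (2) is essentially the paper's argument verbatim (replace a finite $\varepsilon$-net of $K_0$ by small, well-separated similarity copies of the set $Y\in\mA^{m+1}(H)\setminus\mA^m_w(H)$ from Theorem \ref{m-scattered}, then invoke Remark \ref{nr1}, Lemma \ref{filipnowy9} and Lemma \ref{nl3}), and Part (1) uses the same decomposition of $\mA(H)$ into countably many sets indexed by the order, the number of maps and a Lipschitz bound $1-1/N$. You diverge in two sub-steps of Part (1). For closedness, the paper extends each $f^k_i$ to $H^m$ via the Kirszbraun--Valentine theorem, composes with the nearest-point retraction onto the compact convex hull of $K\cup\bigcup_k K_k$, and applies Arzel\'a--Ascoli there; you instead run a diagonal argument over a countable dense subset of $K^m$ inside the compact set $L=K\cup\bigcup_j K_j$ and extend the limit maps by the uniform Lipschitz bound. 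Your route is more elementary and never uses the Hilbert structure, so it proves closedness of these classes in an arbitrary complete metric space, whereas the extension step is precisely where the paper's Hilbert hypothesis enters; the only points to spell out are that $f_i(x):=\lim_j f_{i,j}(x_j)$ is independent of the approximating sequence $x_j\in K_j^m$ (the uniform Lipschitz bound gives this) and the pigeonhole/compactness argument for the inclusion $K\subset\bigcup_i f_i(K^m)$, both of which you indicate. For the empty-interior step, the paper simply quotes Part (2) (the dense set $\mA^{m+1}(H)\setminus\mA^m_w(H)$ is disjoint from each $\mA^m_{n,\alpha}(H)$), while you give a direct construction from Theorem \ref{m-scattered}(2) and Lemma \ref{nl3} by planting separated copies of a set that is not the attractor of any weak GIFS; this is a little more work but yields the slightly stronger fact that compacta which are non-attractors of weak GIFSs of every order are dense. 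Both variants of each step are sound.
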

\begin{proof}
We first prove (2). Let $K\in \mK(H)$ and fix $\varepsilon>0$. Then choose a finite set $K'=\{x_1,{\dots},x_k\}$ so that ${h}(K,K')<\frac{\varepsilon}{2}$, where ${h}$ is the Hausdorff metric on $\mK(X)$. Identifying a fixed line in $H$ with~$\R$, by~Theorem~\ref{m-scattered}, we can find a set $Y\in \mA^{m+1}(H)\setminus \mA^m_w(H)$. Then we replace each point $x_i$ in $K'$ by~appropriately small copy $Y_i$ of $Y$ so that $\on{diam}(Y_i)<\frac{\varepsilon}{2}$ and $\on{dist}(Y_i,Y_j)\geq \on{diam}(Y_l)$ for $i\neq j$ and $l$. Then, denoting by $K'':=\bigcup_{i=1}^kY_i$, we have ${h}(K,K'')\leq {h}(K,K')+{h}(K',K'')<\varepsilon$, and by Lemmas~\ref{filipnowy9},~\ref{nl3} and~Remark \ref{nr1}, $K''\in\mA^{m+1}(H)\setminus \mA^m_w(H)$. This ends the proof of (2).\\
Now we prove (1). For every $n,m\in\N$ and $\alpha<1$, denote by
$
\mathcal{A}_{n,\alpha}^{m}(H)$ the family of all attractors of Banach GIFSs $\F$ of order $m$ consisting of $\leq n$ maps and such that $\on{Lip}(\F)\leq\alpha$.
Clearly, $\mA(H)=\bigcup_{n,m,i\in\N}\mA^m_{n,\frac{i-1}{i}}(H)$. By (2), sets $\mA^m_{n,\alpha}(H)$ have empty interior. Thus it is enough to prove that each $\mA^m_{n,\alpha}(X)$ is closed. We will mimic the proof of \cite[Proposition 3.6]{DS2}. Choose a sequence $(K_k)\subset \mA^m_{n,\alpha}(X)$ which converges to some compact set $K$. Calculating, if needed, the same functions more than once, we may assume that for every $k\in\N$, there are maps $f^k_1,{\dots},f^k_n:(K_k)^m\to K_k$ so that $\on{Lip}(f^k_i)\leq\alpha$ and $K_k=\bigcup_{i=1}^nf^n_i((K_k)^m)$.
\newline
Now let $C$ be the closed convex hull of the compact set $K\cup \bigcup_{k\in\N}K_k$. Then $C$ is compact and convex subset of a Hilbert space, so there exists a retraction $r:X\to C$ with $\on{Lip}(r)=1$ ($r(x)$ can be the nearest point to $x$ in $C$). With the use of the Kirszbraun--Valentine theorem, extend each $f_i^k$ to a map $\overline{f}^k_i:H^m\to H$, with the condition $\on{Lip}(\overline{f}^k_i)\leq \sqrt{m}$ (see the proof of Lemma \ref{nl2}). Finally, set $\tilde{f}^k_i:=(r\circ \overline{f}^k_i)\vert_{C^m}$. In particular, $\tilde{f}^k_i:C^m\to  C$. Now since $C$ is compact and $\on{Lip}(\tilde{f}^k_i)\leq \sqrt{m}$, for every~$i=1,{\dots},n$, the sequence $(\tilde{f}^k_i)_{k\in\N}$ satisfies the assumptions of the Arzel\'a--Ascoli theorem. Hence there is a subsequence $(k_j)$ (which can be appropriate for all $i$) and a map ${f}_i:C^m\to H$ such that $(\tilde{f}^{k_j}_i)$ converges uniformly to ${f}_i$. For simplicity of notation, we assume that $k_j=j$.\\
To end the proof, it is enough to show that $\on{Lip}(f_i\vert_K)\leq \alpha$ and $K=\bigcup_{i=1}^nf_i(K^m)$. To see the first assertion, take $x,y\in K^m$. As $(K_{j})^m\to K^m$, we can find sequences $(x_j),(y_j)$ such that $x_j,y_j\in (K_{j})^m$, $x_j\to x$ and~$y_j\to y$. As $f^{j}_i(x_j)=\tilde{f}^j_i(x_j)\to f_i(x)$ and $f_i^{j}(y_j)\to f_i(y)$, we have
$$
d(f_i(x),f_i(y))=\lim_{j\to\infty}d(f^{j}_i(x_j),f^{j}_i(y_j))\leq\lim_{j\to\infty}\alpha d_m(x_j,y_j)=\alpha d_m(x,y).
$$
Hence $\on{Lip}(f_i)\leq \alpha$. In a similar way we can show that $f^{j}_i((K_j)^m)\to f_i(K^m)$, so $$K_{j}=\bigcup_{i=1}^nf^{j}_i((K_j)^m)\to \bigcup_{i=1}^nf_i(K^m).$$
On the other hand, $K_{j}\to K$, and hence $\bigcup_{i=1}^nf_i(K^m)=K$. This ends the proof.
\end{proof}

In view of mentioned results from \cite{BM} and \cite{DS2}, it is natural to ask is the set of all attractors of weak GIFSs is meager in $\mK(X)$. We leave it as an open problem:
\begin{problem}
Let $\mA_w(X)$ be the family of all sets $K\in\mK(X)$ which are attractors of some (possibly weak) GIFSs. Is $\mA_w(X)$ meager in $\mK(X)$?
\end{problem}

\textbf{Acknowledgements}\\
We would like to thank an anonymous referee for pointing out the way to extend point (2) of Theorem \ref{fvf3} (and, in turn, Theorem \ref{X-scattered}) from Hilbert spaces to normed spaces.

\end{document}